\documentclass[a4paper,11pt]{amsart}
\usepackage[utf8]{inputenc}
\usepackage{amssymb,amsmath,amsthm, mathtools}
\usepackage[margin=1in]{geometry}
\usepackage{dsfont}
\newtheorem{lemma}{Lemma}
\newtheorem{theorem}{Theorem}
\title[On an anisotropic Serrin crierion]{On an anisotropic Serrin criterion for weak solutions of the Navier-Stokes equations.}
\author{Guillaume L\'evy $^1$}
\address{$^{1}$Laboratoire Jacques-Louis Lions, UMR 7598, Université Pierre
et Marie Curie, 75252 Paris Cedex 05, France.}
\email{$^{1}${levy@ljll.math.upmc.fr}}
\date{}
\begin{document}
\begin{abstract}
In this paper, we draw on the ideas of \cite{CheminZhang} to extend the standard Serrin criterion \cite{Serrin} to an anisotropic version thereof.
Because we work on weak solutions instead of strong ones, the functions involved have low regularity.
Our method summarizes in a joint use of a uniqueness lemma in low regularity and the existence of stronger solutions.
The uniqueness part uses duality in a way quite similar to the DiPerna-Lions theory, first developed in \cite{DiPerna-Lions}.
The existence part relies on $L^p$ energy estimates, whose proof may be found in \cite{CheminZhang}, along with an approximation procedure.
\end{abstract}
\maketitle
\section{Presentation of the problem}
The present paper deals with the regularity of the Leray solutions of the incompressible Navier-Stokes equations in dimension three in space.
We recall that these equations are
\begin{equation}
 \left \{
 \begin{array}{l c}
   \partial_t u + \nabla \cdot (u \otimes u) - \Delta u = - \nabla p, \: \: \: \: \: \: \: \: \: \: \: t \geq 0, \:  x \in \mathbb{X}^3, \\
   \text{div }u \equiv 0, \\
  u(0) = u_0.
 \end{array}
  \right.
  \label{RappelNavierStokes}
 \end{equation}
Here, $u = (u^1, u^2, u^3)$ stands for the velocity field of the fluid, $p$ is the pressure and we have set for simplicity the viscosity equal to $1$.
We use the letter $\mathbb{X}$ to denote $\mathbb{R}$ and $\mathbb{T}$ whenever the current claim or proposition applies to both of them.
Let us first recall the existence theorem proved by J. Leray in his celebrated paper \cite{Leray}.

\begin{theorem}[J. Leray, 1934]
 Let us assume that $u_0$ belongs to the energy space $L^2(\mathbb{X}^3)$. 
 Then there exists at least one vector field $u$ in the energy space $L^{\infty}(\mathbb{R}_+, L^2(\mathbb{X}^3)) \cap L^2(\mathbb{R}_+, H^1(\mathbb{X}^3))$ which solves the system (\ref{RappelNavierStokes}) in the weak sense.
 Moreover, the solution $u$ satisfies for all $t \geq 0$ the energy inequality
$$
  \frac 12 \|u(t)\|_{L^2(\mathbb{X}^3)}^2 + \int_0^t \|\nabla u(s)\|_{L^2(\mathbb{X}^3)}^2 ds \leq \frac 12 \|u_0\|_{L^2(\mathbb{X}^3)}^2.
$$
\end{theorem}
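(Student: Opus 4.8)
The plan is to construct the solution by a regularization-and-compactness argument, following Leray's original strategy. First I would introduce a family of approximate problems whose nonlinearity is tamed enough to guarantee global solvability, for instance by mollifying the transport velocity: replace the quadratic term $\nabla \cdot (u \otimes u)$ by $\nabla \cdot ((J_\epsilon u) \otimes u)$, where $J_\epsilon$ is a standard mollifier (a Friedrichs or Galerkin truncation in Fourier modes would serve equally well). For each fixed $\epsilon > 0$ this is a semilinear system whose nonlinearity is smoothing and locally Lipschitz on $L^2$, so a Picard fixed-point argument built on the heat semigroup, combined with the a priori bound below to rule out blow-up, yields a global solution $u_\epsilon$ in the energy class.

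The second step is to derive the energy estimate at the approximate level. Testing the regularized equation against $u_\epsilon$ and using the divergence-free condition, the crucial observation is that the advection term vanishes: since $\text{div}(J_\epsilon u_\epsilon) = 0$ one has $\int_{\mathbb{X}^3} ((J_\epsilon u_\epsilon) \cdot \nabla u_\epsilon) \cdot u_\epsilon \, dx = -\tfrac 12 \int_{\mathbb{X}^3} \text{div}(J_\epsilon u_\epsilon) \, |u_\epsilon|^2 \, dx = 0$, and the pressure term drops as well because $u_\epsilon$ is solenoidal. This produces the energy identity
$$\frac 12 \|u_\epsilon(t)\|_{L^2}^2 + \int_0^t \|\nabla u_\epsilon(s)\|_{L^2}^2 \, ds = \frac 12 \|u_0\|_{L^2}^2,$$
uniform in $\epsilon$, whence uniform bounds on $u_\epsilon$ in $L^{\infty}(\mathbb{R}_+, L^2)$ and on $\nabla u_\epsilon$ in $L^2(\mathbb{R}_+, L^2)$.

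The third step is the passage to the limit, and this is where I expect the main obstacle. Weak-$*$ compactness in $L^{\infty}(L^2)$ and weak compactness in $L^2(H^1)$ extract a subsequence converging to some $u$, but these topologies are too weak to control the nonlinear term. To upgrade to strong convergence I would read off from the equation a uniform bound on $\partial_t u_\epsilon$ in a negative-order space such as $L^{4/3}_{\mathrm{loc}}(\mathbb{R}_+, H^{-1})$ — the worst contribution being the nonlinearity, estimated by interpolating the energy bounds to put $u_\epsilon \otimes u_\epsilon$ in $L^{4/3}_t L^2$ — and then invoke an Aubin--Lions--Simon compactness lemma to obtain strong convergence of $u_\epsilon$ to $u$ in $L^2_{\mathrm{loc}}(\mathbb{R}_+ \times \mathbb{X}^3)$. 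This strong convergence lets the quadratic term pass to the limit, $(J_\epsilon u_\epsilon) \otimes u_\epsilon \to u \otimes u$ in $L^1_{\mathrm{loc}}$, so that $u$ solves the system in the weak sense; testing only against divergence-free fields sidesteps the pressure entirely, which may afterward be recovered via De Rham's theorem.

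Finally, the energy inequality follows from lower semicontinuity rather than an exact identity: since the $L^2$ norm and the $L^2(H^1)$ seminorm are weakly lower semicontinuous, passing to the $\liminf$ in the approximate energy identity turns the equality into the stated inequality, the loss being precisely the possible strict drop of the dissipation under weak limits. I emphasize that the whole argument hinges on the third step — weak convergence is genuinely insufficient for the quadratic nonlinearity, and everything rests on extracting enough compactness in time, through the control of $\partial_t u_\epsilon$ and Aubin--Lions, to justify taking limits in the product $u_\epsilon \otimes u_\epsilon$.
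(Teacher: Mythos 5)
Your proposal is correct: the paper does not prove this statement at all (it is recalled as background, Theorem 1, with a citation to Leray's 1934 paper), and what you have written is precisely the classical argument behind that citation --- Leray's own regularization of the transport velocity, the uniform energy identity, Aubin--Lions compactness in time to pass to the limit in the quadratic term, and weak lower semicontinuity to convert the approximate energy equality into the final inequality. The only points deserving care in a full write-up are that on $\mathbb{X}^3 = \mathbb{R}^3$ the embedding $H^1 \hookrightarrow L^2$ is not compact, so the Aubin--Lions step must be run on an exhaustion by bounded domains with a diagonal extraction (your use of $L^2_{\mathrm{loc}}$ implicitly acknowledges this), and that obtaining the inequality for \emph{all} $t \geq 0$, rather than almost every $t$, uses the weak $L^2$-continuity in time of the limit $u$.
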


Uniqueness of such solutions, however, remains an outstanding open problem to this day.
In his paper from 1961 \cite{Serrin}, J. Serrin proved that, if one assumes that there exists a weak solution which is mildly regular, then it is actually smooth in space and time.
More precisely, J. Serrin proved that if a weak solution $u$ belongs to $L^p(]T_1, T_2[, L^q(D))$ for $T_2 > T_1 > 0$ and some bounded domain $D \Subset \mathbb{X}$ with the restriction $\frac 2p + \frac 3q < 1$, then this weak solution is $\mathcal{C}^{\infty}$ on $]T_1, T_2[ \times D$.
Following his path, many other authors proved results in the same spirit, with different regularity assumptions and/or covering limit cases.
Let us cite for instance 
\cite{BeiraoDaVeiga}, \cite{CaffKohnNiren}, \cite{CheminZhang}, \cite{FabesJonesRiviere}, \cite{FabreLebeau}, \cite{Giga}, \cite{IskauSereginSverak}, \cite{Struwe}, \cite{vonWahl} 
and references therein.

In this paper, we prove two results of the type we mentioned above : the first one is stated in the torus, while the second one is in a spatial domain in the usual Euclidean space.
Thanks to the compactness of the torus, the first result is easier to prove than its local-in-space counterpart.
For this reason, we will use the torus case as a toy model, thus avoiding many technicalities and enlightening the overall strategy of the proof.

In the torus, the theorem writes as follows.

\begin{theorem}
 Let $u$ be a Leray solution of the Navier-Stokes equations set in $\mathbb{R}_+ \times \mathbb{T}^3$ 
$$
  \left\{ 
  \begin{array}{c c}
   \partial_t u + \nabla \cdot (u \otimes u) - \Delta u = - \nabla p \\
   u(0) = u_0
  \end{array}
  \right.
$$
with initial data $u_0$ in $L^2(\mathbb{T}^3)$ and assume that there exists a time interval $]T_1,T_2[$ such that its third component $u^3$ satisfies
$$
  u^3 \in L^2(]T_1,T_2[, W^{2,\frac 32}(\mathbb{T}^3)).
$$
Then $u$ is actually smooth in time and space on $]T_1,T_2[ \times \mathbb{T}^3$ and satisfies the Navier-Stokes equations in the classical strong sense.
\end{theorem}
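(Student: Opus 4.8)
The plan is to argue by weak--strong uniqueness: I would build, on a subinterval of $]T_1,T_2[$, a companion solution $v$ that is more regular than the Leray solution $u$, prove that $u$ and $v$ necessarily coincide, and thereby transfer the regularity of $v$ onto $u$. Since the energy inequality forces $u\in L^2(]T_1,T_2[,H^1(\mathbb{T}^3))$, I may pick a time $t_0\in]T_1,T_2[$ with $u(t_0)\in H^1(\mathbb{T}^3)$, and take $v$ to be the strong solution issued from $u(t_0)$ at time $t_0$. By the classical Fujita--Kato theory such a solution exists locally and, by parabolic smoothing, is $\mathcal{C}^\infty$ for positive times; I denote by $[t_0,T^\ast[$ its maximal interval of existence, the whole point being to prove $T^\ast\ge T_2$.

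The heart of the matter, and what I expect to be the main obstacle, is a uniqueness lemma valid in low regularity. Writing $w=u-v$ and using $\operatorname{div}u=\operatorname{div}w=0$, the difference solves the linear advection--diffusion--reaction system
$$\partial_t w-\Delta w+(u\cdot\nabla)w+(w\cdot\nabla)v=-\nabla\pi,\qquad \operatorname{div}w=0,\qquad w(t_0)=0.$$
The difficulty is that the advecting field $u$ has only Leray regularity, $u\in L^\infty(L^2)\cap L^2(H^1)$, so that a naive energy estimate does not close. Instead, in the spirit of \cite{DiPerna-Lions}, I would argue by duality: given arbitrary smooth divergence-free final data $\Phi$ at a time $T\in]t_0,T_2[$, I solve the backward adjoint system
$$-\partial_t\phi-\Delta\phi-(u\cdot\nabla)\phi+(\nabla v)^{T}\phi=-\nabla q,\qquad \operatorname{div}\phi=0,\qquad \phi(T)=\Phi,$$
and then compute $\tfrac{d}{dt}\langle w,\phi\rangle$, in which the advection, diffusion, reaction and pressure contributions cancel pairwise, leaving $\langle w(T),\Phi\rangle=\langle w(t_0),\phi(t_0)\rangle=0$. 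As $\Phi$ is arbitrary this gives $w(T)=0$, hence $u=v$. The delicate points are the solvability of the adjoint problem with the rough drift $u$ and the rigorous justification of each integration by parts; this is exactly where DiPerna--Lions renormalization and commutator estimates enter, and where the anisotropic control of the third component must be exploited to absorb the problematic terms.

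For the existence and, above all, the continuation of $v$ up to $T_2$, the anisotropic hypothesis enters through the $L^p$ energy estimates of \cite{CheminZhang}. The norm $L^2(]T_1,T_2[,W^{2,\frac32}(\mathbb{T}^3))$ is invariant under the natural scaling $u_\lambda(t,x)=\lambda u(\lambda^2 t,\lambda x)$ of the equations, so it is the critical one-component quantity controlled by that theory. Combining the uniqueness lemma above, which yields $v^3=u^3$ on $[t_0,\min(T^\ast,T_2)[$ and hence $v^3\in L^2(W^{2,\frac32})$, with the Chemin--Zhang a priori bound, I rule out any blow-up of the strong norm of $v$ at $T^\ast$; by maximality this forces $T^\ast\ge T_2$. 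Making the a priori estimate rigorous requires running it first on smooth approximate solutions (through a Friedrichs regularization or a Galerkin scheme) and then passing to the limit, which is the only genuinely technical ingredient on the existence side.

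Granting $T^\ast\ge T_2$, the solution $v$ is smooth on $]t_0,T_2[\times\mathbb{T}^3$ and coincides there with $u$ by the uniqueness lemma, so $u$ inherits this smoothness. Finally, since almost every $t_0\in]T_1,T_2[$ is an admissible starting time, letting $t_0$ decrease to $T_1$ propagates the regularity down to the left endpoint and yields smoothness of $u$ on all of $]T_1,T_2[\times\mathbb{T}^3$, where $u$ then solves the Navier--Stokes system in the classical sense.
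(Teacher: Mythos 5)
Your overall architecture (weak--strong uniqueness plus a continuation criterion for a companion strong solution $v$) is reasonable in spirit, but the step where you ``rule out any blow-up of the strong norm of $v$ at $T^\ast$'' by invoking the Chemin--Zhang bounds is a genuine gap, and it is precisely the gap this paper exists to fill. The blow-up criterion of \cite{CheminZhang} controls the maximal time through norms $L^p_t\dot H^{1/2+2/p}_x$ of the third component with $p \in ]4,6[$; your hypothesis gives $u^3 \in L^2_tW^{2,\frac 32}_x \hookrightarrow L^2_t H^{3/2}_x$, which has the right \emph{scaling} but only $L^2$ integrability in time. Since no uniform-in-time bound in any positive-regularity space is available, $L^2_t H^{3/2}$ neither embeds into nor interpolates to any $L^p_t \dot H^{1/2+2/p}$ with $p>2$ on a finite interval: you have more spatial regularity but strictly less time integrability than the criterion demands. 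So the continuation step cannot be obtained by citing \cite{CheminZhang} as a black box; making the $L^2$-in-time endpoint work is the actual content of the theorem, and your argument is circular exactly there. The paper never uses the Chemin--Zhang regularity criterion; it borrows only their $L^p$ energy estimate (Lemma \ref{EstimeeEnergieLp}) and runs a completely different bootstrap at the level of the vorticity: $\omega_3$ solves a \emph{linear} transport-diffusion equation with drift $u$, one constructs a more regular solution of that linear equation (regularization plus the $L^p$ energy estimate) and identifies it with $\omega_3$ through a linear uniqueness lemma in $L^2L^2$ (Lemma \ref{UniciteL2L2}); after div-curl decompositions the same existence-plus-uniqueness scheme lifts $\omega_3$, then $\omega_1,\omega_2$, to $L^{\infty}L^{\frac 32}\cap L^2W^{1,\frac 32}$, whence $\Omega \in L^4L^2$, $u \in L^4H^1$, and only at that final stage is the classical Serrin criterion applied.

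Your uniqueness step is also left unproven where it is hardest. The duality argument you sketch requires solving the backward adjoint problem with the drift $u$, which is merely $L^{\infty}L^2 \cap L^2H^1$, and then justifying pairings such as $\langle (u\cdot\nabla)\phi, w\rangle$; with $\phi$ obtainable only in the energy space, the triple product $\int |u|\,|w|\,|\nabla\phi|$ is not controlled (in space $u,w \in L^3$, $\nabla\phi\in L^2$ gives H\"older exponents summing to $\frac 13+\frac 13+\frac 12 > 1$), and you do not explain how the anisotropic hypothesis on $u^3$ would repair a term that involves all components of $u$. If you instead fell back on the classical Prodi--Serrin weak--strong uniqueness, you would need the energy inequality for $u$ restarted from the interior time $t_0$ (the strong energy inequality), which is not part of the paper's definition of a Leray solution. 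Note that the paper sidesteps both issues: its uniqueness lemmas concern only linear transport-diffusion equations with a fixed divergence-free $L^2H^1$ drift, where uniqueness in $L^2L^2$ holds with no energy inequality on $u$ at all.
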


In a subdomain of the whole space, we need to add a technical assumption on the initial data, namely that it belongs to some particular $L^p$ space with $p < 2$.
Notice that such an assumption is automatically satisfied in the torus, thank to its compactness.

\begin{theorem}
 Let $u$ be a Leray solution of the Navier-Stokes equations set in $\mathbb{R}_+ \times \mathbb{R}^3$ 
$$
  \left\{ 
  \begin{array}{c c}
   \partial_t u + \nabla \cdot (u \otimes u) - \Delta u = - \nabla p \\
   u(0) = u_0
  \end{array}
  \right.
$$
 with initial data $u_0$ in $L^2(\mathbb{R}^3) \cap L^{\frac 32}(\mathbb{R}^3)$ and assume that there exists a time interval $]T_1,T_2[$ and a spatial domain $D \Subset \mathbb{R}^3$ of compact closure such that its third component $u^3$ satisfies
$$
  u^3 \in L^2(]T_1,T_2[, W^{2,\frac 32}(D)).
$$
Then, on $]T_1,T_2[ \times D$, $u$ is actually smooth in time and space and satisfies the Navier-Stokes equations in the classical strong sense.
\end{theorem}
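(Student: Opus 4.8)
The plan is to upgrade the a priori low regularity of the Leray solution $u$ to full smoothness by comparing it, on the space-time region $]T_1,T_2[ \times D$, with a more regular solution constructed by hand, and then invoking a uniqueness principle valid in a low-regularity class. This splits the argument into three movements: an \emph{existence} step producing a stronger solution $v$ that shares the anisotropic control, a \emph{uniqueness} step forcing $u = v$ locally, and a \emph{bootstrap} step promoting $v$ --- hence $u$ --- to $\mathcal{C}^\infty$. The overall skeleton mirrors the toroidal statement, which serves as the toy model; the genuinely new difficulty in the whole-space version is the interplay between localization and the nonlocal pressure.

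For the existence step I would fix a time slice $t_0 \in ]T_1,T_2[$ at which $u(t_0)$ enjoys the generic regularity coming from $u \in L^2(\mathbb{R}_+, H^1(\mathbb{R}^3))$, so that $u(t_0) \in H^1$ for almost every $t_0$, and solve the system forward from $t_0$ in a space carrying $L^p$-type bounds. The heart of this step is the $L^p$ energy estimate of \cite{CheminZhang}: testing the momentum equation against $|v|^{p-2} v$ and using incompressibility, the leading isotropic convective contribution cancels, and, after exploiting the relation $\partial_1 u^1 + \partial_2 u^2 = -\partial_3 u^3$ to trade horizontal derivatives of the horizontal components for $\partial_3 u^3$, what survives is controlled \emph{only} through the third component. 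Quantitatively one aims at a differential inequality of the form
$$
  \frac{d}{dt}\,\|v\|_{L^p}^p + c\int |v|^{p-2}\,|\nabla v|^2\,dx \leq f(t)\,\|v\|_{L^p}^p,
$$
with $f$ built from $\|u^3\|_{W^{2,\frac32}}$, hence in $L^1(]T_1,T_2[)$ by the hypothesis $u^3 \in L^2(]T_1,T_2[,W^{2,\frac32}(D))$; Gr\"onwall's lemma then closes the estimate and yields a solution $v$ with subcritical integrability. An approximation procedure --- regularizing the data and the equation, then passing to the limit using the uniform $L^p$ bounds --- makes this rigorous.

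The delicate point, and the reason the $\mathbb{R}^3$ statement is harder than its toroidal counterpart, is \emph{locality combined with the nonlocal pressure}. Restricting to $D$ forces the use of a spatial cutoff $\chi \equiv 1$ near $\overline D$; since $\chi v$ no longer has zero divergence, the convective cancellation is broken and one generates commutator terms supported in $\operatorname{supp}\nabla\chi$, together with a pressure contribution $\chi\,\nabla p$ that cannot be localized because $p = \sum_{i,j} R_i R_j (u^i u^j)$ is nonlocal. This is exactly where the extra hypothesis $u_0 \in L^{\frac32}(\mathbb{R}^3)$ is spent: through the $L^2 \cap L^{\frac32}$ control of $u$ and the $L^{\frac32}$-boundedness of the Riesz transforms it gives enough global integrability of $u \otimes u$ to bound the pressure and the far-field contributions uniformly, so that the localized $L^p$ estimate still closes. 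I expect this pressure/cutoff interplay to be the main obstacle of the whole proof.

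Finally, the uniqueness step compares $v$ with the given Leray solution $u$. Because both objects live in a low-regularity class, classical weak-strong uniqueness does not apply directly; instead I would run a duality argument in the spirit of the DiPerna-Lions theory, writing the equation satisfied by the difference $w = u - v$ as a linear transport-diffusion problem with $u$ and $v$ as rough coefficients and pairing it against the solution of a smooth backward dual problem, so that all derivatives fall on the regular dual function. This forces $w \equiv 0$, i.e. $u = v$ on $]T_1,T_2[ \times D$. Since $v$ then belongs to a subcritical Serrin class there, the classical criterion recalled in the introduction upgrades $v$, and therefore $u$, to a field that is $\mathcal{C}^\infty$ in time and space on $]T_1,T_2[ \times D$, completing the argument.
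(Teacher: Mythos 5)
Your proposal has a genuine gap: the existence step is circular. You propose to construct a strong Navier--Stokes solution $v$ from the data $u(t_0)\in H^1$ and to close an $L^p$ energy estimate for $v$ with a Gr\"onwall function ``built from $\|u^3\|_{W^{2,\frac 32}}$''. But $v$ obeys its own nonlinear equation, so any such differential inequality involves $\|v^3\|_{W^{2,\frac 32}}$, and the identity $v^3=u^3$ becomes available only \emph{after} the uniqueness step, which itself presupposes that $v$ exists on all of $]T_1,T_2[$. Even granting the identification on the lifespan of $v$, continuing $v$ past a hypothetical blow-up time inside $]T_1,T_2[$ would require a blow-up criterion in terms of one component controlled only on the subdomain $D$ --- which is precisely the theorem being proved; the Chemin--Zhang estimate you invoke is global in space and cannot absorb a hypothesis localized to $D$. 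There is also a technical misfire: the $L^p$ energy estimate of \cite{CheminZhang} (Lemma \ref{EstimeeEnergieLp} here) is for pressure-free transport-diffusion equations, and testing the momentum equation against $|v|^{p-2}v$ does \emph{not} cancel the pressure term when $p\neq 2$. The paper breaks the circle by never constructing a strong Navier--Stokes solution at all: it works on the vorticity, whose components solve \emph{linear} transport-diffusion equations (\ref{Equation_omega}), (\ref{Systeme_omega}) with the given Leray field $u$ as a fixed drift. For these linear problems one can both build more regular solutions (mollification plus Lemma \ref{EstimeeEnergieLp}) and prove uniqueness in $L^2L^2$ (Lemma \ref{UniciteL2L2}); hence the truncated vorticity $\chi\omega_3$ itself is bootstrapped to $L^{\infty}L^{\frac 32}\cap L^2W^{1,\frac 32}$ via the div-curl decomposition and its anisotropic operators (Lemmas \ref{GainRegulariteLp}, \ref{GainRegulariteAnisotropeBis}), then $\omega_1,\omega_2$ are treated likewise (Lemma \ref{GainRegulariteSystemeBis}), and the classical Serrin criterion enters only at the very end, once $\Omega\in L^4L^2$.

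Your diagnosis of the main obstacle, and of the role of $u_0\in L^{\frac 32}$, is also off target. In the vorticity formulation the pressure never appears, so there is no term $\chi\nabla p$ to estimate; in any case the pressure is bounded in $L^2L^{\frac 32}$ by Riesz transforms from the plain energy class alone, without the extra hypothesis on $u_0$. The genuine cost of the spatial cutoff is elsewhere: the Biot--Savart operators $A^k_{i,j}$ are nonlocal, so they do not commute with $\chi$, and the paper must estimate commutators such as $[\chi,\partial_i\Delta_{(1,2)}^{-1}]$ by a Calder\'on-type kernel argument combined with the Hardy--Littlewood--Sobolev inequality (Lemma \ref{Commutateur}). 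The assumption $u_0\in L^{\frac 32}\cap L^2$ is spent exactly there: through Lemma \ref{IntegrabiliteSolutionLeray} it yields $u\in L^{\infty}L^{\frac 32}\cap L^2W^{1,\frac 32}$ globally, which is what bounds the commutator remainders $\mathcal{R}(\mathcal{A})$ and $\mathcal{R}(\mathcal{B})$ in the scaling-invariant space $L^1L^{\frac 32}$ --- not the pressure. Finally, your duality step at the velocity level would additionally require divergence-free test fields and the (strong) energy inequality for $u$, neither of which you address; the paper's uniqueness lemma is applied instead to scalar, pressure-free equations where these issues do not arise.
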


Compared to the classical case, our result may seem weaker, as we require two space derivatives in $L^{\frac 32}$. 
However, the space in which we assume to have $u^3$ is actually at the same scaling that $L^2(]T_1,T_2[, L^{\infty}(D))$ or $L^2(]T_1,T_2[, BMO(D))$, which are more classically found in regularity theorems such as the one of J. Serrin.
In the scaling sense, our assumption is as strong as the usual Serrin criterion.
We demand a bit more in terms of spatial regularity because of the anisotropic nature of the criterion.

\section{Overview of the proof}

Our strategy draws its inspiration from the anisotropic rewriting of the Navier-Stokes system done in \cite{CheminZhang}, though it also bears resemblance to the work of 
\cite{AmbrosioBV}, \cite{AmbrosioCrippa}, \cite{Depauw}, \cite{LeBrisLions}, \cite{Lerner}.
Letting 
$$\Omega := \text{rot }u = (\omega_1,\omega_2,\omega_3), \: \: \: \omega := \omega_3,$$ 
we notice that $\omega$ solves a transport-diffusion equation with $\Omega \cdot \nabla u^3$ as a forcing term.
This equation writes
\begin{equation}
 \left \{
 \begin{array}{c c}
   \partial_t \omega + \nabla \cdot (\omega u) - \Delta \omega = \Omega \cdot \nabla u^3 \\
  \omega(0) = \omega_0,
 \end{array}
  \right.
  \label{Equation_omega}
 \end{equation}
for some $\omega_0$ which we do not specify.
Actually, because we will assume more regularity on $u^3$ than given by the J.Leray theorem on a time interval which does not contain $0$ in its closure, we will focus our attention on a truncated version of $\omega$, for which the initial data is equal to $0$.
For the clarity of the discussion to follow, we drop any mention of the cut-off terms in this section.
In the same vein, we will act as if Lebesgue spaces on $\mathbb{R}^3$ were ordered, which is of course only true on compact subdomains of $\mathbb{R}^3$. 

Viewing Equation (\ref{Equation_omega}) as some abstract PDE problem, we are able to show, by a classical approximation procedure, 
the existence of \textit{some} solution, call it $\tilde{\omega}$, which belongs to what we shall call the energy space associated to $L^{\frac 65}(\mathbb{X}^3)$, namely 
$$
L^{\infty}(\mathbb{R}_+, L^{\frac 65}(\mathbb{X}^3)) \cap L^2(\mathbb{R}_+, \dot{W}^{1,\frac 65}(\mathbb{X}^3)).
$$
Thanks to Sobolev embeddings, we have $L^{\frac 65}(\mathbb{X}^3) \hookrightarrow \dot{H}^{-1}(\mathbb{X}^3)$ and $\dot{W}^{1,\frac 65}(\mathbb{X}^3) \hookrightarrow L^2(\mathbb{X}^3)$.
In particular, this energy space is a subspace of $L^2(\mathbb{R}_+ \times \mathbb{X}^3)$.
We then conclude than $\tilde{\omega}$ is actually equal to $\omega$ thanks to a uniqueness result in $L^2(\mathbb{R}_+ \times \mathbb{X}^3)$ for Equation (\ref{Equation_omega}).
In particular, our $\omega$ has now an improved regularity, a fact which we will prove useful in the sequel.

At this stage, two things are to be emphasized. 
The first one is that the uniqueness result comes alone, without any existential counterpart.
To put it plainly, we are \emph{not} able to prove existence of solutions in the class where we are seeking uniqueness, contrary to, for instance, the now classical results from DiPerna-Lions \emph{et al}.
The existence here is given from the outside by the very properties of the Navier-Stokes equations.

The second one is the absence of any $L^p$ bound uniform in time in the uniqueness class.
From the algebra of the equation and the regularity assumption we made, one could indeed deduce boundedness in time but only in a Sobolev space of strongly negative index, like $H^{-2}(\mathbb{X}^3)$.
The author is unaware of any uniqueness result for similar equations in such low-regularity spaces of distributions.

We then proceed to decompose the full vorticity $\Omega$ only in terms of $\omega$ and $\partial_3 u^3$, thanks to the div-curl decomposition, otherwise known as the Biot-Savart law.
This decomposition essentially relies on the fact that a $2D$ vector field is determined by its $2D$ vorticity and divergence.
In the case of $(u^1, u^2)$, its $2D$ divergence is $- \partial_3 u^3$, because $u$ is divergence free and its $2D$ vorticity is exactly $\omega$.

Let us introduce some piece of notation, which is taken from \cite{CheminZhang}.
We denote
$$\nabla_h := (\partial_1, \partial_2) \: , \: \nabla_h^{\perp} := (- \partial_2, \partial_1) \: , \: \Delta_h := \partial_1^2 + \partial_2^2. $$

Hence, we can write, denoting $u^h := (u^1, u^2)$,
$$
 u^h = u^h_{\text{curl}} + u^h_{\text{div}},
$$
where 
$$u^h_{\text{curl}} := \nabla_h^{\perp} \Delta_h^{-1}\omega \: , \: u^h_{\text{div}} := \nabla_h \Delta_h^{-1}(- \partial_3u^3). $$

We thus obtain a decomposition of the force $\Omega \cdot \nabla u^3$ into a sum a terms which are of two types.
The first are linear in both $\omega$ and $u^3$, while the others are quadratic in $u^3$ and contain no occurrence of $\omega$.
The first ones write as
$$
\omega \partial_3 u^3 + \partial_2u^3 \partial_3 u^1_{\text{curl}} - \partial_1 u^3 \partial_3 u^2_{\text{curl}},
$$
while the terms quadratic in $u^3$ are
$$
\partial_2 u^3 \partial_3 u^1_{\text{div}} - \partial_1 u^3 \partial_3 u^2_{\text{div}}.
$$
In other words, our $\omega$ is now the solution of some modified, anisotropic transport-diffusion equation with forcing terms.
The forcing terms are exactly those quadratic in $u^3$ mentioned above and by our assumption on $u^3$, they lie in $L^1(\mathbb{R}_+, L^{\frac 32}(\mathbb{X}^3))$.

We use again our strategy based on uniqueness. 
On this new, anisotropic equation, we prove a uniqueness result \emph{in a regularity class in which $\omega$ now lies}, that is, in
$$
L^{\infty}(\mathbb{R}_+, L^{\frac 65}(\mathbb{X}^3)) \cap L^2(\mathbb{R}_+,\dot{W}^{1,\frac 65}(\mathbb{X}^3)),
$$
which is a space of functions more regular than the mere $L^2(\mathbb{R}_+ \times \mathbb{R}^3)$ given by J. Leray existence theorem.
We then proceed to prove the existence of a solution to this anisotropic equation in the energy space associated to $L^{\frac 32}(\mathbb{R}^3)$, which is
$$
L^{\infty}(\mathbb{R}_+, L^{\frac 32}(\mathbb{X}^3)) \cap L^2(\mathbb{R}_+, W^{1,\frac 32}(\mathbb{X}^3)).
$$
Again, Sobolev and Lebesgue embeddings (see the remark in the beginning of this section) entail that the energy space associated to $L^{\frac 32}(\mathbb{X}^3)$ 
embeds in that associated to $L^{\frac 65}(\mathbb{X}^3)$.
Thanks to the second uniqueness result, we deduce once again that $\omega$ has more regularity than assumed.
More precisely, we have proved that $\omega$ lies in 
$$
L^{\infty}(\mathbb{R}_+, L^{\frac 32}(\mathbb{X}^3)) \cap L^2(\mathbb{R}_+, W^{1,\frac 32}(\mathbb{X}^3)).
$$

Now that we have lifted the regularity of $\omega = \omega_3$ to that of $\nabla u^3$, it remains to improve the two other components of the vorticity.
Keeping in mind that we now control two independant quantities in a high regularity space instead of one as we originally assumed, the remainder of the proof shall be easier than its beginning.

At first sight, $\omega_1$ and $\omega_2$ solve two equations which both look very similar to Equation (\ref{Equation_omega}).
Indeed, we have
\begin{equation}
 \left \{
 \begin{array}{c c}
   \partial_t \omega_1 + \nabla \cdot (\omega_1 u) - \Delta \omega_1 = \Omega \cdot \nabla u^1 \\
   \partial_t \omega_2 + \nabla \cdot (\omega_2 u) - \Delta \omega_2 = \Omega \cdot \nabla u^2.
 \end{array}
  \right.
  \label{Systeme_omega}
 \end{equation}
We again make use of the div-curl decomposition, but in a somewhat \emph{adaptative} manner.
Recall that, when we improved the regularity of $\omega_3$, we performed a div-curl decomposition with respect to the third variable.
Such a decomposition has the drawback of forcing the appearance of anisotropic operators, which make lose regularity in some variables and gain regularity in others.

Let us pause for a moment to notice something interesting.
From the div-curl decomposition with respect to the third variable, we know that me way write
$$ 
u^h := (u^1, u^2) = \nabla_h^{\perp} \Delta_h^{-1}\omega + \nabla_h \Delta_h^{-1}(- \partial_3u^3).
$$
Taking the horizontal gradient then gives
$$
\nabla_h u^h = \nabla_h \nabla_h^{\perp} \Delta_h^{-1} \omega + \nabla_h^2 \Delta_h^{-1} (- \partial_3 u^3).
$$
That is, $\nabla_h u^h$ may be written as a linear combination of zero order \emph{isotropic} differential operators applied to $\omega = \omega_3$ and $\partial_3 u^3$.
In other words, as a consequence of the H\"ormander-Mikhlin theorem in three dimensions, the four components of the jacobian matrix $\partial_i u^j, 1 \leq i,j \leq 2$ have the same regularity as $\omega_3$ and $\partial_3 u^3$.

Now that we have some regularity on both $u^3$ and $\omega_3$, we may choose to perform the div-curl decomposition with respect to the second variable for $u^1$ and to the first variable for $u^2$.
Since the $2D$ divergence of $(u^3, u^1)$ is $-\partial_2 u^2$ and its $2D$ vorticity is $\omega_2$, we have 
$$
u^1 = \partial_3 \Delta^{-1}_{(1,3)} \omega_2 - \partial_1 \Delta^{-1}_{(1,3)} \partial_2 u^2.
$$
In turn, taking the derivative with respect to the third variable gives
$$
\partial_3 u^1 = \partial_3^2 \Delta^{-1}_{(1,3)} \omega_2 - \partial_3 \partial_1 \Delta^{-1}_{(1,3)} \partial_2 u^2.
$$
That is, $\partial_3 u^1$ may be expressed as the sum of a term linear in $\omega_2$ and a source term which is, for instance, in $L^2(\mathbb{R}_+, L^3(\mathbb{X}^3))$.
A similar decomposition also applies to $\partial_3 u^2$.
Consequently, the system on $(\omega_1, \omega_2)$ may be recast informally in the following form.
$$
 \left \{
 \begin{array}{c c}
   \partial_t \omega_1 + \nabla \cdot (\omega_1 u) - \Delta \omega_1 = (\text{lin. term in }\omega_2) + (\text{source terms in }L^1(\mathbb{R}_+, L^{\frac 32}(\mathbb{X}^3))) \\
   \partial_t \omega_2 + \nabla \cdot (\omega_2 u) - \Delta \omega_2 = (\text{lin. term in }\omega_1) + (\text{source terms in }L^1(\mathbb{R}_+, L^{\frac 32}(\mathbb{X}^3))).
 \end{array}
  \right.
$$
Thus, it only remains to prove a uniqueness lemma similar to what we did for Equation (\ref{Equation_omega}), 
along with an existence statement in the energy space associated to $L^{\frac 32}(\mathbb{X}^3)$.
We will then have proved that the full vorticity $\Omega$ was actually in, say, $L^4(\mathbb{R}_+, L^2(\mathbb{X}^3)))$, 
entailing that the whole velocity field lies in $L^1(\mathbb{R}_+, \dot{H}^1(\mathbb{X}^3)))$.
A direct application of the standard Serrin criterion concludes the proof.

\section{Notations}

We define here the notations we shall use in this paper, along with some useful shorthands which we shall make a great use thereof.

If $a$ is a real number or a scalar function, we define for $p > 0$ the generalized power $a^p$ by 
$$
 a^p := a |a|^{p-1}
$$
if $a \neq 0$ and $0$ otherwise.
Such a definition has the advantage of being reversible, in that we have the equality $a = (a^p)^{\frac 1p}$.

Spaces like $L^p(\mathbb{R}_t, L^q(\mathbb{X}_x^3))$ or $L^p(\mathbb{R}_t, W^{s,q}(\mathbb{X}_x^3))$ will have their name shortened simply to $L^pL^q$ and $L^qW^{s,q}$.

As we will have to deal with anisotropy, spaces such as $L^p(\mathbb{R}_t, L^q(\mathbb{X}_z, L^r(\mathbb{X}_{x,y}^2)))$ shall be simply written $L^pL^qL^r$ when the context prevents any ambiguity.

When dealing with regularizations procedures, often done through convolutions, we will denote the smoothing parameter by $\delta$ and the mollifying kernels by $(\rho_{\delta})_{\delta}$.

If $X$ is either a vector or scalar field which we want to regularize, we denote by $X^{\delta}$ the convolution $\rho_{\delta} \ast X$.

Conversely, assume that we have some scalar or vector field $Y$ which is a solution of some (partial) differential equation whose coefficients are generically denoted by $X$.
Both $X$ and $Y$ are to be thought as having low regularity. 
We denote by $Y_{\delta}$ the unique smooth solution of the same (partial) differential equation where all the coefficients $X$ are replaced by their regularized counterparts $X^{\delta}$.

If $1 \leq k \leq n$, the horizontal variable associated to the vertical variable $k$ in $\mathbb{R}^n$ is the $n-1$ tuple of variables $(1, \dots, k-1, k+1, \dots, n)$.
In practice, we will restrict our attention to $n=3$, in which case the horizontal variable associated to, say, $3$ is none other than $(1,2)$.

Now, for $1 \leq i,j,k \leq 3$, we denote by $A^k_{i,j}$ the operator $\partial_i \partial_j \Delta_{h_k}^{-1}$, with $h_k$ being the horizontal variable associated to the vertical variable $k$.
We divide these $18$ operators into three subsets.

First, we say that $A^k_{i,j}$ is \textit{isotropic} if we have both $i \neq k$ and $j \neq k$. 
This corresponds to the case where the two derivatives lost through the derivations are actually gained by the inverse laplacian. 
Applying the H\"ormander-Mikhlin multiplier theorem in two dimensions shows that these operators are bounded from $L^p(\mathbb{X}^3)$ to itself for any $1 < p < \infty$.
There are $9$ such operators.

The second class is that of the $A^k_{i,j}$ for which exactly one on the two indices $i$ and $j$ is equal to $k$ while the other is not. 
We say that such operators are \textit{weakly anisotropic}.
Here, we lose one derivative in the vertical variable and gain one in the horizontal variable.
There are $6$ such operators.

The third and last class, which we will not have to deal with in this paper thanks to the peculiar algebraic structure of the equations, is formed by the three $A^k_{k,k} = \partial_k^2 \Delta_{h_k}^{-1}$ for $1 \leq k \leq 3$.
To keep a consistent terminology, we call them \textit{strongly anisotropic}. 
The fact that we lose two derivatives in the vertical variable and gain two derivatives in the horizontal variable while working in dimension $3$ should make this last family quite nontrivial to study.

If $A$ and $B$ are two linear operators, their commutator is defined by $[A,B] := AB - BA$.
We emphasize that, when dealing with commutators, we do not distinguish between a smooth function and the multiplication operator by the said function.

\section{Preliminary lemmas}

We collect in this section various results, sometimes taken from other papers which we will use while proving the main theorems.
We begin by an analogue of the usual energy estimate,whose proof may be found in \cite{CheminZhang} except it is performed in $L^p$ with $p \neq 2$.
\begin{lemma}
 Let $1 < p < \infty$ and $a_0$ in $L^p$. Let $f$ be in $L^1L^p$ and $v$ be a divergence-free vector field in $L^2L^{\infty}$.
 Assume that $a$ is a smooth solution of
 $$
 \left \{
 \begin{array}{c c}
   \partial_t a + \nabla \cdot (a \otimes v) - \Delta a = f \\
  a(0) = a_0.
 \end{array}
  \right.
$$
 Then, $|a|^{\frac p2}$ belongs to $L^{\infty}L^2 \cap L^2H^1$ and we have the $L^p$ energy equality 
 \begin{multline*}
 \frac 1p \|a(t)\|_{L^p}^p + (p-1) \int_0^t \||a(s)|^{\frac{p-2}{2}} \nabla a(s)\|_{L^2}^2 ds \\
   = \frac 1p \|a_0\|_{L^p}^p + \int_0^t \int_{\mathbb{R}^3} f(s,x) a(s,x) |a(s,x)|^{p-2} dx ds.
\end{multline*}
\label{EstimeeEnergieLp}
\end{lemma}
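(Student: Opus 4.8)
\emph{Approach.} The identity is a renormalisation statement: it is what one gets by testing the equation against the generalised power $a^{p-1}=a|a|^{p-2}$ and integrating in space and time. Here $a$ is scalar and the flux is $av$, so $\nabla\cdot(a\otimes v)=\nabla\cdot(av)=v\cdot\nabla a$ by $\mathrm{div}\,v=0$. Formally, multiplying $\partial_t a+\nabla\cdot(av)-\Delta a=f$ by $a^{p-1}$ and integrating in $x$, the three terms on the left behave as follows. The time derivative gives $\int a^{p-1}\partial_t a\,dx=\frac1p\frac{d}{dt}\|a\|_{L^p}^p$, since $\frac1p\partial_t|a|^p=a^{p-1}\partial_t a$. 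The transport term vanishes: $\int(v\cdot\nabla a)\,a^{p-1}\,dx=\int v\cdot\nabla\bigl(\tfrac1p|a|^p\bigr)\,dx=0$. The diffusion term, after one integration by parts, becomes $\int\nabla a\cdot\nabla(a^{p-1})\,dx=(p-1)\int|a|^{p-2}|\nabla a|^2\,dx=(p-1)\||a|^{(p-2)/2}\nabla a\|_{L^2}^2$, because $\nabla(a^{p-1})=(p-1)|a|^{p-2}\nabla a$. The forcing term is exactly $\int f\,a^{p-1}\,dx$. Integrating in time yields the claim. The \emph{only} difficulty is that for $1<p<2$ the multiplier $a^{p-1}$ is not $C^1$: its derivative $(p-1)|a|^{p-2}$ is singular at the zeros of $a$, so the integration by parts above is not licit as it stands. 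I would therefore carry the computation out on a regularised functional and pass to the limit.

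\emph{Regularisation.} For $1<p<2$ I would replace $\frac1p|s|^p$ by $\frac1p\phi_\varepsilon(s)$ with $\phi_\varepsilon(s):=(\varepsilon^2+s^2)^{p/2}-\varepsilon^p$, the constant being subtracted so that $\phi_\varepsilon$ composed with $a$ stays integrable on $\mathbb{R}^3$, and test the smooth equation against $\frac1p\phi_\varepsilon'(a)=a(\varepsilon^2+a^2)^{(p-2)/2}$, which is smooth and bounded near the zeros of $a$. Since $a$ is a smooth solution, decaying at infinity on $\mathbb{R}^3$ or periodic on $\mathbb{T}^3$, no boundary term appears and every integration by parts is licit. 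The three computations above then go through verbatim and give, after integration in time,
\begin{equation*}
\frac1p\int\phi_\varepsilon(a(t))\,dx+\int_0^t\!\!\int\tfrac1p\phi_\varepsilon''(a)|\nabla a|^2\,dx\,ds=\frac1p\int\phi_\varepsilon(a_0)\,dx+\int_0^t\!\!\int f\,a(\varepsilon^2+a^2)^{\frac{p-2}{2}}\,dx\,ds,
\end{equation*}
where a direct computation (the subtracted constant being irrelevant to the derivatives) gives $\tfrac1p\phi_\varepsilon''(s)=(\varepsilon^2+s^2)^{(p-4)/2}\bigl(\varepsilon^2+(p-1)s^2\bigr)\ge0$.

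\emph{Passage to the limit.} For the $L^p$ terms, $0\le\phi_\varepsilon(a)\le|a|^p$ by subadditivity of $x\mapsto x^{p/2}$ (valid for $p\le2$) and $\phi_\varepsilon(a)\to|a|^p$ pointwise, so dominated convergence gives $\int\phi_\varepsilon(a(t))\,dx\to\|a(t)\|_{L^p}^p$ and likewise at $t=0$. For the forcing term, $|a(\varepsilon^2+a^2)^{(p-2)/2}|\le|a|^{p-1}$, and $|f|\,|a|^{p-1}$ is integrable in space-time by H\"older: its $L^{p'}$ norm in $x$ equals $\|a\|_{L^p}^{p-1}$ with $p'=p/(p-1)$, and $a\in L^\infty_tL^p$ (guaranteed by the smoothness of $a$ and, a posteriori, by the identity itself) against $f\in L^1L^p$; dominated convergence then applies. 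The delicate point is the dissipation term. Here I would first invoke Fatou's lemma, using $\tfrac1p\phi_\varepsilon''(a)|\nabla a|^2\to(p-1)|a|^{p-2}|\nabla a|^2$ almost everywhere (on $\{a=0\}$ one has $\nabla a=0$ a.e., so both sides vanish there), to obtain $(p-1)\int_0^t\!\int|a|^{p-2}|\nabla a|^2\le\liminf_{\varepsilon\to0}\int_0^t\!\int\tfrac1p\phi_\varepsilon''(a)|\nabla a|^2$. Combined with the convergence of the three other terms, this already shows $|a|^{(p-2)/2}\nabla a\in L^2$, i.e. that the dissipation is finite.

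\emph{Conclusion and the hard step.} To upgrade this inequality to the stated equality, I would use the uniform bound $\tfrac1p\phi_\varepsilon''(s)\le|s|^{p-2}$, valid precisely because $\varepsilon^2+(p-1)s^2\le\varepsilon^2+s^2$ when $p\le2$: the integrand $\tfrac1p\phi_\varepsilon''(a)|\nabla a|^2$ is then dominated by $|a|^{p-2}|\nabla a|^2$, which the Fatou step has shown to be integrable, so dominated convergence forces the dissipation to converge to $(p-1)\int_0^t\!\int|a|^{p-2}|\nabla a|^2$ and the equality follows. The regularity claim is then immediate: $\||a|^{p/2}(t)\|_{L^2}^2=\|a(t)\|_{L^p}^p$ is bounded in time by the identity, and since $|\nabla|a|^{p/2}|=\tfrac p2|a|^{(p-2)/2}|\nabla a|$ one has $\|\nabla|a|^{p/2}\|_{L^2L^2}^2=\tfrac{p^2}{4(p-1)}$ times the finite dissipation, so $|a|^{p/2}\in L^\infty L^2\cap L^2H^1$. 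The case $p\ge2$ requires no regularisation, since $a^{p-1}$ is then genuinely $C^1$ and the computation of the first paragraph is directly justified. The main obstacle is thus the range $1<p<2$, where the singular multiplier and, above all, the passage to the limit in the dissipation term demand the combination of Fatou's lemma with the $\varepsilon$-uniform domination described above.
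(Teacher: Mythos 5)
Your proof is correct. Note that the paper itself contains no proof of this lemma: it is quoted from the reference \cite{CheminZhang}, where it is obtained by precisely the computation you describe, namely testing the equation against the generalized power $a|a|^{p-2}$ and using $\text{div}\, v = 0$ to kill the transport term. What your write-up adds beyond that citation is the justification in the range $1<p<2$, where $a|a|^{p-2}$ fails to be $C^1$: the regularization $\phi_\varepsilon(s)=(\varepsilon^2+s^2)^{p/2}-\varepsilon^p$, followed first by Fatou's lemma (which yields finiteness of the dissipation $\||a|^{(p-2)/2}\nabla a\|_{L^2L^2}$) and then by dominated convergence via the $\varepsilon$-uniform bound $\tfrac1p\phi_\varepsilon''(s)\le|s|^{p-2}$, is a clean and complete way to recover the stated \emph{equality} rather than a mere inequality; your verification of $\tfrac1p\phi_\varepsilon''(s)=(\varepsilon^2+s^2)^{(p-4)/2}(\varepsilon^2+(p-1)s^2)$ and of the two dominations is exact. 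Two points deserve to be made explicit, though neither is a gap: (i) ``smooth solution'' must be read as carrying enough spatial decay for the integrations by parts, for $\phi_\varepsilon(a(t))\in L^1$, and for $\sup_{s\le t}\|a(s)\|_{L^p}<\infty$ on finite time intervals --- this is consistent with how the lemma is used in the paper, on mollified solutions with mollified data; (ii) the fact that $\nabla a=0$ almost everywhere on the level set $\{a=0\}$, which you invoke in the Fatou step, is the standard property of (Sobolev or smooth) functions and should be cited as such.
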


Our next lemma is, along with the energy estimate above, one of the cornerstones of our paper.
Thanks to it, we are able to prove that the solutions of some PDEs are more regular than expected.
It may be found in \cite{NoteAuCRAS} and appear as a particular case of Theorem $2$ in \cite{CRASrefait}, to which we refer the reader for a detailed proof.

\begin{lemma}
 Let $v$ be a fixed, divergence free vector field in $L^2H^1$.
 Let $\nu \geq 0$ be a real constant.
 Let $a$ be a $L^2_{loc}L^2$ solution of 
$$
 \left \{
 \begin{array}{c c}
   \partial_t a + \nabla \cdot (a \otimes v) - \nu \Delta a = 0 \\
  a(0) = 0.
 \end{array}
  \right.
$$
Then $a \equiv 0$.
\label{UniciteL2L2}
\end{lemma}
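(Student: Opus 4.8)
The plan is to run a spatial regularization argument in the spirit of DiPerna--Lions, turning the weak equation into a renormalized energy identity and exploiting the favorable sign of the diffusion term; this renormalization is exactly the mechanism underlying the duality-type uniqueness alluded to in the abstract. Since the equation is linear, the statement $a \equiv 0$ is equivalent to the vanishing of the given zero-data solution. First I would mollify in space, setting $a^{\delta} := \rho_{\delta} \ast a$. Convolving the equation and adding and subtracting $v \cdot \nabla a^{\delta}$ (recall $\text{div } v = 0$, so $\nabla \cdot (a \otimes v) = v \cdot \nabla a$ componentwise) gives
\[ \partial_t a^{\delta} + v \cdot \nabla a^{\delta} - \nu \Delta a^{\delta} = r^{\delta}, \qquad r^{\delta} := v \cdot \nabla (\rho_{\delta} \ast a) - \rho_{\delta} \ast (v \cdot \nabla a), \]
with $a^{\delta}(0) = 0$. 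The field $a^{\delta}$ is smooth in $x$ and regular enough in $t$ to be manipulated as in the smooth energy identity of Lemma \ref{EstimeeEnergieLp}.

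The decisive input, and the step I expect to be the \textbf{main obstacle}, is the DiPerna--Lions commutator estimate, which is the only place where the low regularity of $v$ is genuinely felt. Since $v \in L^2 H^1$ and $a \in L^2_{loc}L^2$, the exponents are conjugate ($\tfrac12 + \tfrac12 = 1$), and one has
\[ \|r^{\delta}(t)\|_{L^1_x} \lesssim \|\nabla v(t)\|_{L^2}\, \|a(t)\|_{L^2}, \]
together with $r^{\delta}(t) \to 0$ in $L^1_x$ for almost every $t$. Cauchy--Schwarz in time and dominated convergence then upgrade this to $r^{\delta} \to 0$ in $L^1_{loc}(L^1)$. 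The precise scaling matching of the two $L^2$ norms is what makes the argument viable at this regularity.

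I would then renormalize against a truncation rather than directly against $a^\delta$, since $a$ is unbounded and the commutator only converges in $L^1$. For $R > 0$ let $\beta_R$ be a smooth, nonnegative, convex truncation of $s \mapsto s^2$ (or of $|\cdot|^2$ for a vector field) with $\beta_R'$ bounded. Multiplying the equation for $a^{\delta}$ by $\beta_R'(a^{\delta})$ and integrating in space yields
\[ \frac{d}{dt} \int \beta_R(a^{\delta})\, dx = - \nu \int \beta_R''(a^{\delta})\, |\nabla a^{\delta}|^2\, dx + \int \beta_R'(a^{\delta})\, r^{\delta}\, dx, \]
the transport term integrating to zero because $\text{div } v = 0$. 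As $\nu \geq 0$ and $\beta_R$ is convex, the diffusion term is $\leq 0$ and may be discarded; this is what lets the cases $\nu > 0$ and $\nu = 0$ be treated uniformly, the diffusion only ever contributing a good sign. Integrating in time, using $a^{\delta}(0) = 0$, and letting $\delta \to 0$ — the last integral vanishes because $\beta_R'(a^{\delta})$ is bounded in $L^{\infty}$ while $r^{\delta} \to 0$ in $L^1$, and the left-hand side converges since $\beta_R$ is Lipschitz with $\beta_R(0) = 0$ — I obtain $\int \beta_R(a(t))\, dx \leq 0$ for a.e.\ $t$. Since $\beta_R \geq 0$, this forces $a(t) = 0$ on $\{|a(t)| \leq R\}$, and letting $R \to \infty$ gives $a \equiv 0$.

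Finally, on $\mathbb{R}^3$ the spatial integrations must be localized by a cutoff $\chi_k$; the only additional contributions are of the form $\int \beta_R(a^{\delta})\, v \cdot \nabla \chi_k$, which are controlled by H\"older (with $\beta_R(a^{\delta}) \in L^2$, $v \in L^2 L^6$, and $\|\nabla \chi_k\|_{L^3}$ scale-invariant in dimension three) and vanish as $k \to \infty$ by the decay of the integrable tails. On $\mathbb{T}^3$ compactness renders this step vacuous. I expect all remaining points — the weak-continuity in time needed to make sense of $a(0) = 0$ and of $\int \beta_R(a(t))$, and the a.e.\ extraction along a subsequence — to be routine once the commutator convergence above is in hand.
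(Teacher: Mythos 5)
Your argument is correct in its essentials, but it takes a genuinely different route from the paper's. The paper in fact gives no proof of Lemma \ref{UniciteL2L2} in the text: it invokes the lemma as a black box, referring to \cite{NoteAuCRAS} and to Theorem 2 of \cite{CRASrefait}, where uniqueness is obtained by a \emph{duality} argument --- one solves the adjoint (backward) problem with regularized drift, exploits its regularity, and pairs it with $a$; this is what the abstract means by ``duality in a way quite similar to the DiPerna-Lions theory''. You instead run the renormalization half of DiPerna--Lions directly on $a$: mollify, invoke the commutator lemma at the exactly conjugate exponents $v \in L^2H^1$, $a \in L^2_{loc}L^2$ (you are right that this is the crux; equality $\tfrac12+\tfrac12=1$ is admissible in Lemma II.1 of \cite{DiPerna-Lions} because both exponents are finite, so smooth functions are dense), renormalize with a convex truncation so that the diffusion only ever contributes a nonpositive term for every $\nu \geq 0$, and close with spatial cutoffs. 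Each approach buys something: yours is self-contained, more elementary, and treats $\nu > 0$ and $\nu = 0$ uniformly, never needing parabolic smoothing; the duality route avoids any chain rule on the rough solution itself, which is why it extends to solution classes where renormalization is not available. Two small repairs to your write-up: (i) when you localize on $\mathbb{R}^3$, the diffusion term produces a second cutoff contribution, $-\nu\int \beta_R'(a^{\delta})\,\nabla a^{\delta}\cdot\nabla\chi_k\,dx = \nu\int \beta_R(a^{\delta})\,\Delta\chi_k\,dx$, which you did not list; it is harmless since $\|\Delta\chi_k\|_{L^{\infty}} \lesssim k^{-2}$ while $\beta_R(a^{\delta})$ is bounded in $L^1_x$. (ii) Your conclusion ``$a(t)=0$ on $\{|a(t)|\leq R\}$'' undersells your own inequality: since $\beta_R$ vanishes only at the origin, $\int \beta_R(a(t))\,dx \leq 0$ already forces $a(t) = 0$ almost everywhere for a single fixed $R$, so the limit $R \to \infty$ is unnecessary. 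With these cosmetic points settled, your proof is complete and stands as an alternative to the one the paper outsources to its references.
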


The following lemma has a somewhat probabilistic flavor to it.

\begin{lemma}
 Let $(a_{\delta})_{\delta}$ be a sequence of bounded functions in $L^pL^q$, with  $1 \leq  p,q \leq \infty$. Let $a$ be in $L^pL^q$ and assume that
$$
 \left \{
 \begin{array}{l l}
  a_{\delta} \rightharpoonup^* a \text{ in } L^pL^q  \\
  a_{\delta} \to a \text{ a.e.}
 \end{array}
  \right.
$$
 as $\delta$ goes to $0$.

 Then, for any $\alpha \in ]0,1[$, 
$
  a_{\delta}^{\alpha} \rightharpoonup^* a^{\alpha}\text{ in }L^\frac{p}{\alpha} L^\frac{q}{\alpha}.
$
 \label{ConvergenceFaiblePuissances}
\end{lemma}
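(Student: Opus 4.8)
The plan is to reduce everything to two facts, namely a uniform bound on the powers $a_\delta^\alpha$ in the target space together with their pointwise convergence, and then to upgrade "pointwise plus bounded" to weak-$*$ convergence by a subsequence argument anchored on Egorov's theorem.

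First I would record the elementary scaling identity for the mixed norm. Since $|a_\delta^\alpha| = |a_\delta|^\alpha$, a direct computation gives
$$
\|a_\delta^\alpha\|_{L^{p/\alpha}L^{q/\alpha}} = \|a_\delta\|_{L^pL^q}^\alpha,
$$
so the uniform $L^pL^q$ bound on $(a_\delta)_\delta$ transfers to a uniform bound on $(a_\delta^\alpha)_\delta$ in $L^{p/\alpha}L^{q/\alpha}$; the same identity shows that $a^\alpha$ lies in $L^{p/\alpha}L^{q/\alpha}$. Because $\alpha \in ]0,1[$ and $p,q \geq 1$, both exponents $p/\alpha$ and $q/\alpha$ lie in $]1,\infty]$, so $L^{p/\alpha}L^{q/\alpha}$ is the dual of the separable space $L^{(p/\alpha)'}L^{(q/\alpha)'}$, and Banach--Alaoglu provides weak-$*$ sequential compactness. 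Next I would note that the generalized power $t \mapsto t^\alpha = t|t|^{\alpha-1}$ is continuous on all of $\mathbb{R}$ (including at the origin, where it vanishes, since $\alpha > 0$), so the hypothesis $a_\delta \to a$ a.e.\ immediately yields $a_\delta^\alpha \to a^\alpha$ a.e.

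The crux is then to identify the weak-$*$ limit. I would argue by subsequences: let $b$ be the weak-$*$ limit of an arbitrary weak-$*$ convergent subsequence $a_{\delta_k}^\alpha$, and aim to prove $b = a^\alpha$. Since the underlying space is $\sigma$-finite, it suffices to check this on an arbitrary measurable set $E$ of finite measure. Fixing $\varepsilon>0$, Egorov's theorem furnishes $E_\varepsilon \subset E$ with $|E \setminus E_\varepsilon| < \varepsilon$ on which the convergence $a_{\delta_k}^\alpha \to a^\alpha$ is uniform. For any test function $\phi$ in the predual supported in $E_\varepsilon$, Hölder's inequality gives $\phi \in L^1(E_\varepsilon)$, so uniform convergence lets me pass to the limit in $\int_{E_\varepsilon} a_{\delta_k}^\alpha \phi$ and obtain $\int_{E_\varepsilon} a^\alpha \phi$, while weak-$*$ convergence identifies the same limit as $\int b \phi$. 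Comparing, $b = a^\alpha$ a.e.\ on $E_\varepsilon$, and letting $\varepsilon \to 0$ then exhausting the space gives $b = a^\alpha$ everywhere. As every weak-$*$ cluster point equals $a^\alpha$ and the sequence is bounded in the dual of a separable space, a standard contradiction argument promotes this to convergence of the full sequence, $a_\delta^\alpha \rightharpoonup^* a^\alpha$.

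The main obstacle is this last step, namely reconciling the two modes of convergence: weak-$*$ convergence is blind to pointwise behaviour while pointwise convergence carries no uniform control, so the two cannot be matched directly, and Egorov's theorem is precisely what bridges them, at the price of localizing to finite-measure sets and handling the $\sigma$-finiteness by exhaustion. A secondary technical point to watch is that the argument only uses the uniform bound and the a.e.\ convergence, the assumed weak-$*$ convergence of $(a_\delta)_\delta$ itself being unnecessary; one should nevertheless verify that the duality and the separability of the predual genuinely hold for every admissible pair $(p,q)$, in particular in the endpoint cases $p=\infty$ or $q=\infty$, where the predual exponent degenerates to $1$.
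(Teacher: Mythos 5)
Your argument is correct in substance and rests on the same two pillars as the paper's proof: the scaling identity $\|a_\delta^\alpha\|_{L^{p/\alpha}L^{q/\alpha}}=\|a_\delta\|_{L^pL^q}^\alpha$, which converts the assumed bound into a uniform bound on the powers, and Egorov's theorem, which bridges pointwise convergence and the duality pairing. The architecture, however, is genuinely different. The paper argues directly: it pairs $a_\delta^\alpha-a^\alpha$ with a smooth, spatially compactly supported $g$, splits $[0,T]\times S$ into the Egorov bad set $A_\varepsilon$ and its complement, passes to the limit on the complement by uniform convergence, controls the bad-set contribution \emph{quantitatively} through the interpolation bound $\|\mu_\varepsilon\|_{L^{p'}}\lesssim \varepsilon^{1/p'}$, lets $\delta\to 0$ then $\varepsilon\to 0$, and finishes by density of such $g$ in $L^{p'}L^{q'}$ (finiteness of $p',q'$ is used twice). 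You instead run a soft compactness argument: Banach--Alaoglu gives weak-$*$ cluster points, you identify each cluster point as $a^\alpha$ by testing only against functions supported \emph{inside} the Egorov good set -- so the bad set never needs to be estimated at all -- and uniqueness of cluster points upgrades this to convergence of the full sequence. What your route buys is the elimination of the quantitative bad-set estimate and of the double limit; what it costs is reliance on the duality $(L^{(p/\alpha)'}L^{(q/\alpha)'})^*=L^{p/\alpha}L^{q/\alpha}$ for mixed-norm spaces (Benedek--Panzone) and on separability of the predual for weak-$*$ sequential compactness -- points you correctly flag, and which do hold here since $(p/\alpha)'$ and $(q/\alpha)'$ are finite.

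One step does need repair: for an \emph{arbitrary} measurable set $E\subset\mathbb{R}_+\times\mathbb{X}^3$ of finite measure, it is not true that a predual function $\phi$ supported in $E$ must lie in $L^1$. H\"older requires the indicator $\mathds{1}_E$ to belong to the conjugate mixed space, and this can fail when the time slices $E^t$ have unbounded measure: for instance, with predual $L^1_tL^2_x$, take $|E^t|=t^{-1/2}$ for $t\in\,]0,1[$ (so $|E|=2$) and $\phi(t,\cdot)=t^{-1/2}\mathds{1}_{E^t}$; then $\|\phi\|_{L^1L^2}=\int_0^1 t^{-3/4}dt<\infty$ while $\|\phi\|_{L^1L^1}=\int_0^1 t^{-1}dt=\infty$, so your appeal to ``$\phi\in L^1(E_\varepsilon)$ by H\"older'' breaks down, and with it the passage to the limit by uniform convergence. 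The fix is immediate and is exactly what the paper's choice of test functions does implicitly: exhaust $\mathbb{R}_+\times\mathbb{X}^3$ by product sets $[0,T]\times S$ with $|S|<\infty$, whose slices are uniformly bounded; then bounded functions supported in the Egorov subset of $[0,T]\times S$ lie both in the predual and in $L^1$, and they suffice to identify the cluster point almost everywhere on that set. With this modification your proof is complete. Your closing observation is also correct, and applies equally to the paper's own proof: only the uniform bound and the a.e.\ convergence of $(a_\delta)_\delta$ are ever used; the assumed weak-$*$ convergence of $a_\delta$ itself is not needed.
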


\begin{proof}
 Let us fix some $\alpha$ in $]0,1[$ and let $p' := (1 - \frac{\alpha}{p})^{-1}$ , $q' := (1 - \frac{\alpha}{q})^{-1}$.
 Let $g$ be a smooth function with compact support in space, which we denote by $S$. 
 Let us remark that, from the assumptions we made, $a_{\delta}^{\alpha} \to a^{\alpha}$ almost everywhere.
 By Egorov's theorem, because $[0,T] \times S$ has finite Lebesgue measure, for any $\varepsilon > 0$, 
 there exists a subset $A_{\varepsilon}$ of $[0,T] \times S$ of Lebesgue measure at most $\varepsilon$ such that
$$
  \|a_{\delta}^{\alpha} - a^{\alpha}\|_{L^{\infty}(A_{\varepsilon}^c)} \to 0 \text{ as } \delta \to 0,
$$
 where we use $A_{\varepsilon}^c$ as a shorthand for $([0,T] \times S) \setminus A_{\varepsilon}$.
Out of the bad set $A_{\varepsilon}$, we can simply write
 \begin{align*}
  \left| \int_0^T \int_S (a_{\delta}^{\alpha} - a^{\alpha}) g \mathds{1}_{A_{\varepsilon}^c} dx dt \right| \leq  
  \|a_{\delta}^{\alpha} - a^{\alpha}\|_{L^{\infty}(A_{\varepsilon}^c)} \|g\|_{L^1L^1},
 \end{align*}
and this last quantity goes to $0$ as $\delta$ goes to $0$, for any fixed $\varepsilon$.
Let $\mu_{\varepsilon}(t) := \int_S \mathds{1}_{\varepsilon}(t,x) dx$. 
We notice that $\|\mu\|_{L^1} \leq \varepsilon$, while $\|\mu\|_{L^{\infty}} \leq C$ for some $C$ independant of $\varepsilon$.
By interpolation, this gives $\|\mu\|_{L^{p'}} \lesssim \varepsilon^{\frac{1}{p'}}$.
On $A_{\varepsilon}$, we have
\begin{align*}
 \left| \int_0^T \int_S a_{\delta}^{\alpha} \mathds{1}_{A_{\varepsilon}} g dx dt \right| 
 & \leq \int_0^T \|a_{\delta}^{\alpha}\|_{L^{\frac{q}{\alpha}}(S)} \|g\|_{L^{\infty}} \mu_{\varepsilon}^{\frac{1}{q'}} dt \\
 & \leq \|a_{\delta}^{\alpha}\|_{L^{\frac{p}{\alpha}}L^{\frac{q}{\alpha}}} \|g\|_{L^{\infty}L^{\infty}} \|\mu\|_{L^{\frac{p'}{q'}}}^{\frac{1}{q'}} \\
 & \lesssim \varepsilon^{\frac{1}{p'}}.
\end{align*}
Similarly,
$$
 \left| \int_0^T \int_S a^{\alpha} \mathds{1}_{A_{\varepsilon}} g dx dt \right| \lesssim \varepsilon^{\frac{1}{p'}}.
$$
Letting first $\delta$ then $\varepsilon$ go to $0$, thanks to the fact that $p'$ is finite, we get the desired convergence.
 The case of a general $g$ in $L^{p'}L^{q'}$ is handled by a standard approximation procedure, which is made possible by the finiteness of both $p'$ and $q'$.
\end{proof}

\begin{lemma}
 Let $F$ be in $L^1L^1$ spatially supported in the ball $B(0,R)$ for some $R > 0$. Let $a$ be the unique tempered distribution solving 
$$
 \left \{
 \begin{array}{c c}
   \partial_t a - \Delta a = F \\
  a(0) = 0.
 \end{array}
  \right.
$$
  Then there exists a constant $C = C_R > 0$ such that, for $|x| > 2R$, we have
  \begin{equation}
   |a(t,x)| \leq C_R \|F\|_{L^1L^1} |x|^{-3}.
  \end{equation}
  \label{DecroissanceChaleur}
\end{lemma}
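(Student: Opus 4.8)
The statement is a quantitative decay estimate for the solution of the heat equation with a compactly supported, integrable-in-time forcing. The plan is to write $a$ explicitly via Duhamel's formula against the heat kernel and then exploit the spatial separation between the source (contained in $B(0,R)$) and the observation point (with $|x| > 2R$) to extract the $|x|^{-3}$ decay. We have
$$
a(t,x) = \int_0^t \int_{\mathbb{R}^3} G(t-s, x-y) F(s,y)\, dy\, ds,
\qquad
G(\tau, z) = (4\pi\tau)^{-\frac 32} e^{-\frac{|z|^2}{4\tau}},
$$
where $G$ is the three-dimensional heat kernel. Since $F$ is supported in $B(0,R)$, in the integrand we have $|y| \leq R$, and for $|x| > 2R$ this forces $|x-y| \geq |x| - R > |x|/2$. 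Thus the whole estimate reduces to bounding the heat kernel $G(\tau, z)$ uniformly in $\tau > 0$ under the constraint $|z| > |x|/2$, after which the time integral against $\|F\|_{L^1 L^1}$ is immediate.

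First I would reduce matters to the pointwise kernel bound
$$
\sup_{\tau > 0} G(\tau, z) \leq C\, |z|^{-3} \qquad \text{for all } z \neq 0.
$$
This is an elementary one-variable optimization: writing $G(\tau,z) = (4\pi\tau)^{-\frac 32} e^{-|z|^2/4\tau}$ and setting $\lambda = |z|^2/\tau$, the quantity becomes $(4\pi)^{-\frac 32} |z|^{-3} \lambda^{3/2} e^{-\lambda/4}$, and the factor $\lambda^{3/2} e^{-\lambda/4}$ is a bounded function of $\lambda \in (0,\infty)$, attaining some finite maximum $M$. Hence $G(\tau,z) \leq (4\pi)^{-\frac 32} M\, |z|^{-3}$, with the constant independent of $\tau$. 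This is the crux of the whole argument and it is genuinely easy; the only subtlety is that one must take the supremum over all times $\tau = t-s$, but precisely because the bound is uniform in $\tau$ this causes no trouble.

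Plugging this back and using $|x-y| > |x|/2$ on the support of $F$, I would estimate
$$
|a(t,x)| \leq \int_0^t \int_{B(0,R)} \frac{C}{|x-y|^3}\, |F(s,y)|\, dy\, ds
\leq \frac{C\, 2^3}{|x|^3} \int_0^t \int_{\mathbb{R}^3} |F(s,y)|\, dy\, ds
\leq \frac{8C}{|x|^3}\, \|F\|_{L^1 L^1},
$$
which gives the claim with $C_R = 8C$ (in fact the constant here does not even depend on $R$, though the hypothesis $|x|>2R$ does). I expect the main obstacle to be purely one of rigor rather than difficulty: one should check that the Duhamel representation indeed yields the unique tempered-distribution solution for merely $L^1L^1$ data, and that the integrals above are absolutely convergent so that Tonelli/Fubini applies. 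Since $F \in L^1 L^1$ and the kernel bound provides an integrable majorant away from the singularity, these interchanges are justified, and the stated inequality follows.
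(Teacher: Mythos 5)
Your proposal is correct and follows essentially the same route as the paper's proof: Duhamel's formula, the uniform-in-time kernel bound $\sup_{\tau>0}(4\pi\tau)^{-\frac 32}e^{-|z|^2/4\tau}\lesssim |z|^{-3}$ (the paper phrases this as maximizing $\tau^{-3/2}e^{-A^2/\tau}$ at $\tau = \tfrac{2A^2}{3}$), and the separation $|x-y|\gtrsim |x|$ coming from the support of $F$ and the condition $|x|>2R$. Your added remarks on Fubini and on the Duhamel representation being the tempered solution are just extra rigor on top of the same argument.
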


\begin{proof}
 Let us write explicitly the Duhamel formula for $a$. We have, thanks to the support assumption on $F$,
$$
  a(t,x) = \int_0^t \int_{B(0,R)} (2 \pi (t-s))^{-\frac 32} e^{-\frac{|x-y|^2}{4(t-s)}} F(s,y) dy ds.
$$
As the quantity $\tau^{-3/2} e^{-A^2/\tau}$ reaches its maximum for $\tau = \frac{2A^2}{3}$, we have
$$
  |a(t,x)| \lesssim \int_0^t \int_{B(0,R)} |x-y|^{-3} |F(s,y)| dy ds.
$$
 If $x$ lies far away from the support of $F$, for instance if $|x| > 2R$ in our case, we further have
$$
  |a(t,x)| \leq C_R \int_0^t \int_{B(0,R)} |x|^{-3} |F(s,y)| dy ds = C_R |x|^{-3} \|F\|_{L^1L^1}.
$$
\end{proof}

The following lemma is an easy exercise in functional analysis, whose proof will be skipped.

\begin{lemma}
 Let us define, for some fixed $R > 0$ and $p > 1$, the space 
$$
  \tilde{W}^{1,p}(\mathbb{R}^3) := \{ u \in W^{1,p}(\mathbb{R}^3) \text{ s.t. } \sup_{|x| > 2R} |x|^3 |u(x)| < \infty \}.
$$
Then the embedding of $\tilde{W}^{1,p}$ into $L^p$ is compact.
\label{InjectionSobolevCompacte}
\end{lemma}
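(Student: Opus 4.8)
The plan is to combine the classical Rellich--Kondrachov compactness theorem on balls with a uniform control of the mass at infinity, the latter being exactly what the pointwise decay condition built into $\tilde{W}^{1,p}$ is designed to provide. Equip $\tilde{W}^{1,p}(\mathbb{R}^3)$ with the natural norm $\|u\|_{W^{1,p}} + \sup_{|x|>2R} |x|^3 |u(x)|$ and let $(u_n)_n$ be a sequence bounded by some $M$ for this norm, so that $\|u_n\|_{W^{1,p}} \leq M$ and, crucially, $|u_n(x)| \leq M |x|^{-3}$ for all $|x| > 2R$, uniformly in $n$. The goal is to extract a subsequence converging strongly in $L^p(\mathbb{R}^3)$.

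For the local part, I would fix an increasing sequence of radii $\rho_j \to \infty$ and observe that, since each ball $B(0,\rho_j)$ is a bounded Lipschitz (hence extension) domain, the embedding $W^{1,p}(B(0,\rho_j)) \hookrightarrow L^p(B(0,\rho_j))$ is compact by Rellich--Kondrachov for every $p > 1$ in dimension three. A standard diagonal extraction over $j$ then yields a single subsequence, still denoted $(u_n)$, that converges in $L^p(B(0,\rho))$ for every $\rho > 0$; in particular it is a Cauchy sequence in $L^p$ on each fixed ball.

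It remains to upgrade this to convergence on all of $\mathbb{R}^3$, and this is where the decay hypothesis enters. For $\rho > 2R$ the uniform bound $|u_n(x)| \leq M|x|^{-3}$ gives
$$
 \int_{|x|>\rho} |u_n(x)|^p\, dx \leq M^p \int_{|x|>\rho} |x|^{-3p}\, dx = 4\pi M^p \int_{\rho}^{\infty} r^{2-3p}\, dr = \frac{4\pi M^p}{3p-3}\, \rho^{3-3p},
$$
where the integral converges precisely because $p > 1$ forces the exponent $2-3p$ to be strictly below $-1$, and the right-hand side tends to $0$ as $\rho \to \infty$ uniformly in $n$. Hence, given $\varepsilon > 0$, one chooses $\rho$ so that the tail contribution is below $\varepsilon$ for every $n$, and then uses the local $L^p(B(0,\rho))$ convergence to control the remaining compact part; splitting $\|u_n - u_m\|_{L^p(\mathbb{R}^3)}^p$ into the ball and its complement shows the subsequence is Cauchy in $L^p(\mathbb{R}^3)$, and therefore convergent. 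The main difficulty here is not any single step but the lack of compactness of $\mathbb{R}^3$ — the possibility of mass escaping to infinity — and the whole point is that the decay weight $|x|^3$ together with the threshold $p>1$ makes the tails uniformly small, which is exactly what defeats this obstruction and closes the argument.
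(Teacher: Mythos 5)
Your proof is correct: the combination of Rellich--Kondrachov on an exhaustion by balls, a diagonal extraction, and the uniform tail bound $\int_{|x|>\rho}|u_n|^p\,dx \lesssim M^p\rho^{3-3p} \to 0$ (valid precisely because $p>1$) is a complete argument. The paper itself skips the proof of this lemma, calling it an easy exercise in functional analysis, and your argument is exactly the standard one the author evidently had in mind.
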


The next lemma combines some of the previous ones and plays a key role in the paper.
It allows us to gain regularity on the solutions to transport-diffusion equations for free.

\begin{lemma}
 Let $v$ be a fixed, divergence free vector field in $L^2H^1$.
 Let $\frac 65 < p \leq 2$.
 Let $F = (F_i)_i$ be in $L^1L^p$ and assume that $a = (a_i)_i$ is a solution in $L^2L^2$ of 
$$
 \left \{
 \begin{array}{c c}
   \partial_t a + \nabla \cdot (a \otimes v) - \Delta a = F \\
  a(0) = 0.
 \end{array}
  \right.
$$
 
Then $a$ is actually in $L^{\infty}L^p \cap L^2W^{1,p}$ and moreover, its $i$-th component $a_i$ satisfies the energy inequality
$$
 \frac 1p \|a_i(t)\|_{L^p}^p + (p-1) \int_0^t \||a_i(s)|^{\frac{p-2}{2}} \nabla a_i(s)\|_{L^2}^2 ds
 \leq \int_0^t \int_{\mathbb{R}^3} a_i^{p-1}(s) F_i(s) dx ds.
$$
\label{GainRegulariteLp}
\end{lemma}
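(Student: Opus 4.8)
The plan is to obtain the regularity gain by the same mollify--estimate--pass-to-the-limit scheme that underlies the $L^p$ energy identity of Lemma \ref{EstimeeEnergieLp}, but now starting from a merely $L^2L^2$ solution $a$. First I would regularize the coefficients and data: replace $v$ by $v^\delta = \rho_\delta * v$ and $F$ by $F^\delta$, and let $a_\delta$ denote the unique \emph{smooth} solution of
$$
\partial_t a_\delta + \nabla \cdot (a_\delta \otimes v^\delta) - \Delta a_\delta = F^\delta, \qquad a_\delta(0) = 0,
$$
following the convention for $X^\delta$ and $Y_\delta$ fixed in the Notations section. Because $v^\delta$ is smooth and divergence free and $F^\delta \in L^1L^p$, Lemma \ref{EstimeeEnergieLp} applies componentwise to $a_\delta$ and yields, for each $i$, the exact $L^p$ energy \emph{equality}
$$
\frac 1p \|a_{\delta,i}(t)\|_{L^p}^p + (p-1)\int_0^t \||a_{\delta,i}(s)|^{\frac{p-2}{2}}\nabla a_{\delta,i}(s)\|_{L^2}^2\,ds = \int_0^t \int_{\mathbb{R}^3} a_{\delta,i}^{p-1}(s)\,F^\delta_i(s)\,dx\,ds.
$$

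Next I would bound the right-hand side uniformly in $\delta$. By Hölder in space, $\int a_{\delta,i}^{p-1}F^\delta_i \leq \|a_{\delta,i}\|_{L^p}^{p-1}\|F^\delta_i\|_{L^p}$, and then integrating in time and using $\|F^\delta\|_{L^1L^p}\leq \|F\|_{L^1L^p}$ together with a Grönwall/absorption argument on $\sup_{[0,t]}\|a_{\delta,i}(s)\|_{L^p}$ gives a bound on $(a_\delta)_\delta$ in $L^\infty L^p$ and, through the dissipation term, on $|a_\delta|^{p/2}$ in $L^2 H^1$, hence on $a_\delta$ in $L^2 W^{1,p}$, all uniform in $\delta$. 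These uniform bounds extract weak-$*$ limits. Simultaneously I must identify the limit with the \emph{given} $a$: the uniform $L^2L^2$-type bounds coming from $\frac 65 < p \leq 2$ (so that the energy space associated to $L^p$ embeds into $L^2L^2$ by the Sobolev/Lebesgue embeddings recalled in the overview) let $(a_\delta)$ converge to some limit $\tilde a$ which solves the limiting equation with data $0$; subtracting from $a$ and invoking the $L^2L^2$ uniqueness of Lemma \ref{UniciteL2L2} forces $\tilde a = a$. This is the step that promotes the a priori smooth-approximation regularity onto the original low-regularity solution.

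Finally I would pass to the limit in the energy identity to produce the claimed inequality. Lower semicontinuity of the norms under weak-$*$ convergence handles the two left-hand terms (turning the equality into a $\leq$), so the crux is the convergence of the quadratic right-hand side $\int a_{\delta,i}^{p-1}F^\delta_i$, and this is exactly where Lemma \ref{ConvergenceFaiblePuissances} is designed to intervene: from $a_{\delta,i}\rightharpoonup^* a_i$ together with a.e.\ convergence (extracted along a subsequence via the $L^2H^1$ compactness embodied in Lemma \ref{InjectionSobolevCompacte}, whose weighted decay in turn comes from the heat-kernel estimate of Lemma \ref{DecroissanceChaleur}), one gets $a_{\delta,i}^{p-1}\rightharpoonup^* a_i^{p-1}$ in the dual exponent, so that testing against $F_i$ passes to the limit. \textbf{The main obstacle} I anticipate is precisely securing this right-hand side convergence: one needs the a.e.\ convergence of $a_\delta$ to be legitimate, which is not automatic from weak-$*$ bounds alone and is the reason the spatial-decay lemmas are invoked to gain compactness; controlling the nonlinear power $a_{\delta,i}^{p-1}$ rather than a linear quantity is the delicate point, and marrying the weak convergence of $a_\delta^{p-1}$ with the fixed $F$ through Lemma \ref{ConvergenceFaiblePuissances} (with the exponent bookkeeping $\alpha = p-1$, valid since $0 < p-1 \leq 1$) is what makes the passage to the limit work.
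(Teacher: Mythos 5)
Your proposal follows essentially the same route as the paper's proof: mollify $v$ and $F$, apply the $L^p$ energy equality of Lemma \ref{EstimeeEnergieLp} to the smooth approximations $a_\delta$, derive uniform bounds, identify the weak limit with $a$ via the $L^2L^2$ uniqueness of Lemma \ref{UniciteL2L2}, obtain a.e.\ convergence through the decay and compactness Lemmas \ref{DecroissanceChaleur} and \ref{InjectionSobolevCompacte} (the paper makes the time-compactness explicit via Aubin--Lions), and pass to the limit in the nonlinear terms with Lemma \ref{ConvergenceFaiblePuissances}. The only cosmetic difference is that the paper also invokes Lemma \ref{ConvergenceFaiblePuissances} with $\alpha = \frac p2$ to identify the weak limit of $a_\delta^{p/2}$ before applying Fatou to the gradient term, a step your appeal to lower semicontinuity implicitly requires.
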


\begin{proof}
Before delving into the proof itself, we begin with a simplifying remark.
As the equation on $a_i$ simply writes
$$
 \partial_t a_i + \nabla \cdot (a_i v) - \Delta a_i = F_i,
$$
the equations on the $a_i$ are uncoupled, which allows us prove to prove the lemma only in the scalar case. 
Thus, we assume in the rest of the proof the $a$ is actually a scalar function.

Let $(\rho_{\delta})_{\delta}$ be a sequence of space-time mollifiers.
Let $a_{\delta}$ be the unique solution of the Cauchy system
$$
 \left \{
 \begin{array}{c c}
   \partial_t a_{\delta} + \nabla \cdot (a_{\delta} v^{\delta}) - \Delta a_{\delta} = F^{\delta} \\
  a_{\delta}(0) = 0.
 \end{array}
  \right.
$$
Performing an energy-type estimate in $L^p$, which is made possible thanks to Lemma \ref{EstimeeEnergieLp}, we get for all strictly positive $t$ the equality
$$
 \frac 1p \|a_{\delta}(t)\|_{L^p}^p + (p-1) \int_0^t \||a_{\delta}(s)|^{\frac{p-2}{2}} \nabla a_{\delta}(s)\|_{L^2}^2 ds
 = \int_0^t \int a_{\delta}^{p-1}(s) F^{\delta}(s) dx ds
$$
In turn, it entails that
$$
 \|a_{\delta}(t)\|_{L^p} \leq p \int_0^t \|F^{\delta}(s)\|_{L^p} ds,
$$
which finally gives
$$
  \frac 1p \|a_{\delta}(t)\|_{L^p}^p + (p-1) \int_0^t \||a_{\delta}(s)|^{\frac{p-2}{2}} \nabla a_{\delta}(s)\|_{L^2}^2 ds
 \leq p^{p-2} \left( \int_0^t \|F^{\delta}(s)\|_{L^p} ds  \right)^p.
$$
From the definition of $F^{\delta}$, we infer that
$$
  \frac 1p \|a_{\delta}(t)\|_{L^p}^p + (p-1) \int_0^t \||a_{\delta}(s)|^{\frac{p-2}{2}} \nabla a_{\delta}(s)\|_{L^2}^2 ds
 \leq p^{p-2} \left( \int_0^t \|F(s)\|_{L^p} ds  \right)^p,
$$
where the last term is independent of $\delta$. 
Because $p < 2$, we have a bound on $a_{\delta}$ in $L^{\infty}L^p \cap L^2W^{1,p}$ uniform in $\delta$, thanks to the identity $\nabla a = (\nabla a |a|^{\frac{p-2}{2}})|a|^{\frac{2-p}{2}}$.

We know take the limit $\delta \to 0$. First of all, because $F^{\delta}$ is nothing but a space-time mollification of $F$, we have
$$
 \|F^{\delta} - F\|_{L^1L^p} \to 0 \text{ as } \delta \to 0.
$$

Moreover, the weak-$\ast$ accumulation points of $(a_{\delta})_{\delta}$ in $L^{\infty}L^p$ and $L^2W^{1,p}$ respectively are, in particular,
solutions of the problem
$$
 \left \{
 \begin{array}{c c}
   \partial_t b + \nabla \cdot (b v) - \Delta b = F \\
  b(0) = 0.
 \end{array}
  \right.
$$
Because $p \geq \frac 65$, the space $W^{1,p}(\mathbb{R}^3)$ embeds into $L^q$ for some $q \geq 2$. 
By Lemma \ref{UniciteL2L2}, the only possible accumulation point is none other than $a$.
Thus, as $\delta \to 0$,
$$
 \left \{
 \begin{array}{l l}
   a_{\delta} \rightharpoonup^{\ast} a \text{ in } L^{\infty}L^p \\
   a_{\delta} \rightharpoonup a \text{ in } L^2W^{1,p}.
 \end{array}
  \right.
$$
 From Lemma \ref{DecroissanceChaleur}, we also have
$$
  |a_{\delta}(t,x)| \lesssim |x|^{-3}
$$
for large enough $x$, with constants independant of $\delta$.
Combining the bounds we have on the family $(a_{\delta})_{\delta}$, we have shown that this family is bounded in $L^2_{loc}\tilde{W}^{1,p}$.
On the other hand, the equation on $a_{\delta}$ may be rewritten as
$$
 \partial_t a_{\delta} = - \nabla \cdot (a_{\delta} \otimes v^{\delta}) + \Delta a_{\delta} + F^{\delta}
$$
and the right-hand side is bounded in, say, $L^1_{loc}H^{-2}$, because $p \geq \frac 65$.
 By Aubin-Lions lemma, it follows that the family $(a_{\delta})_{\delta}$ is strongly compact in, say, $L^2_{loc}L^p$.
 Furthermore, once again thanks to Lemma \ref{UniciteL2L2}, it follows that $a$ is the only strong accumulation point of $(a_{\delta})_{\delta}$ in $L^2_{loc}L^p$.
 Thus, 
$$
  a_{\delta} \to a \text{ in } L^2_{loc}L^p.
$$
Thanks to this strong convergence, up to extracting a subsequence $(\delta_n)_n$, we have
$$
 a_{\delta_n} \to a \text{ a.e. as } n \to \infty.
$$
We are now in position to apply Lemma \ref{ConvergenceFaiblePuissances} to the sequence $(a_{\delta_n})_n$.
With $\alpha = \frac p2$, we have 
$$
 a_{\delta_n}^{\frac p2} \rightharpoonup^* a^{\frac p2} \text{ in } L^{\infty}L^2 \text{ as } n \to \infty,
$$
while $\alpha = p-1$ leads to
$$
 a_{\delta_n}^{p-1} \rightharpoonup^* a^{p-1} \text{ in } L^{\infty}L^{\frac{p}{p-1}} \text{ as } n \to \infty.
$$
Using the identity $\nabla(a^{\frac p2}) = \frac p2 a^{\frac{p-2}{2}} \nabla a$ and the energy inequality, we have
$$
 \sup_{n \in \mathbb{N}} \int_0^t \|\nabla(a_{\delta_n}^{\frac p2})\|_{L^2}^2 ds < \infty. 
$$
Since $a_{\delta_n}^{\frac p2} \rightharpoonup^* a^{\frac p2} \text{ in } L^{\infty}L^2 \text{ as } n \to \infty$, applying Fatou's lemma to $a^{\frac p2}$ shows that
$$
 \int_0^t \|\nabla(a^{\frac p2})\|_{L^2}^2 ds \leq \liminf_{n \to \infty} \int_0^t \|\nabla(a_{\delta_n}^{\frac p2})\|_{L^2}^2 ds < \infty.
$$
Taking the limit in the energy inequality, we finally have
$$
 \frac 1p \|a(t)\|_{L^p}^p + (p-1) \int_0^t \||a(s)|^{\frac{p-2}{2}} \nabla a(s)\|_{L^2}^2 ds
 \leq p^{p-2} \left( \int_0^t \|F(s)\|_{L^p} ds  \right)^p.
$$
More interestingly, taking the limit in the energy equality gives us the stronger statement
$$
 \frac 1p \|a(t)\|_{L^p}^p + (p-1) \int_0^t \||a(s)|^{\frac{p-2}{2}} \nabla a(s)\|_{L^2}^2 ds
 \leq \int_0^t \int_{\mathbb{R}^3} a^{p-1}(s) F(s) dx ds .
$$
The proof of the lemma is now complete.
\end{proof}

\begin{lemma}
 Let $v$ be a fixed, divergence free vector field in $L^2H^1$.
 Let $A$ be a matrix-valued function in $L^2L^3$.
 Let $K$ be a matrix whose coefficients are homogeneous Fourier multipliers of order $0$, smooth outside the origin.
 Let $a$ be a solution in $(L^2L^2)^2$ of the equation
 
$$
  \left\{ 
  \begin{array}{c c}
   \partial_t a + \nabla \cdot (a \otimes v) - \Delta a = A K a \\
   a(0) = 0.
  \end{array}
  \right.
$$
Then $a = 0$.
\label{UniciteL2L2Systeme}
\end{lemma}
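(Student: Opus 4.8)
The plan is to treat the zeroth-order coupling $AKa$ as a perturbation of the pure transport-diffusion equation governed by Lemma \ref{UniciteL2L2}, and to close a Gronwall inequality on the $L^2$ norm of $a$. Formally, I would pair the equation with $a$ and integrate in space. The convective term $\int\nabla\cdot(a\otimes v)\cdot a\,dx$ vanishes because $v$ is divergence free, since it equals $\frac12\int v\cdot\nabla|a|^2=-\frac12\int(\operatorname{div}v)|a|^2$, and the diffusive term contributes $+\|\nabla a\|_{L^2}^2$. For the right-hand side, $K$ being a homogeneous multiplier of order $0$ is bounded on $L^2$ by the H\"ormander-Mikhlin theorem, so $\|Ka\|_{L^2}\lesssim\|a\|_{L^2}$; H\"older with exponents $\frac13+\frac12+\frac16=1$ together with the three-dimensional Sobolev embedding $\|a\|_{L^6}\lesssim\|\nabla a\|_{L^2}$ yields
$$
\left|\int AKa\cdot a\,dx\right|\le\|A\|_{L^3}\|Ka\|_{L^2}\|a\|_{L^6}\lesssim\|A\|_{L^3}\|a\|_{L^2}\|\nabla a\|_{L^2}.
$$
A Young inequality absorbs half of $\|\nabla a\|_{L^2}^2$ into the dissipation and leaves $C\|A\|_{L^3}^2\|a\|_{L^2}^2$. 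As $A\in L^2L^3$, the function $t\mapsto\|A(t)\|_{L^3}^2$ lies in $L^1$, so Gronwall's lemma applied from the vanishing initial datum forces $\|a(t)\|_{L^2}=0$ for almost every $t$, that is $a\equiv0$.

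The main obstacle is that $a$ is only assumed to lie in $L^2L^2$, so the energy computation is merely formal and must be justified exactly as in the proof of Lemma \ref{UniciteL2L2}. I would regularize by convolution, setting $a^\delta=\rho_\delta\ast a$; the crucial point is that $K$, being a Fourier multiplier, commutes with the mollification, so $Ka^\delta=(Ka)^\delta$ and the perturbative term retains its product structure. The regularized equation then carries the DiPerna-Lions commutator $r_\delta:=v\cdot\nabla a^\delta-\rho_\delta\ast(v\cdot\nabla a)$, which tends to $0$ precisely because $v\in L^2H^1$ and $a\in L^2L^2$ are spatially conjugate; this is the same mechanism that underlies Lemma \ref{UniciteL2L2}. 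Passing to the limit $\delta\to0$ in the energy identity for $a^\delta$, the commutator drops out, the gradient term is handled by Fatou as in Lemma \ref{GainRegulariteLp}, and the perturbative term converges thanks to the $L^2$ boundedness of $K$, the bound $A\in L^2L^3$, and the strong $L^2_{loc}L^2$ convergence of $a^\delta$ to $a$.

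Equivalently, and in the spirit of the duality method announced in the introduction, one may fix a smooth compactly supported test field $\phi$, solve the backward adjoint problem $-\partial_t w-v\cdot\nabla w-\Delta w-K^\ast(A^\ast w)=\phi$ with $w(T)=0$ in the energy space, and integrate by parts against $a$; the boundary terms vanish by $a(0)=0$ and $w(T)=0$, yielding $\int a\cdot\phi=0$ for every $\phi$, hence $a\equiv0$. Either way, the heart of the matter is the low regularity of $a$: the perturbation $AKa$ is only critical, landing in $L^1L^{\frac65}$, which is exactly the endpoint excluded from Lemma \ref{GainRegulariteLp}, so no gain of integrability is available for free and uniqueness must rest on the renormalization argument rather than on a bootstrap.
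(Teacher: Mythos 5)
Your formal $L^2$ energy estimate is the right kind of computation, but the justification you propose for it does not close, and that justification is the entire content of the lemma. In the mollification step, the equation for $a^\delta=\rho_\delta\ast a$ carries the DiPerna--Lions commutator $r_\delta$, which (with $a\in L^2L^2$ and $\nabla v\in L^2L^2$, the borderline exponents) tends to $0$ only in $L^1_{loc}$. In the energy identity this commutator is paired against $a^\delta$ itself, which is bounded in $L^2$ but not in $L^\infty$; a product of an $L^1$-null sequence with a merely $L^2$-bounded one need not vanish. This is precisely why DiPerna--Lions renormalization is confined to nonlinearities $\beta$ with \emph{bounded} $\beta'$, and why uniqueness in the class $L^2L^2$ (Lemma \ref{UniciteL2L2}, which the paper imports as a black box from external references and explicitly flags as nonstandard) is a delicate result rather than a routine commutator computation; you cannot simply assert that its proof extends. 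The same problem hits your perturbative term: $(AKa)^\delta\to AKa$ in $L^1L^{\frac 65}$ while $a^\delta\to a$ only in $L^2L^2$, and $L^{\frac 65}_x$ does not pair with $L^2_x$; in fact the integral $\int AKa\cdot a\,dx$ is not even absolutely convergent until one knows $a\in L^{\infty}L^2\cap L^2H^1$, i.e.\ until one knows what was to be proved. The duality variant is circular for the same reason: since $av\in L^1L^{\frac 32}$, pairing $a$ with $v\cdot\nabla w$ requires $\nabla w\in L^{\infty}L^3$, far beyond the energy space, and constructing such a solution of the backward problem with coefficients only $v\in L^2H^1$, $A\in L^2L^3$ is as hard as the original uniqueness question.

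The second issue is that you discarded the paper's actual route on the basis of a typographical endpoint. The paper does exactly the bootstrap you declare unavailable: $AKa\in L^1L^{\frac 65}$, so Lemma \ref{GainRegulariteLp} applied with $p=\frac 65$ upgrades $a$ to $L^{\infty}L^{\frac 65}\cap L^2W^{1,\frac 65}$ \emph{together with} the componentwise $L^{\frac 65}$ energy inequality; one then estimates $\int a_i^{\frac 15}(AKa)_i\,dx$ by H\"older ($\frac 13+\frac 16+\frac 12=1$), uses $\|a_j\|_{L^2}\lesssim\||a_j|^{-\frac 25}\nabla a_j\|_{L^2}\|a_j\|_{L^{\frac 65}}^{\frac 25}$, absorbs the gradient terms by Young, and concludes by Gr\"onwall on $\|a\|_{L^{\frac 65}}^{\frac 65}$. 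The strict inequality $\frac 65<p$ in the statement of Lemma \ref{GainRegulariteLp} is immaterial: its proof only uses the embedding $W^{1,p}(\mathbb{R}^3)\hookrightarrow L^q$ for some $q\geq 2$, which holds at the endpoint $p=\frac 65$ (with $q=2$), and the companion Lemma \ref{GainRegulariteSysteme} is stated for $\frac 65\leq p\leq 2$. So the gain of integrability \emph{is} available ``for free,'' and once $a$ lies in the $L^{\frac 65}$ energy space every manipulation is legitimate; no renormalization at the $L^2$ level is needed, and indeed none is known to work at that regularity.
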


\begin{proof}
From the assumptions we made, the right-hand side $AK a$ lies in $L^1L^{\frac 65}$.
Thanks to Lemma \ref{GainRegulariteLp}, $a$ is actually in $L^{\infty} L^{\frac 65} \cap L^2 W^{1, \frac 65}$. 
Moreover, we also have a set of energy estimates in $L^{\frac 65}$ on the components $a_i$ of $a$, which are
$$
 \frac{5}{6} \|a_i(t)\|_{L^{\frac 65}}^{\frac 65} + \frac{1}{5} \int_0^t \||a_i(s)|^{- \frac 25} \nabla a_i(s)\|_{L^2}^2 ds
 \leq \int_0^t \int_{\mathbb{R}^3} a_i^{\frac 15}(s) (A(s) K a(s))_i dx ds.
$$

By Hölder inequality and Sobolev embeddings, we have
\begin{align*}
 \int_0^t \int_{\mathbb{R}^3} a_i^{\frac 15}(s) (A(s) K a(s))_i dx ds 
 & \lesssim  \sum_j \int_0^t \|A(s)\|_{L^3} \|a_i(s)\|_{L^{\frac 65}}^{\frac 15} \|a_j(s)\|_{L^2} ds \\
 & \lesssim  \sum_j \int_0^t \|A(s)\|_{L^3} \|a_i(s)\|_{L^{\frac 65}}^{\frac 15} \|\nabla a_j(s) |a_j(s)|^{-\frac 25}\|_{L^2} \|a_j(s)\|_{L^{\frac 65}}^{\frac 25} ds \\
 & \lesssim  \sum_j \int_0^t \|A(s)\|_{L^3} \|a(s)\|_{L^{\frac 65}}^{\frac 35} \|\nabla a_j(s) |a_j(s)|^{-\frac 25}\|_{L^2} ds.
\end{align*}

Young inequality now ensures that
\begin{multline*}
 \int_0^t \|A(s)\|_{L^3} \|a(s)\|_{L^{\frac 65}}^{\frac 35} \|\nabla a_j(s) |a_j(s)|^{-\frac 25}\|_{L^2} ds \\
 \leq \frac{1}{10} \int_0^t \|\nabla a_j(s) |a_j(s)|^{-\frac 25}\|_{L^2}^2 ds + C \int_0^t \|A(s)\|_{L^3}^2 \|a(s)\|_{L^{\frac 65}}^{\frac 65}  ds.
\end{multline*}

Adding these inequalities and cancelling out the gradient terms, we get
$$
 \frac{5}{6} \|a(t)\|_{L^{\frac 65}}^{\frac 65}
 \lesssim \int_0^t  \|A(s)\|_{L^3}^2 \|a(s)\|_{L^{\frac 65}}^{\frac 65} ds.
$$
Grönwall inequality now implies that $a = 0$.
\end{proof}

\begin{lemma}
 Let $ \frac 65 \leq p \leq 2$.
 Let $v$ be a fixed, divergence free vector field in $L^2H^1$.
 Let $A$ be a matrix-valued function in $L^2L^3$.
 Let $K$ be a matrix whose coefficients are homogeneous Fourier multipliers of order $0$, smooth outside the origin.
 Let $F$ be a fixed function in $L^1L^p$.
 Let $a$ be a solution in $(L^2L^2)^2$ of the equation
 
$$
  \left\{ 
  \begin{array}{c c}
   \partial_t a + \nabla \cdot (a \otimes v) - \Delta a = A K a + F  \\
   a(0) = 0.
  \end{array}
  \right.
$$
Then $a$ is actually in $L^{\infty} L^p \cap L^2W^{1, p}$.
\label{GainRegulariteSysteme}
\end{lemma}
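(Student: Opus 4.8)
The plan is to graft the absorption argument behind Lemma~\ref{UniciteL2L2Systeme} onto the regularisation scheme of Lemma~\ref{GainRegulariteLp}. One should first note why the obvious idea fails: feeding the whole right-hand side $AKa+F$ into Lemma~\ref{GainRegulariteLp} cannot reach the level $p$ carried by $F$, because the linear term $AKa$ is \emph{critical}. Indeed, if $a$ were already known to lie in $L^\infty L^{p_0}\cap L^2W^{1,p_0}$, then $Ka\in L^2L^{p_0^*}$ with $\tfrac1{p_0^*}=\tfrac1{p_0}-\tfrac13$, so that $A\in L^2L^3$ forces $AKa\in L^1L^{p_0}$ and nothing better; an iteration built on Lemma~\ref{GainRegulariteLp} therefore stalls at the integrability of $a$ itself. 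The term $AKa$ must instead be kept inside the energy identity and balanced against the dissipation, exactly as in Lemma~\ref{UniciteL2L2Systeme}, only the genuine source $F$ being treated as a forcing.

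Concretely, I would mollify the data and let $a_\delta$ be the smooth solution (by standard linear parabolic theory, $K$ being nonlocal but bounded) of
$$\partial_t a_\delta + \nabla\cdot(a_\delta\otimes v^\delta) - \Delta a_\delta = A^\delta K a_\delta + F^\delta, \qquad a_\delta(0)=0,$$
keeping the zeroth-order term self-consistent. Writing the $L^p$ energy equality of Lemma~\ref{EstimeeEnergieLp} componentwise, the forcing yields $\int a_{\delta,i}^{p-1}F_i^\delta\le\|a_{\delta,i}\|_{L^p}^{p-1}\|F_i\|_{L^p}$, while the critical term is controlled by Hölder with exponents $(p',3,p^*)$, $\tfrac1{p^*}=\tfrac1p-\tfrac13$, and the boundedness of $K$ on $L^{p^*}$, giving $\lesssim\|A^\delta\|_{L^3}\|a_\delta\|_{L^p}^{p-1}\|a_\delta\|_{L^{p^*}}$. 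The Gagliardo--Nirenberg interpolation $\|a_\delta\|_{L^{p^*}}\lesssim\|a_\delta\|_{L^p}^{1-p/2}\,\big\||a_\delta|^{(p-2)/2}\nabla a_\delta\big\|_{L^2}$, in which the dissipative quantity enters to the first power, then lets Young's inequality split this contribution into a small multiple of $\big\||a_\delta|^{(p-2)/2}\nabla a_\delta\big\|_{L^2}^2$ — absorbed by the left-hand side — plus a term $\lesssim\|A^\delta\|_{L^3}^2\|a_\delta\|_{L^p}^p$. Summing over $i$ and reducing the resulting inequality for $m(t)=\|a_\delta(t)\|_{L^p}$ to the linear form $\dot m\lesssim\|A^\delta\|_{L^3}^2\,m+\|F^\delta\|_{L^p}$, Grönwall's lemma gives a bound in $L^\infty L^p$ and then, via $\nabla a=\big(|a|^{(p-2)/2}\nabla a\big)|a|^{(2-p)/2}$, in $L^2W^{1,p}$, uniformly in $\delta$ and controlled by $\|A\|_{L^2L^3}$ and $\|F\|_{L^1L^p}$.

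It then remains to pass to the limit, and here, since no energy \emph{equality} is asserted in the conclusion, I expect to avoid the almost-everywhere machinery of Lemmas~\ref{ConvergenceFaiblePuissances}, \ref{DecroissanceChaleur} and \ref{InjectionSobolevCompacte}. The uniform bounds furnish, after extraction, weak-$*$ limits $a_\delta\rightharpoonup^* b$ in $L^\infty L^p$ and $a_\delta\rightharpoonup b$ in $L^2W^{1,p}$, and lower semicontinuity of the norms already places $b$ in $L^\infty L^p\cap L^2W^{1,p}$. To identify $b$ with $a$, one passes to the limit in the equation: $v^\delta\to v$ in $L^2L^6$ and $A^\delta\to A$ in $L^2L^3$ strongly, so the weak-strong products $a_\delta\otimes v^\delta$ and $A^\delta Ka_\delta$ converge in the sense of distributions to $b\otimes v$ and $AKb$, while $F^\delta\to F$ in $L^1L^p$; thus $b$ solves the same Cauchy problem. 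As $b\in L^2W^{1,p}\hookrightarrow L^2L^2$, the difference $a-b$ lies in $(L^2L^2)^2$ and solves the homogeneous problem, whence $a=b$ by Lemma~\ref{UniciteL2L2Systeme}.

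The main obstacle is precisely the criticality of $AKa$: living at the integrability of the unknown, it can only be tamed by the dissipation, so the entire estimate rests on the interpolation exponents aligning so that the dissipative quantity appears linearly and can be absorbed after Young's inequality. The scheme then closes only because the gained space $L^2W^{1,p}$ embeds into $L^2L^2$, where Lemma~\ref{UniciteL2L2Systeme} applies; this is where the hypothesis $p\ge\tfrac65$ (equivalently $p^*\ge2$) is genuinely used. Beyond this, the points demanding care are the uniformity in $\delta$ of the absorption constant and the well-posedness and smoothness of the self-consistent regularised problem involving the nonlocal operator $K$.
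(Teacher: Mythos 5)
Your proposal is correct and follows essentially the route the paper intends: the paper's own ``proof'' of this lemma is a one-line reference to Lemma~\ref{GainRegulariteLp}, whose scheme (mollification with a self-consistent regularized problem, componentwise $L^p$ energy equality from Lemma~\ref{EstimeeEnergieLp}, uniform bounds, weak limits, identification of the limit with $a$ via an $L^2L^2$ uniqueness lemma) you reproduce, correctly supplying the one ingredient that a verbatim repetition would miss, namely the H\"older--Sobolev--Young absorption of the critical term $AKa$ into the dissipation exactly as in the proof of Lemma~\ref{UniciteL2L2Systeme}, with the right interpolation exponent so that the dissipative quantity enters linearly. Your streamlining of the limit passage --- dropping the a.e.-convergence/Fatou machinery of Lemmas~\ref{ConvergenceFaiblePuissances}, \ref{DecroissanceChaleur} and \ref{InjectionSobolevCompacte} because no energy inequality is asserted in the conclusion, and concluding instead by weak lower semicontinuity plus Lemma~\ref{UniciteL2L2Systeme} applied to $a-b$ (which lies in $(L^2L^2)^2$ precisely because $p\geq \tfrac 65$ gives $W^{1,p}\hookrightarrow L^2$) --- is legitimate and consistent with how the lemma is actually used later in the paper.
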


\begin{proof}
The proof follows closely the steps of Lemma \ref{GainRegulariteLp}, so we shall skip it.
\end{proof}

\begin{lemma}
 Let $v$ be a fixed, divergence free vector field in $L^2H^1$.
 Let $a$ be a $L^{\infty}L^{\frac 65} \cap L^2W^{1,\frac 65}$ solution of the linear system
$$
 \left \{
 \begin{array}{c c}
   \partial_t a + \nabla \cdot (a v) - \Delta a = \alpha a + \sum_{i,j = 1,2} \varepsilon_{i,j} (\partial_j \beta_i) A^3_{3,i}  a \\ 
  a(0) = 0,
 \end{array}
  \right.
  \label{EquationComplete}
$$
 with $\varepsilon_{i,j} \in \{0,1\}$ for any $1 \leq i,j \leq 2$. 
 We also assume that $\alpha$ lies in $L^2L^3$ and that all the $\beta_i$'s are in $L^2H^{\frac 32}$.
 Then $a \equiv 0$.
 \label{UniciteAnisotropeL6/5}
\end{lemma}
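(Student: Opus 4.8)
The plan is to run an $L^{6/5}$ energy estimate on $a$ and to close it by Grönwall's inequality. As in the proof of Lemma \ref{GainRegulariteLp}, the estimate is justified by regularizing the coefficients $v,\alpha,\beta_i$, applying the $L^p$ energy equality of Lemma \ref{EstimeeEnergieLp} to the smooth approximations, and passing to the limit with Lemma \ref{ConvergenceFaiblePuissances}; I suppress these now-routine steps. Taking $p=\tfrac65$, the energy inequality reads
\[
\frac56\|a(t)\|_{L^{6/5}}^{6/5}+\frac15\int_0^t\big\||a(s)|^{-2/5}\nabla a(s)\big\|_{L^2}^2\,ds\leq\int_0^t\!\int a^{1/5}\Big(\alpha a+\sum_{i,j}\varepsilon_{i,j}(\partial_j\beta_i)\,A^3_{3,i}a\Big)\,dx\,ds.
\]
Throughout I use the factorization $\nabla a=(|a|^{-2/5}\nabla a)\,|a|^{2/5}$, which by Hölder gives $\|\nabla a\|_{L^{6/5}}\leq\big\||a|^{-2/5}\nabla a\big\|_{L^2}\|a\|_{L^{6/5}}^{2/5}$. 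The linear term $\alpha a$ is treated exactly as the term $AKa$ in Lemma \ref{UniciteL2L2Systeme}: Hölder and Sobolev bound it by $\|\alpha\|_{L^3}\|a\|_{L^{6/5}}^{3/5}\big\||a|^{-2/5}\nabla a\big\|_{L^2}$, which Young's inequality splits into a small multiple of the dissipation plus the $L^1_t$ Grönwall weight $\|\alpha\|_{L^3}^2\|a\|_{L^{6/5}}^{6/5}$.

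The heart of the matter is the anisotropic term, for which I would prepare two ingredients. First, writing $A^3_{3,i}=\partial_3\partial_i\Delta_h^{-1}=\partial_i\Delta_h^{-1}\partial_3$ for $i\in\{1,2\}$, the operator $\partial_i\Delta_h^{-1}$ acts only in the horizontal variables and is, slice by slice in $x_3$, a two-dimensional operator of order $-1$; by the two-dimensional Hardy--Littlewood--Sobolev embedding it maps $L^{6/5}_h$ into $L^3_h$, whence (with $v,h$ denoting the vertical $x_3$ and horizontal $(x_1,x_2)$ variables)
\[
\big\|A^3_{3,i}a\big\|_{L^{6/5}_v L^3_h}\lesssim \|\partial_3 a\|_{L^{6/5}}\leq \big\||a|^{-2/5}\nabla a\big\|_{L^2}\,\|a\|_{L^{6/5}}^{2/5}.
\]
Second, I would exploit the full strength of the hypothesis $\beta_i\in L^2 H^{3/2}$ through the anisotropic trace embedding $H^{3/2}(\mathbb{X}^3)\hookrightarrow L^\infty(\mathbb{X}_{x_3};H^1(\mathbb{X}^2_h))$, which loses half a derivative in the restriction to horizontal slices. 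Since $\partial_j$ ($j\in\{1,2\}$) is a horizontal derivative, this yields $\partial_j\beta_i\in L^\infty_v L^2_h$ with $\|\partial_j\beta_i\|_{L^\infty_v L^2_h}\lesssim\|\beta_i\|_{H^{3/2}}$, a quantity lying in $L^2_t$.

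With these two facts a three-factor anisotropic Hölder inequality closes the estimate. Pairing $a^{1/5}\in L^6_v L^6_h$ (with $\|a^{1/5}\|_{L^6}=\|a\|_{L^{6/5}}^{1/5}$), $\partial_j\beta_i\in L^\infty_v L^2_h$ and $A^3_{3,i}a\in L^{6/5}_v L^3_h$, whose exponents sum to $1$ both vertically ($\tfrac16+0+\tfrac56$) and horizontally ($\tfrac16+\tfrac12+\tfrac13$), gives at each time
\[
\Big|\int a^{1/5}(\partial_j\beta_i)\,A^3_{3,i}a\,dx\Big|\lesssim \|\partial_j\beta_i\|_{L^\infty_v L^2_h}\,\|a\|_{L^{6/5}}^{3/5}\,\big\||a|^{-2/5}\nabla a\big\|_{L^2}.
\]
Integrating in time and applying Young's inequality absorbs the dissipative factor into the left-hand side and leaves the Grönwall weight $\|\partial_j\beta_i\|_{L^\infty_v L^2_h}^2\,\|a\|_{L^{6/5}}^{6/5}$, whose prefactor is in $L^1_t$. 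Combining with the contribution of $\alpha a$, the energy inequality reduces to $\|a(t)\|_{L^{6/5}}^{6/5}\lesssim\int_0^t g(s)\|a(s)\|_{L^{6/5}}^{6/5}\,ds$ with $g\in L^1_t$, and Grönwall's inequality forces $a\equiv 0$.

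I expect the main obstacle to be precisely the vertical integrability. The weakly anisotropic operator $A^3_{3,i}$ gains a horizontal derivative but inherits the poor vertical integrability of $\partial_3 a$, so a naive isotropic Hölder (treating $\partial_j\beta_i$ merely as an element of $L^3$) fails, since the vertical exponents already overshoot $1$. The resolution---and the reason the hypothesis is $\beta_i\in H^{3/2}$ rather than something weaker---is that the trace embedding converts the half-derivative surplus of $H^{3/2}$ into $L^\infty$ control in the vertical variable, curing the mismatch exactly.
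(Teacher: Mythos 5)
Your proof is correct and coincides with the paper's own argument in all essentials: the paper likewise works from the $L^{6/5}$ energy inequality (obtained by viewing the whole right-hand side as a fixed forcing in $L^1L^{6/5}$ and invoking Lemma \ref{GainRegulariteLp}, which packages the regularization steps you sketch), bounds $\int a^{6/5}\alpha$ by H\"older, Sobolev and Young, and treats the anisotropic term exactly as you do, via the trace embedding $H^{3/2}(\mathbb{X}^3)\hookrightarrow L^\infty_{x_3}H^1_h$ applied to $\beta_i$ (with the same observation that this only works because $\partial_j$ is horizontal), the two-dimensional gain $\|A^3_{3,i}a\|_{L^{6/5}_{x_3}L^3_h}\lesssim\|\partial_3 a\|_{L^{6/5}}$, the identical $\bigl(\tfrac16,0,\tfrac56\bigr)$/$\bigl(\tfrac16,\tfrac12,\tfrac13\bigr)$ H\"older pairing, and Gr\"onwall. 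The only point to phrase carefully is the justification of the energy inequality: the approximate problems must carry a regularization of the \emph{fixed} forcing $\alpha a+\sum_{i,j}\varepsilon_{i,j}(\partial_j\beta_i)A^3_{3,i}a$ built from the given solution $a$, with the limit identified through Lemma \ref{UniciteL2L2} as in the proof of Lemma \ref{GainRegulariteLp}, rather than the regularized coefficients acting on the approximate unknown itself, since those linear problems with zero initial data would simply have the trivial solution and the identification of the limit with $a$ would become circular.
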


\begin{proof}
For the sake of readability, we assume in the proof that only one coefficient $\varepsilon_{i,j}$ is not zero. We denote the corresponding $\partial_j \beta_i$ simply by $\partial_j \beta$.
Let us denote by $F$ the right-hand side of (\ref{EquationComplete}).
From the assumptions and anisotropic Sobolev embeddings, it follows that $F$ belongs to $L^1L^{\frac 65}$.
By Lemma \ref{GainRegulariteLp}, $a$ satisfies an energy inequality which writes, in our case,
\begin{multline*}
 \frac{5}{6} \|a(t)\|_{L^{\frac 65}}^{\frac 65} + \frac{1}{5} \int_0^t \||a(s)|^{-\frac 25} \nabla a(s)\|_{L^2}^2 ds \\
   \leq \int_0^t \int_{\mathbb{R}^3} \left( a^{\frac 65}(s) \alpha(s) + a^{\frac 15}(s) \partial_j \beta(s) A^3_{3,i} a(s) \right) dx ds.
\end{multline*}
By H\"older inequalities, we have
\begin{eqnarray*}
 \int_0^t \int_{\mathbb{R}^3}  a^{\frac 65}(s) \alpha(s)dx ds & \lesssim & \int_0^t \|a^{\frac 35}(s)\|_{L^3}^2 \|\alpha(s)\|_{L^3} ds \\
 & \lesssim & \int_0^t \|a^{\frac 35}(s)\|_{L^2} \||a(s)|^{-\frac 25} \nabla a(s)\|_{L^2} \|\alpha(s)\|_{L^3} ds \\
 & \leq & \frac{1}{10} \int_0^t \||a(s)|^{-\frac 25} \nabla a(s)\|_{L^2}^2  ds + C \int_0^t \|a^{\frac 35}(s)\|_{L^2}^2 \|\alpha(s)\|_{L^3}^2 ds.
\end{eqnarray*}
To bound the other term, we begin by using a trace theorem on $\beta$, which gives $\beta \in L^2L^{\infty}H^1$.
Taking a horizontal derivative, we get $\partial_j \beta \in L^2L^{\infty}L^2$.
We emphasize that such a trace embedding would not be true in general, because $H^{\frac 12}(\mathbb{X})$ does not embed in $L^{\infty}(\mathbb{X})$.
Here, the fact that the multiplicator $\partial_j \beta$ appears as a derivative of some function is crucial.
Regarding the weakly anisotropic term $A^3_{3,i}a$, the assumption on $a$ gives $\partial_3 a \in L^2L^{\frac 65} = L^2L^{\frac 65} L^{\frac 65}$.
Since in two dimensions the space $W^{1,\frac 65}$ embeds into $L^3$, we get $A^3_{3,i} a \in L^2L^{\frac 65} L^3$.
Combining these embeddings with H\"older inequality, we arrive at 
\begin{align*}
 \int_0^t \int_{\mathbb{R}^3} a^{\frac 15}(s) \partial_j \beta(s) A^3_{3,i} a(s) dx ds
 & \leq \int_0^t \|a^{\frac 15}(s)\|_{L^6L^6} \|\partial_j \beta(s)\|_{L^{\infty}L^2} \|A^3_{3,i}a(s)\|_{L^{\frac 65}L^3} ds \\
 & \lesssim \int_0^t \|a^{\frac 15}(s)\|_{L^6} \|\beta(s)\|_{H^{\frac 32}} \|\nabla a(s)\|_{L^{\frac 65}}.
\end{align*}
Using the identity $\nabla a = \left( |a|^{-\frac 25}\nabla a \right) |a|^{\frac 25}$ and H\"older inequality again, we get
$$
  \int_0^t \int_{\mathbb{R}^3} a^{\frac 15}(s) \partial_j \beta(s) A^3_{3,i} a(s) dx ds
  \lesssim \int_0^t \|a^{\frac 35}(s)\|_{L^2} \|\beta(s)\|_{H^{\frac 32}} \||a(s)|^{-\frac 25}\nabla a(s)\|_{L^2}.
$$
Now, Young inequality for real numbers entails, for some constant $C$,
$$
 \int_0^t \int_{\mathbb{R}^3} a^{\frac 15}(s) \partial_j \beta(s) A^3_{3,i} a(s) dx ds 
  \leq  \frac{1}{10} \int_0^t \||a(s)|^{-\frac 25} \nabla a(s)\|_{L^2}^2 ds 
  +  C\int_0^t \|a^{\frac 35}(s)\|_{L^2}^2 \|\beta(s)\|_{H^{\frac 32}}^2 ds.
$$
Cancelling out the gradient terms, we finally get
$$
\|a^{\frac 35}(t)\|_{L^2}^2 \lesssim \int_0^t \|a^{\frac 35}(s)\|_{L^2}^2 (\|\alpha(s)\|_{L^3}^2 + \|\beta(s)\|_{H^{\frac 32}}^2) ds.
$$
Gr\"onwall's inequality then ensures that $\|a^{\frac 35}(t)\|_{L^2}^2 \equiv 0$ and thus that $a \equiv 0$.
\end{proof}

The three following lemmas allow us, in the spirit of of Lemmas \ref{GainRegulariteLp} and \ref{UniciteAnisotropeL6/5}, to enhance the regularity of the solutions to some equations.
As their proofs are akin to those of the aforementioned Lemmad we only sketch them.

\begin{lemma}
Let $\frac 65 \leq p \leq 2$.
Let $v$ be a fixed, divergence free vector field in $L^2H^1$.
Let $a$ be a solution in $L^{\infty}L^{\frac 65} \cap L^2W^{1,\frac 65}$ of the linear system
$$
 \left \{
 \begin{array}{c c}
   \partial_t a + \nabla \cdot (a v) - \Delta a = \alpha a + \sum_{i,j = 1,2} \varepsilon_{i,j} (\partial_j \beta_i) A^3_{3,i} a + F \\ 
  a(0) = 0,
 \end{array}
  \right.
$$
 with $\varepsilon_{i,j} \in \{0,1\}$ for any $1 \leq i,j \leq 2$. 
 We also assume that $\alpha$ lies in $L^2L^3$, that all the $\beta_i$'s are in $L^2H^{\frac 32}$ and that the force $F$ belongs to $L^1L^p \cap L^1L^{\frac 65}$.
 Then $a$ is actually in $L^{\infty}L^p \cap L^2W^{1,p}$.
 \label{GainRegulariteAnisotrope}
\end{lemma}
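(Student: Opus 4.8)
The plan is to run the same regularization scheme as in Lemma \ref{GainRegulariteLp}, but to control the right-hand side by combining the forcing estimate of that lemma with the absorption estimates of Lemma \ref{UniciteAnisotropeL6/5}, now performed at the general exponent $p$ rather than at $p = \frac 65$. First I would mollify the coefficients $v$, $\alpha$, the $\beta_i$ and the force $F$, and let $a_\delta$ be the unique smooth solution of the corresponding regularized Cauchy problem with zero initial data. Lemma \ref{EstimeeEnergieLp} then provides, for each $\delta$, the $L^p$ energy equality, whose right-hand side splits into the three contributions coming from $\alpha^\delta a_\delta$, from $\sum \varepsilon_{i,j}(\partial_j\beta_i^\delta) A^3_{3,i} a_\delta$ and from $F^\delta$.

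Next I would estimate each contribution so as to absorb the dissipation and leave a Grönwall-admissible remainder. For the term in $\alpha$, Hölder's inequality in $L^3 \times L^{3/2}$, the identity $\nabla(|a_\delta|^{p/2}) = \frac p2 |a_\delta|^{(p-2)/2}\nabla a_\delta$ combined with the Sobolev embedding $\dot H^1 \hookrightarrow L^6$, interpolation between $L^p$ and $L^{3p}$, and Young's inequality bound this term by $\frac{1}{10}\int_0^t \||a_\delta|^{(p-2)/2}\nabla a_\delta\|_{L^2}^2\,ds + C\int_0^t \|\alpha\|_{L^3}^2 \|a_\delta\|_{L^p}^p\,ds$. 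For the weakly anisotropic term I would proceed exactly as in Lemma \ref{UniciteAnisotropeL6/5}: the trace estimate turns $\partial_j\beta_i$ into an $L^2 L^\infty_z L^2_{xy}$ factor, the operator $A^3_{3,i} = \partial_3\partial_i\Delta_h^{-1}$ gains one horizontal derivative so that $A^3_{3,i} a_\delta \in L^2 L^p_z L^{r}_{xy}$ through the two-dimensional embedding $W^{1,p}(\mathbb{R}^2)\hookrightarrow L^r$, and an anisotropic Hölder inequality followed by Young's inequality again leaves $\frac{1}{10}\int_0^t \||a_\delta|^{(p-2)/2}\nabla a_\delta\|_{L^2}^2\,ds + C\int_0^t \|\beta_i\|_{H^{3/2}}^2 \|a_\delta\|_{L^p}^p\,ds$. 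The force is controlled by $\int_0^t \|a_\delta\|_{L^p}^{p-1}\|F\|_{L^p}\,ds$ as in Lemma \ref{GainRegulariteLp}. Since $\|\alpha\|_{L^3}^2 + \sum_i\|\beta_i\|_{H^{3/2}}^2 \in L^1(\mathbb{R}_+)$ and $F \in L^1L^p$, cancelling the absorbed gradient terms and invoking Grönwall's inequality furnishes a bound on $a_\delta$ in $L^\infty L^p \cap L^2 W^{1,p}$ uniform in $\delta$, the $L^2 W^{1,p}$ part coming from $\nabla a_\delta = (|a_\delta|^{(p-2)/2}\nabla a_\delta)|a_\delta|^{(2-p)/2}$ and $p \leq 2$, as in Lemma \ref{GainRegulariteLp}.

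Finally I would pass to the limit $\delta \to 0$. The uniform bounds yield weak-$\ast$ and weak accumulation points in $L^\infty L^p$ and $L^2 W^{1,p}$; since every term of the equation is linear in the unknown, the strong convergence of the mollified coefficients $v^\delta, \alpha^\delta, \beta_i^\delta, F^\delta$ already suffices to show that any such accumulation point solves the original system with zero initial data. The far-field decay $|a_\delta(t,x)| \lesssim |x|^{-3}$ of Lemma \ref{DecroissanceChaleur}, uniform in $\delta$ and valid for the gradient as well upon differentiating the Duhamel representation, places this accumulation point in the uniqueness class $L^\infty L^{\frac 65}\cap L^2 W^{1,\frac 65}$, so that Lemma \ref{UniciteAnisotropeL6/5} identifies it with $a$. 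Hence $a$ itself lies in $L^\infty L^p \cap L^2 W^{1,p}$, which is the claim. The main obstacle is the absorption of the weakly anisotropic term: one must check that at every exponent $\frac 65 \leq p \leq 2$ the trace gain on $\partial_j \beta_i$ and the horizontal smoothing of $A^3_{3,i}$ combine to leave exactly half a power of the dissipation $\||a_\delta|^{(p-2)/2}\nabla a_\delta\|_{L^2}^2$, the balance being most delicate near the endpoint $p = 2$, where the embedding $W^{1,p}(\mathbb{R}^2)\hookrightarrow L^r$ degenerates as $r \to \infty$.
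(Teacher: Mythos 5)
Your skeleton (mollify the data, build smooth solutions $a_\delta$, apply the $L^p$ energy equality of Lemma \ref{EstimeeEnergieLp}, absorb the $\alpha$- and $\beta$-contributions exactly as in Lemma \ref{UniciteAnisotropeL6/5}, bound the force term, run Gr\"onwall, pass to the limit, identify the limit with $a$ via Lemma \ref{UniciteAnisotropeL6/5}) is the paper's, but there is a genuine gap in the identification step. You establish uniform bounds only at the exponent $p$, and you place the accumulation point $b$ in the uniqueness class $L^{\infty}L^{\frac 65}\cap L^2W^{1,\frac 65}$ by invoking the far-field decay of Lemma \ref{DecroissanceChaleur}, ``uniform in $\delta$ and valid for the gradient as well.'' This does not hold up: Lemma \ref{DecroissanceChaleur} concerns the pure heat equation with a force that is compactly supported in space and lies in $L^1L^1$, whereas the right-hand side of the equation for $a_\delta$ --- the transport term $\nabla\cdot(a_\delta v^{\delta})$, the terms $\alpha^{\delta}a_\delta$ and $(\partial_j\beta^{\delta})A^3_{3,i}a_\delta$, and $F^{\delta}$ --- is neither compactly supported nor in $L^1L^1$, since none of $v,\alpha,\beta_i,F$ is assumed to have compact support in this lemma. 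Worse, the gradient decay you need to convert $\nabla b\in L^2L^p$ into $\nabla b\in L^2L^{\frac 65}$ appears in no lemma of the paper and is asserted without proof. Since Lebesgue spaces on $\mathbb{R}^3$ are not nested, $b\in L^{\infty}L^p\cap L^2W^{1,p}$ with $p>\frac 65$ does not place $b-a$ within the scope of Lemma \ref{UniciteAnisotropeL6/5}; nor can you fall back on an $L^2_{loc}L^2$ uniqueness statement, because Lemma \ref{UniciteL2L2} allows no right-hand side and Lemma \ref{UniciteL2L2Systeme} requires the multipliers $K$ to be isotropic, which $A^3_{3,i}$ is not. So the crucial identification of the limit with $a$ collapses.

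The repair is the one the hypotheses are pointing to: you never used the assumption $F\in L^1L^{\frac 65}$, and it is in the statement precisely to close this step. The paper runs the energy estimate \emph{twice}: once at the exponent $p$ and once at the exponent $\frac 65$, all the absorption estimates being available at $\frac 65$ since they are those of Lemma \ref{UniciteAnisotropeL6/5}. The uniform bounds at $\frac 65$ place the constructed solution in $L^{\infty}L^{\frac 65}\cap L^2W^{1,\frac 65}$, so that its difference with $a$ satisfies the homogeneous system inside the uniqueness class and Lemma \ref{UniciteAnisotropeL6/5} forces the two to coincide; the bounds at the exponent $p$ then transfer to $a$ and give the conclusion. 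With that substitution your argument matches the paper's; your remaining ingredients (the absorption estimates at general $p$, the Gr\"onwall step, and your caveat about the endpoint $p=2$, where $W^{1,2}(\mathbb{R}^2)$ fails to embed in $L^{\infty}$ --- a delicacy the paper's sketch glosses over as well) are consistent with it.
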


\begin{proof}[Sketch of proof]
 For simplicity, we again assume that only one coefficient $\varepsilon_{i,j}$ is nonzero and write $\partial_j \beta$ instead of $\partial_j \beta_i$.
 We abbreviate the whole right-hand side of the equation by $\tilde{F}$.
 First, we mollify the force fields $\alpha, \partial_j \beta, F$ and the weakly anisotropic operator $A^3_{3,i}$ by some regularizing kernel $\rho_{\delta}$.
 This mollified right-hand side will be denoted by $\tilde{F}^{\delta}$, even though it is not exactly equal to $\rho_{\delta} \ast \tilde{F}$.
 This regularization allows us to build smooth solutions $a_{\delta}$ to the modified equation.
 In a second step, Lemma \ref{EstimeeEnergieLp} gives us estimates in the energy space associated to $L^p$ which are uniform in $\delta$.
 These estimates write, recalling that $a_{\delta}(0) = 0$,
$$
  \frac 1p \|a_{\delta}(t)\|_{L^p}^p + (p-1) \int_0^t \||a_{\delta}(s)|^{\frac{p-2}{2}} \nabla a_{\delta}(s)\|_{L^2}^2 ds 
  = \int_0^t \int_{\mathbb{R}^3} a_{\delta}(s,x)^{p-1}\tilde{F}^{\delta}(s,x) dx ds.
$$
Repeating the computations we did for Lemma \ref{UniciteAnisotropeL6/5} and using H\"older inequality to deal with $F^{\delta}$, we get 
$$
\|a_{\delta}(t)\|_{L^p}^p \lesssim \int_0^t \|a_{\delta}(s)\|_{L^p}^p (\|\alpha^{\delta}(s)\|_{L^3}^2 + \|\beta^{\delta}(s)\|_{H^{\frac 32}}^2) ds
+ \int_0^t \|a_{\delta}(s)\|_{L^p}^{p-1} \|F^{\delta}(s)\|_{L^p} ds.
$$
We detail here how to deal with the new term added by $F^{\delta}$.
Let us denote, for $T > 0$,
$$
 M_{\delta}(T) := \sup_{0 \leq t \leq T} \|a_{\delta}(t)\|_{L^p}.
$$
For $0 \leq t \leq T$, we have
\begin{align*}
 \|a_{\delta}(t)\|_{L^p}^p & \lesssim \int_0^T \|a_{\delta}(s)\|_{L^p}^p (\|\alpha^{\delta}(s)\|_{L^3}^2 + \|\beta^{\delta}(s)\|_{H^{\frac 32}}^2) ds
+ M_{\delta}(T)^{p-1} \int_0^T \|F^{\delta}(s)\|_{L^p} ds \\
& \lesssim \int_0^T \|a_{\delta}(s)\|_{L^p}^p (\|\alpha^{\delta}(s)\|_{L^3}^2 + \|\beta^{\delta}(s)\|_{H^{\frac 32}}^2) ds
+ M_{\delta}(T)^{p-1} \|F\|_{L^1L^p}.
\end{align*}
Taking the supremum over $0 \leq t \leq T$ in the left-hand side gives
$$
  \|M_{\delta}(T)\|_{L^p}^p  \lesssim \int_0^T \|a_{\delta}(s)\|_{L^p}^p (\|\alpha^{\delta}(s)\|_{L^3}^2 + \|\beta^{\delta}(s)\|_{H^{\frac 32}}^2) ds
+ M_{\delta}(T)^{p-1} \|F\|_{L^1L^p}.
$$
Viewing the above equation as an algebraic inequality between positive numbers, we get
$$
  \|M_{\delta}(T)\|_{L^p}  \lesssim \left( \int_0^T \|a_{\delta}(s)\|_{L^p}^p (\|\alpha^{\delta}(s)\|_{L^3}^2 + \|\beta^{\delta}(s)\|_{H^{\frac 32}}^2) ds \right)^{\frac 1p}
+\|F\|_{L^1L^p}.
$$
Taking again the $p-$th power and owing to the inequality $(a+b)^p \lesssim a^p + b^p$, we have
$$
  \|M_{\delta}(T)\|_{L^p}^p  \lesssim \left( \int_0^T \|a_{\delta}(s)\|_{L^p}^p (\|\alpha^{\delta}(s)\|_{L^3}^2 + \|\beta^{\delta}(s)\|_{H^{\frac 32}}^2) ds \right)
+\|F\|_{L^1L^p}^p.
$$
Finally, since $\|a_{\delta}(T)\|_{L^p} \leq M_{\delta}(T)$ for all $T > 0$, Gr\"onwall's inequality entails that, for some constant $C > 0$,
$$
 \|a_{\delta}(T)\|_{L^p} \leq C \|F\|_{L^1L^p} \exp\left(C \int_0^T \|\alpha(s)\|_{L^3}^2 + \|\beta(s)\|_{H^{\frac 32}}^2 ds \right).
$$
Having this bound and its analogue for the exponent $p = \frac 65$, thanks to the assumptions we did on $F$, 
we get a solution of our problem in both the energy spaces associated to $L^{\frac 65}$ and $L^p$.
We conclude that this new solution is actually equal to $a$ thanks to Lemma \ref{UniciteAnisotropeL6/5}.
\end{proof}

\begin{lemma}
 Let $v$ be a fixed, divergence free vector field in $L^2H^1$.
Let $a$ be a $L^{\infty}L^{\frac 65} \cap L^2W^{1,\frac 65}$ solution of the linear system
$$
 \left \{
 \begin{array}{c c}
   \partial_t a + \nabla \cdot (a v) - \Delta a = \alpha a + \sum_{i,j = 1,2} \varepsilon_{i,j} (\partial_j \beta_i) A^3_{3,i} a + F_1 + F_2 \\ 
  a(0) = 0,
 \end{array}
  \right.
$$
 with $\varepsilon_{i,j} \in \{0,1\}$ for any $1 \leq i,j \leq 2$. 
 We assume that $\alpha$ lies in $L^2L^3$ and that all the $\beta_i$'s are in $L^2H^{\frac 32}$.
 The exterior forces $F_1$ and $F_2$ belong respectively to $L^1L^{\frac 32} \cap L^1L^{\frac 65}$ and $L^{\frac 43}L^{\frac 65} \cap L^1L^{\frac 65}$.
 Then $a$ is actually in $L^{\infty}L^{\frac 32} \cap L^2W^{1, \frac 32}$.
 \label{GainRegulariteAnisotropeBis}
\end{lemma}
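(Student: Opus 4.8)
The plan is to follow the approximation-and-uniqueness scheme of Lemma~\ref{GainRegulariteAnisotrope} almost verbatim, the only genuinely new point being the forcing term $F_2$, which does not belong to $L^1L^{\frac 32}$ but merely to $L^{\frac 43}L^{\frac 65}$. First I would mollify the coefficients $\alpha$, $\partial_j\beta_i$, the weakly anisotropic operators $A^3_{3,i}$ and the forces, and build the smooth solutions $a_{\delta}$ of the regularized Cauchy problem. Lemma~\ref{EstimeeEnergieLp} with $p=\frac 32$ then furnishes the energy equality
$$
 \frac 23 \|a_{\delta}(t)\|_{L^{\frac 32}}^{\frac 32} + \frac 12 \int_0^t \||a_{\delta}(s)|^{-\frac 14}\nabla a_{\delta}(s)\|_{L^2}^2 ds = \int_0^t \int_{\mathbb{R}^3} a_{\delta}^{\frac 12}(s)\, \tilde{F}^{\delta}(s)\, dx\, ds,
$$
where $\tilde{F}^{\delta}$ is the mollified right-hand side. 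The contributions of $\alpha a_{\delta}$ and of the weakly anisotropic $\beta$-term are treated exactly as in Lemmas~\ref{UniciteAnisotropeL6/5} and~\ref{GainRegulariteAnisotrope}: after absorbing part of the dissipation, they produce the Grönwall factor $\|a_{\delta}\|_{L^{\frac 32}}^{\frac 32}(\|\alpha^{\delta}\|_{L^3}^2+\|\beta^{\delta}\|_{H^{\frac 32}}^2)$. The part $F_1\in L^1L^{\frac 32}$ is likewise dealt with by the supremum trick of Lemma~\ref{GainRegulariteAnisotrope}, bounding $\int_0^t a_{\delta}^{\frac 12}F_1^{\delta}$ by $M_{\delta}(T)^{\frac 12}\|F_1\|_{L^1L^{\frac 32}}$, where $M_{\delta}(T):=\sup_{[0,T]}\|a_{\delta}\|_{L^{\frac 32}}$.

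The crux, and the main obstacle, is the term coming from $F_2$. Writing $\int_{\mathbb{R}^3} a_{\delta}^{\frac 12}F_2^{\delta}\,dx\le \|a_{\delta}\|_{L^3}^{\frac 12}\|F_2^{\delta}\|_{L^{\frac 65}}$ by Hölder, I would interpolate $\|a_{\delta}\|_{L^3}\le \|a_{\delta}\|_{L^{\frac 32}}^{\frac 14}\|a_{\delta}\|_{L^{\frac 92}}^{\frac 34}$ and use the Sobolev embedding $\dot{H}^1\hookrightarrow L^6$ applied to $a_{\delta}^{\frac 34}$, whose gradient is $\frac 34 |a_{\delta}|^{-\frac 14}\nabla a_{\delta}$, to get $\|a_{\delta}\|_{L^{\frac 92}}^{\frac 34}\lesssim \||a_{\delta}|^{-\frac 14}\nabla a_{\delta}\|_{L^2}$. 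This yields $\|a_{\delta}\|_{L^3}^{\frac 12}\lesssim \|a_{\delta}\|_{L^{\frac 32}}^{\frac 18}\||a_{\delta}|^{-\frac 14}\nabla a_{\delta}\|_{L^2}^{\frac 12}$, and a Young inequality with exponents $(4,\frac 43)$ absorbs a small multiple of the dissipation, leaving $\int_0^t \|a_{\delta}\|_{L^{\frac 32}}^{\frac 16}\|F_2^{\delta}\|_{L^{\frac 65}}^{\frac 43}ds\le M_{\delta}(T)^{\frac 16}\|F_2\|_{L^{\frac 43}L^{\frac 65}}^{\frac 43}$. It is precisely here that the hypothesis $F_2\in L^{\frac 43}L^{\frac 65}$ is used in full: the time exponent $\frac 43$ is exactly the one manufactured by the Young step, which is what makes this otherwise forbidden force integrable.

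Collecting everything and taking the supremum over $[0,T]$ produces an algebraic inequality of the shape
$$
 M_{\delta}(T)^{\frac 32}\lesssim \int_0^T M_{\delta}(s)^{\frac 32}\bigl(\|\alpha^{\delta}\|_{L^3}^2+\|\beta^{\delta}\|_{H^{\frac 32}}^2\bigr)ds + M_{\delta}(T)^{\frac 12}\|F_1\|_{L^1L^{\frac 32}}+M_{\delta}(T)^{\frac 16}\|F_2\|_{L^{\frac 43}L^{\frac 65}}^{\frac 43}.
$$
Since the exponents $\frac 12$ and $\frac 16$ are strictly below $\frac 32$, two further Young inequalities absorb the last two terms into $M_{\delta}(T)^{\frac 32}$ up to harmless powers of $\|F_1\|$ and $\|F_2\|$, and Grönwall's lemma then gives a bound on $a_{\delta}$ in $L^{\infty}L^{\frac 32}\cap L^2W^{1,\frac 32}$ uniform in $\delta$. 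I would run the same computation at the exponent $p=\frac 65$, where both $F_1$ and $F_2$ lie in $L^1L^{\frac 65}$ so that no absorption is needed, to obtain simultaneously a uniform bound in the energy space associated to $L^{\frac 65}$.

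Finally, passing to weak-$\ast$ limits in both energy spaces produces a limit $\tilde{a}$ solving the original linear system in the $L^{\frac 65}$ class. Because the equation is linear and the data vanish, the difference $a-\tilde{a}$ solves the homogeneous anisotropic equation of Lemma~\ref{UniciteAnisotropeL6/5} with zero initial value and lies in $L^{\infty}L^{\frac 65}\cap L^2W^{1,\frac 65}$; that uniqueness lemma forces $a=\tilde{a}$, whence $a\in L^{\infty}L^{\frac 32}\cap L^2W^{1,\frac 32}$ as claimed. The only delicate step in the whole argument is the absorption of $F_2$ described above; every other ingredient is a direct transcription of the earlier lemmas.
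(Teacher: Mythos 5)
Your proposal is correct and follows essentially the same route as the paper: mollify, run the $L^{\frac 32}$ (and $L^{\frac 65}$) energy estimates, treat $F_2$ by H\"older against $\|a_\delta^{\frac 12}\|_{L^6}$, a Sobolev/interpolation bound (your $\dot H^1 \hookrightarrow L^6$ applied to $a_\delta^{\frac 34}$ is exactly equivalent, since $\|a_\delta\|_{L^{\frac 32}}^{\frac 18} = \|a_\delta^{\frac 34}\|_{L^2}^{\frac 16}$, to the paper's $H^{\frac 34}\hookrightarrow L^4$ step), and a Young inequality absorbing the dissipation and manufacturing the $\|F_2\|_{L^{\frac 65}}^{\frac 43}$ time-integrability, before concluding by Gr\"onwall and Lemma \ref{UniciteAnisotropeL6/5}. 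The only difference is cosmetic bookkeeping: you absorb the forcing contributions via the supremum $M_\delta(T)$ and algebraic Young inequalities, while the paper folds them directly into a Gr\"onwall term $\|a_\delta^{\frac 34}\|_{L^2}^2\|F^\delta\|_{L^{\frac 65}}^{\frac 43}$ plus a constant.
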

\begin{proof}[Sketch of proof]
 We essentially have to repeat the proof of Lemma \ref{GainRegulariteAnisotrope}, apart from estimating the term coming from $F_2$.
 Keeping the same notations as in the last proof, we have
$$
  \int_0^t \int_{\mathbb{R}^3} a_{\delta}(s,x)^{\frac 12} F^{\delta}(s,x) dx ds
  \leq \int_0^t \|F^{\delta}(s)\|_{L^{\frac 65}} \|a_{\delta}(s)^{\frac 12}\|_{L^6} ds.
$$
Using the identity $ \|a_{\delta}(s)^{\frac 12}\|_{L^6} =  \|a_{\delta}(s)^{\frac 34}\|_{L^4}^{\frac 23}$ and the Sobolev embedding $H^{\frac 34} \hookrightarrow L^4$, we get
$$
  \int_0^t \|F^{\delta}(s)\|_{L^{\frac 65}} \|a_{\delta}(s)^{\frac 12}\|_{L^6} ds
  \lesssim \int_0^t \|F^{\delta}(s)\|_{L^{\frac 65}} \|a_{\delta}(s)^{\frac 34}\|_{L^2}^{\frac 16} \||a(s)|^{-\frac 14} \nabla a(s)\|_{L^2}^{\frac 12} ds.
$$
 Now, Young inequality gives us, for some constant $C > 0$,
  \begin{multline*}
 \int_0^t \|F^{\delta}(s)\|_{L^{\frac 65}} \|a_{\delta}(s)^{\frac 34}\|_{L^2}^{\frac 16} \||a(s)|^{-\frac 14} \nabla a(s)\|_{L^2}^{\frac 12} ds
 \leq \frac{1}{10} \int_0^t \||a(s)|^{-\frac 14} \nabla a(s)\|_{L^2}^2 ds \\
 + \int_0^t \|a_{\delta}(s)^{\frac 34}\|_{L^2}^2 \|F^{\delta}(s)\|_{L^{\frac 65}}^{\frac 43} ds 
 + C \int_0^t \|F^{\delta}(s)\|_{L^{\frac 65}}^{\frac 43} ds.
 \end{multline*}
 Plugging this finaly bound in the energy estimate performed in $L^{\frac 32}$, the rest of the proof is the same as for Lemma \ref{GainRegulariteAnisotrope}.
\end{proof}

\begin{lemma}
Let $v$ be a fixed, divergence free vector field in $L^2H^1$.
  Let $A$ be a matrix-valued function in $L^2L^3$.
 Let $K$ be a matrix whose coefficients are homogeneous, isotropic Fourier multipliers of order $0$.
 Let $F_1$ be a fixed function in $L^1L^{\frac 32} \cap L^1L^{\frac 65}$ and $F_2$ be fixed in $L^{\frac 43}L^{\frac 65} \cap L^1L^{\frac 65}$.
 Let $a$ be a solution in $(L^2L^2)^2$ of the equation
 
$$
  \left\{ 
  \begin{array}{c c}
   \partial_t a + \nabla \cdot (a \otimes v) - \Delta a = A K a + F_1 + F_2  \\
   a(0) = 0.
  \end{array}
  \right.
$$
Then $a$ is actually in $L^{\infty} L^{\frac 32} \cap L^2W^{1, \frac 32}$.
\label{GainRegulariteSystemeBis}
\end{lemma}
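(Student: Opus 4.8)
The plan is to mimic the two-tiered bootstrap already used for Lemmas \ref{GainRegulariteSysteme} and \ref{GainRegulariteAnisotropeBis}, the only novelty being that the anisotropic forcing of \ref{GainRegulariteAnisotropeBis} is replaced by the \emph{isotropic} term $AKa$. This term is in fact easier to handle, since $K$ is bounded on every $L^r$, $1 < r < \infty$, by the H\"ormander--Mikhlin theorem, so that $AKa$ plays exactly the role of the term $\alpha a$ in Lemma \ref{GainRegulariteAnisotropeBis} and is estimated verbatim as the term $AKa$ in Lemma \ref{UniciteL2L2Systeme}.

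First I would secure a baseline regularity in the energy space associated to $L^{\frac 65}$. As $a \in L^2L^2$, $A \in L^2L^3$ and $K$ is bounded on $L^2$, H\"older's inequality (with $\frac 13 + \frac 12 = \frac 56$ in space and $\frac 12 + \frac 12 = 1$ in time) gives $AKa \in L^1L^{\frac 65}$; together with $F_1, F_2 \in L^1L^{\frac 65}$, the whole right-hand side lies in $L^1L^{\frac 65}$. Applying Lemma \ref{GainRegulariteSysteme} with $p = \frac 65$ and force $F_1 + F_2$ then yields $a \in L^{\infty}L^{\frac 65} \cap L^2W^{1,\frac 65}$. Note that one \emph{cannot} simply invoke Lemma \ref{GainRegulariteSysteme} again with $p = \frac 32$, precisely because $F_2$ fails to belong to $L^1L^{\frac 32}$; this is what forces the refined argument below.

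Second, I would bootstrap from $L^{\frac 65}$ to $L^{\frac 32}$ by a mollified energy estimate, following Lemma \ref{GainRegulariteAnisotropeBis}. Regularizing $v, A, K, F_1, F_2$ produces smooth solutions $a_{\delta}$ to which Lemma \ref{EstimeeEnergieLp} applies with $p = \frac 32$, giving
$$\frac 23\|a_{\delta}(t)\|_{L^{\frac 32}}^{\frac 32} + \frac 12\int_0^t\||a_{\delta}(s)|^{-\frac 14}\nabla a_{\delta}(s)\|_{L^2}^2\,ds = \int_0^t\int_{\mathbb{R}^3} a_{\delta}(s)^{\frac 12}\bigl(A^{\delta} K^{\delta} a_{\delta} + F_1^{\delta} + F_2^{\delta}\bigr)(s)\,dx\,ds.$$
The linear term $A^{\delta}K^{\delta}a_{\delta}$ is absorbed, up to a fraction of the gradient term, into $C\int_0^t\|A(s)\|_{L^3}^2\|a_{\delta}(s)\|_{L^{\frac 32}}^{\frac 32}\,ds$, exactly as in Lemma \ref{UniciteL2L2Systeme}. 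The force $F_1 \in L^1L^{\frac 32}$ is handled directly by H\"older, producing the subcritical quantity $M_{\delta}(T)^{\frac 12}\|F_1\|_{L^1L^{\frac 32}}$ with $M_{\delta}(T) := \sup_{[0,T]}\|a_{\delta}\|_{L^{\frac 32}}$, which is closed off by the algebraic-inequality trick of Lemma \ref{GainRegulariteAnisotrope}. The delicate term is $F_2 \in L^{\frac 43}L^{\frac 65}$: here I would reuse the device of Lemma \ref{GainRegulariteAnisotropeBis}, writing $\int a_{\delta}^{\frac 12} F_2^{\delta} \le \int\|F_2^{\delta}\|_{L^{\frac 65}}\|a_{\delta}^{\frac 12}\|_{L^6}$, using $\|a_{\delta}^{\frac 12}\|_{L^6} = \|a_{\delta}^{\frac 34}\|_{L^4}^{\frac 23}$ together with the embedding $H^{\frac 34}\hookrightarrow L^4$, and absorbing a fraction of $\||a_{\delta}|^{-\frac 14}\nabla a_{\delta}\|_{L^2}^2$ via Young's inequality, so as to leave the Gr\"onwall weight $\int\|a_{\delta}^{\frac 34}\|_{L^2}^2\|F_2\|_{L^{\frac 65}}^{\frac 43}$ and the finite remainder $C\int\|F_2\|_{L^{\frac 65}}^{\frac 43}$.

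The main obstacle is exactly this last term: because $F_2 \notin L^1L^{\frac 32}$, it cannot be estimated directly and must be absorbed through the parabolic gain encoded in the gradient term, which is the reason both integrability hypotheses on $F_2$ are imposed. Once every term is controlled, Gr\"onwall's inequality with weight $\|A\|_{L^3}^2 + \|F_2\|_{L^{\frac 65}}^{\frac 43} \in L^1(0,T)$ yields bounds on $a_{\delta}$ in $L^{\infty}L^{\frac 32} \cap L^2W^{1,\frac 32}$ uniform in $\delta$. Passing to the weak-$*$ limit produces a solution $b$ of the same system in this energy space; since $b \in L^{\infty}L^{\frac 32} \cap L^2W^{1,\frac 32} \hookrightarrow L^2_{loc}L^2$ and $a \in L^2L^2$, the difference $a - b$ solves the homogeneous system $\partial_t(a-b) + \nabla \cdot ((a-b) \otimes v) - \Delta(a-b) = AK(a-b)$ with zero initial datum in $L^2L^2$, whence Lemma \ref{UniciteL2L2Systeme} forces $a = b$ and therefore $a \in L^{\infty}L^{\frac 32} \cap L^2W^{1,\frac 32}$.
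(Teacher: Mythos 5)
Your proposal is correct and follows essentially the same route as the paper, whose entire proof of Lemma \ref{GainRegulariteSystemeBis} is the remark that it ``combines the proofs of Lemmas \ref{GainRegulariteLp}, \ref{GainRegulariteAnisotrope} and \ref{GainRegulariteAnisotropeBis}''. You have simply carried out that combination explicitly: baseline regularity in the $L^{\frac 65}$ energy space, a mollified $L^{\frac 32}$ energy estimate with the isotropic term absorbed as in Lemma \ref{UniciteL2L2Systeme}, the $M_{\delta}$ trick for $F_1$, the $H^{\frac 34}\hookrightarrow L^4$ device for $F_2$, Gr\"onwall, and identification of the limit via the uniqueness lemma.
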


\begin{proof}
 This lemma essentially combines the proofs of Lemmas \ref{GainRegulariteLp}, \ref{GainRegulariteAnisotrope} and \ref{GainRegulariteAnisotropeBis}, so we shall not repeat them.
\end{proof}

\begin{lemma}
 Let $v_0$ be a divergence free vector field in $L^{\frac 32} \cap L^2$. Then any Leray solution of the Navier-Stokes system
$$
 \left \{
 \begin{array}{c c}
   \partial_t v + \nabla \cdot (v \otimes v) - \Delta v = - \nabla p \\
   \text{div } v = 0 \\
  v(0) = v_0
 \end{array}
  \right.
$$
 belongs, in addition to the classical energy space $L^{\infty}L^2 \cap L^2H^1$, to $L^{\infty}L^{\frac 32} \cap L^2W^{1,\frac 32}$.
\end{lemma}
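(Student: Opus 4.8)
The plan is to set aside the energy-estimate machinery of the previous lemmas and argue directly on the Duhamel (mild) formulation, since the pressure turns out to be the genuine obstruction to a naive $L^{\frac 32}$ energy estimate. First I would record the regularity already supplied by the Leray bound $v \in L^{\infty}L^2 \cap L^2 H^1$: the Sobolev embedding $H^1 \hookrightarrow L^6$ gives $v \in L^2 L^6$, and interpolating with $L^{\infty}L^2$ gives $v \in L^4 L^3$. Hence the quadratic term satisfies $v \otimes v \in L^1 L^3 \cap L^2 L^{\frac 32}$, the first inclusion coming from $\|v\|_{L^6}^2 \in L^1_t$ and the second from $\|v\|_{L^3}^2 \in L^2_t$. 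Since $-\Delta p = \partial_i \partial_j (v^i v^j)$, I write $p = R_i R_j (v^i v^j)$ and hence $\nabla p = \mathrm{div}\,T$, where the tensor $T$ is obtained from $v \otimes v$ by zero-order isotropic Fourier multipliers; by H\"ormander--Mikhlin, $T \in L^1 L^3 \cap L^2 L^{\frac 32}$ as well. The equation then reads $\partial_t v - \Delta v = \mathrm{div}\,G$ with $G := -(v\otimes v) - T \in L^1 L^3 \cap L^2 L^{\frac 32}$, and I split $v = e^{t\Delta}v_0 + w$ with $w(0) = 0$.

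For the $L^{\infty}L^{\frac 32}$ bound the linear part is immediate, since $\|e^{t\Delta}v_0\|_{L^{\frac 32}} \leq \|v_0\|_{L^{\frac 32}}$. For $w$ I would exploit the heat smoothing estimate $\|e^{\tau\Delta}\mathrm{div}\,g\|_{L^{\frac 32}} \lesssim \|g\|_{L^3}$, in which the half-derivative cost of $\mathrm{div}$ is \emph{exactly} compensated by the $L^3 \to L^{\frac 32}$ gain, so that the time kernel is $\tau^0$. Integrating against $G \in L^1 L^3$ then yields $\sup_t \|w(t)\|_{L^{\frac 32}} \lesssim \|G\|_{L^1 L^3} < \infty$. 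The decisive point is that by measuring the nonlinearity in $L^1_t L^3_x$ rather than in the scaling-critical $L^2_t L^{\frac 32}_x$, the troublesome time singularity disappears entirely.

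The $L^2 W^{1,\frac 32}$ bound is the hard part, and it is genuinely scaling-critical: the target sits at the same parabolic scaling as $G$, so crude Young or Hardy--Littlewood--Sobolev convolution bounds in time land only on the weak space $L^{2,\infty}_t L^{\frac 32}_x$. The linear contribution is still harmless, because $v_0 \in L^{\frac 32}$ embeds into the Besov space $\dot B^0_{\frac 32,2}$ (recall $L^q \hookrightarrow \dot B^0_{q,2}$ for $1 < q \leq 2$), which is precisely the statement $\nabla e^{t\Delta}v_0 \in L^2 L^{\frac 32}$. For $\nabla w$ I would invoke parabolic maximal regularity: on the Fourier side $\widehat{\nabla w} = \frac{-\xi \otimes \xi}{i\tau + |\xi|^2}\,\hat G$, and the multiplier $\xi \otimes \xi/(i\tau + |\xi|^2)$ is homogeneous of parabolic degree $0$ and smooth off the origin, hence a H\"ormander--Mikhlin multiplier for the anisotropic dilations $(\tau,\xi) \mapsto (\lambda^2 \tau, \lambda \xi)$. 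It is therefore bounded on $L^2_t L^{\frac 32}_x$ (both exponents lying strictly between $1$ and $\infty$), and applying it to $G \in L^2 L^{\frac 32}$ gives $\nabla w \in L^2 L^{\frac 32}$. Combining the two contributions yields $v \in L^{\infty}L^{\frac 32} \cap L^2 W^{1,\frac 32}$.

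I expect the main obstacle to be exactly this last, scaling-critical, gradient estimate, together with the low regularity of the pressure. Indeed $\nabla p$ lies only in $L^2 W^{-1,\frac 32}$ and never in $L^1 L^{\frac 32}$, so Lemma \ref{GainRegulariteLp} cannot be applied with force $-\nabla p$; worse, a direct $L^{\frac 32}$ energy estimate fails, because testing $-\nabla p$ against $v|v|^{-\frac 12}$ produces, after integration by parts, a term controlled only by $\int |p|^2 |v|^{-\frac 12}$, whose negative weight is not integrable near the zero set of $v$. The maximal-regularity route circumvents this by never pairing a derivative of the pressure against a negative power of $v$. Finally, that the Leray solution genuinely coincides with $e^{t\Delta}v_0 + w$ is not an extra hypothesis: since $v \otimes v \in L^1_{loc}$, the distributional equation forces the Duhamel identity, and uniqueness in $L^2_{loc}L^2$ (Lemma \ref{UniciteL2L2}) could alternatively be used to identify the two.
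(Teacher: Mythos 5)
Your proposal has a genuine gap at its first main step: the claimed smoothing estimate $\|e^{\tau\Delta}\mathrm{div}\,g\|_{L^{\frac 32}} \lesssim \|g\|_{L^3}$ is false. No nonzero translation-invariant (convolution) operator on $\mathbb{R}^3$ can be bounded from $L^p$ to $L^q$ with $q<p$; this is H\"ormander's classical theorem, and one sees it concretely by taking $g$ to be a sum of $N$ copies of a fixed bump translated very far apart: for fixed $\tau$ the right-hand side grows like $N^{\frac 13}$ while the left-hand side grows like $N^{\frac 23}$. The heat semigroup gains integrability only \emph{upwards} ($L^p \to L^q$ with $q \geq p$, at a cost $\tau^{-\frac 32(\frac 1p-\frac 1q)}$); it cannot trade a derivative against a \emph{decrease} of the Lebesgue exponent. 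That the power of $\tau$ comes out to be $0$ only means your inequality is scaling-consistent, not that it holds. Without this estimate your Duhamel argument produces no $L^{\infty}L^{\frac 32}$ bound at all, and, as you observe yourself, measuring $G$ in $L^2L^{\frac 32}$ instead leads to the convolution $t^{-\frac 12} \ast L^2_t$, which fails exactly at the $L^\infty_t$ endpoint.

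The deeper issue is that your diagnosis of the difficulty is mistaken: the pressure is not an obstruction, and the force \emph{can} be placed in $L^1L^{\frac 32}$. Since $v$ is divergence free, $\nabla\cdot(v\otimes v) = v\cdot\nabla v$, and H\"older with the Leray bounds $v \in L^2L^6$ (Sobolev) and $\nabla v \in L^2L^2$ gives $v\cdot\nabla v \in L^1L^{\frac 32}$ — the point is to put the derivative on one factor rather than keep it outside the product, which is what forced you into the space $\mathrm{div}\,(L^1L^3 \cap L^2L^{\frac 32})$. This is precisely the paper's proof: applying the Leray projection $\mathbb{P}$, which is bounded on $L^{\frac 32}$ by H\"ormander--Mikhlin, kills the pressure and shows that $v$ solves the heat equation $\partial_t v - \Delta v = -\mathbb{P}(v\cdot\nabla v)$ with force in $L^1L^{\frac 32}$ and data $v_0 \in L^{\frac 32}$, whence the $L^{\frac 32}$ energy estimate (Lemma \ref{EstimeeEnergieLp}, used as in Lemma \ref{GainRegulariteLp}) yields $v \in L^{\infty}L^{\frac 32} \cap L^2W^{1,\frac 32}$ in one stroke. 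For the same reason your assertion that $\nabla p$ is never in $L^1L^{\frac 32}$ is wrong: $\nabla p = -(\mathrm{Id}-\mathbb{P})(v\cdot\nabla v) \in L^1L^{\frac 32}$, so even a direct energy estimate keeping the pressure would not meet the difficulty you describe. Your maximal-regularity argument for the $L^2W^{1,\frac 32}$ half (and the Besov characterization $L^{\frac 32} \hookrightarrow \dot B^0_{\frac 32,2}$ for the free evolution) is essentially sound, but it is far heavier than needed and cannot compensate for the missing $L^{\infty}L^{\frac 32}$ bound.
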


\begin{proof}
 Let $v$ be a Leray solution of the Navier-Stokes system, which exists by classical approximation arguments. 
 Then, letting $F := - \mathbb{P} \nabla \cdot (v \otimes v) = - \mathbb{P} (v \cdot \nabla v)$ where $\mathbb{P}$ denotes the Leray projection on divergence free vector fields, $v$ solves the heat equation
$$
 \left \{
 \begin{array}{c c}
   \partial_t v - \Delta v = F \\
  v(0) = v_0.
 \end{array}
  \right.
$$
 That $F$ belongs to $L^1L^{\frac 32}$ is easily obtained by the continuity of $\mathbb{P}$ on $L^{\frac 32}$.
The result follows from an energy estimate in $L^{\frac 32}$.
\label{IntegrabiliteSolutionLeray}
\end{proof}

\section{Case of the torus}

Let us now state the first main theorem of this paper.

\begin{theorem}
 Let $u$ be a Leray solution of the Navier-Stokes equations set in $\mathbb{R}_+ \times \mathbb{T}^3$ 
$$
  \left\{ 
  \begin{array}{c c}
   \partial_t u + \nabla \cdot (u \otimes u) - \Delta u = - \nabla p \\
   u(0) = u_0
  \end{array}
  \right.
$$
with initial data $u_0$ in $L^2(\mathbb{T}^3)$.
Assume the existence of a time interval $]T_1,T_2[$ such that its third component $u^3$ satisfies
$$
  u^3 \in L^2(]T_1,T_2[, W^{2,\frac 32}(\mathbb{T}^3)).
$$
Then $u$ is actually smooth in time and space on $]T_1,T_2[ \times \mathbb{T}^3$ and satisfies the Navier-Stokes equations in the classical, strong sense.
\end{theorem}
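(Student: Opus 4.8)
The plan is to run the bootstrap sketched in Section 2, lifting the regularity of the three components of the vorticity $\Omega = \mathrm{rot}\,u = (\omega_1,\omega_2,\omega_3)$ one after another until the velocity lands in a Serrin class. Being a Leray solution, $u$ lies in $L^2 H^1$, so $\Omega \in L^2 L^2$ and $\omega := \omega_3$ solves (\ref{Equation_omega}) in the distributional sense with transport field $v = u$. Since the extra hypothesis on $u^3$ holds only on $]T_1,T_2[$, which does not contain $0$ in its closure, I would fix a smooth temporal cut-off $\chi$ supported in $]T_1,T_2[$ and identically $1$ on some $]T_1',T_2'[ \Subset ]T_1,T_2[$, and work with $\tilde\omega := \chi\,\omega$, which solves the same equation with vanishing initial data and an additional forcing $\chi'\omega$. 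On the torus $L^2 L^2 \hookrightarrow L^1 L^{6/5}$, so this cut-off term is harmless, and the cut-off also renders all coefficients globally defined in the spaces required below. It suffices to prove smoothness on each $]T_1',T_2'[ \times \mathbb{T}^3$. A useful simplification here is that the compactness of $\mathbb{T}^3$ makes the decay-at-infinity tools (Lemmas \ref{DecroissanceChaleur} and \ref{InjectionSobolevCompacte}) unnecessary, so the gain-of-regularity lemmas apply directly, their Aubin--Lions step being immediate.

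I would first lift $\omega_3$ in two stages. At the initial stage I treat the whole force $\Omega\cdot\nabla u^3$ as \emph{given}: with $\Omega \in L^2 L^2$ and $\nabla u^3 \in L^2 W^{1,3/2}\hookrightarrow L^2 L^3$, this product lies in $L^1 L^{6/5}$, so Lemma \ref{GainRegulariteLp} with $p=6/5$ upgrades $\tilde\omega$ from $L^2 L^2$ to $L^\infty L^{6/5}\cap L^2 W^{1,6/5}$ (the better solution is built by mollification, and matched to $\tilde\omega$ through the uniqueness Lemma \ref{UniciteL2L2}). With $\tilde\omega$ now carrying one horizontal derivative, I would perform the div-curl decomposition of $u^h=(u^1,u^2)$ with respect to the third variable and rewrite $\Omega\cdot\nabla u^3$ in the anisotropic form of Lemma \ref{UniciteAnisotropeL6/5}: the linear part is $\alpha\,\tilde\omega + \sum \varepsilon_{i,j}(\partial_j\beta_i)A^3_{3,i}\tilde\omega$ with $\alpha = \partial_3 u^3 \in L^2 L^3$ and $\beta_i = u^3 \in L^2 H^{3/2}$, while the terms quadratic in $u^3$ form a source in $L^1 L^{3/2}$, split into the $F_1,F_2$ of Lemma \ref{GainRegulariteAnisotropeBis}. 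Applying Lemma \ref{GainRegulariteAnisotropeBis} and matching the produced solution to $\tilde\omega$ through Lemma \ref{UniciteAnisotropeL6/5} yields $\omega_3 \in L^\infty L^{3/2}\cap L^2 W^{1,3/2}$ on $]T_1',T_2'[$.

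I would next lift $\omega_1,\omega_2$. Now that both $u^3$ and $\omega_3$ lie in $L^2 W^{1,3/2}$, the H\"ormander--Mikhlin theorem gives the horizontal Jacobian $\nabla_h u^h$ the same regularity. Performing the \emph{adaptive} div-curl decomposition --- $u^1$ with respect to the second variable, $u^2$ with respect to the first --- recasts the truncated system (\ref{Systeme_omega}) for $(\omega_1,\omega_2)$ into the form $\partial_t a + \nabla\cdot(a\otimes v) - \Delta a = AKa + F_1 + F_2$ of Lemma \ref{GainRegulariteSystemeBis}, with $A \in L^2 L^3$, an isotropic order-zero coupling $K$ (built from operators such as $\partial_3^2\Delta_{(1,3)}^{-1}$), and source terms $F_1 \in L^1 L^{3/2}\cap L^1 L^{6/5}$, $F_2 \in L^{4/3}L^{6/5}\cap L^1 L^{6/5}$ gathering all the $\omega_3$- and $u^3$-dependent contributions. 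Because the coupling is of order zero it already closes in $L^2 L^2$, so Lemma \ref{GainRegulariteSystemeBis} (with uniqueness supplied by Lemma \ref{UniciteL2L2Systeme}) directly gives $(\omega_1,\omega_2) \in L^\infty L^{3/2}\cap L^2 W^{1,3/2}$, without the preliminary stage needed for $\omega_3$.

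Collecting the three components, $\Omega \in L^\infty L^{3/2}\cap L^2 W^{1,3/2}$ on $]T_1',T_2'[$. Since $W^{1,3/2}(\mathbb{T}^3)\hookrightarrow L^3$, interpolating in space between $L^{3/2}$ and $L^3$ to reach $L^2$ and then in time yields $\Omega \in L^4 L^2$; by the Biot--Savart law $\nabla u \in L^4 L^2$, equivalently $u \in L^4 L^6$ through $\dot{H}^1\hookrightarrow L^6$. This is the endpoint Serrin/Beir\~ao da Veiga regularity class (here $\frac{2}{4}+\frac{3}{2}=2$ on $\nabla u$), covered by the references of the introduction, e.g. \cite{BeiraoDaVeiga} and \cite{Serrin}, so $u$ is smooth on $]T_1',T_2'[\times\mathbb{T}^3$; letting $T_1'\downarrow T_1$ and $T_2'\uparrow T_2$ concludes. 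I expect the main obstacle to be conceptual rather than computational: at each stage we only have a \emph{uniqueness} statement in a class where no solution can be constructed and no time-uniform $L^p$ bound is available, existence in the better class being furnished from outside by the Navier--Stokes structure through the gain-of-regularity lemmas, with uniqueness merely transporting that regularity onto the true vorticity. The sharpest technical point inside those lemmas is the anisotropic bookkeeping forced by the weakly anisotropic operators $A^3_{3,i}$; controlling $(\partial_j\beta)A^3_{3,i}a$ rests on the trace bound $\partial_j\beta \in L^2 L^\infty L^2$, available only because $\partial_j\beta = \partial_j u^3$ is itself a derivative, $H^{1/2}$ failing to embed into $L^\infty$.
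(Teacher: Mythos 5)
Your proposal is correct and follows essentially the same route as the paper's own proof: the same two-stage lift of $\omega_3$ (Lemma \ref{GainRegulariteLp} first with the force treated as given, then the anisotropic div-curl rewriting handled by the gain lemmas with uniqueness from Lemma \ref{UniciteAnisotropeL6/5}), the same adaptive div-curl recasting of the $(\omega_1,\omega_2)$ system into an isotropic zero-order coupled form closed directly at the $L^2L^2$ level, and the same $\Omega \in L^4L^2$, $u \in L^4H^1$ endgame. The only cosmetic deviations are that you invoke the ``Bis'' lemmas (\ref{GainRegulariteAnisotropeBis}, \ref{GainRegulariteSystemeBis}) where the paper can use Lemmas \ref{GainRegulariteAnisotrope} and \ref{GainRegulariteSysteme} since no $F_2$-type force arises on the torus, and that the paper makes explicit the second cutoff $\varphi$ with $\operatorname{supp}\chi \subset \{\varphi \equiv 1\}$ to truncate the coefficients $u^3$, $\partial_i u^j$ into globally defined elements of $L^2H^{\frac 32}$ and $L^2L^3$ --- the point you only allude to when saying the cutoff ``renders all coefficients globally defined.''
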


Let $\chi, \varphi$ be smooths cutoffs in time, localised inside $]T_1,T_2[$.
Let $\omega$ be the third component of $\Omega := \text{rot } v$.
Denote $\chi \omega$ by $\omega'$.
The equation satisfied by $\omega'$ writes
$$
 \partial_t  \omega' + \nabla \cdot (\omega' u) - \Delta \omega' = \chi \Omega \cdot \nabla u^3 + \omega \partial_t \chi.
$$

Denote $F := \chi \Omega \cdot \nabla u^3 + \omega \partial_t \chi$. 
As $u$ is a Leray solution of the Navier-Stokes equations, we know that $\Omega$ belongs to $L^2L^2$.
Thus, $\omega'$ also lies in $L^2L^2$.
On the other hand, the assumption made on $u^3$ tells us in particular that $\Omega \cdot \nabla u^3$ belongs to $L^1L^{\frac 65}$.
That $\omega \partial_t \chi$ also belongs to $L^1L^{\frac 65}$ follows directly from the compactness of $\mathbb{T}^3$.

We are now in position to apply Lemma \ref{GainRegulariteLp}, which tells us that $\omega'$ is actually in $L^{\infty}L^{\frac 65} \cap L^2W^{1,\frac 65}$.
Let us now expand the quantity $\Omega \cdot \nabla u^3$ in terms of $\omega$ and $u^3$. We have, after some simplifications,
$$
  \Omega \cdot \nabla u^3 = \partial_3 u^3 \omega  + \partial_2 u^3 \partial_3 u^1 - \partial_1 u^3 \partial_3 u^2.
$$
Performing a div-curl decomposition of $u^1$ and $u^2$ in terms of $\partial_3 u^3$ and $\omega$, we have
\begin{align*}
  \Omega \cdot \nabla u^3 & =  \partial_3 u^3 \omega  + \partial_2 u^3 (- A_{1,3}^3 \partial_3 u^3 - A_{2,3}^3 \omega) - \partial_1 u^3 (- A_{2,3}^3 \partial_3 u^3 + A_{1,3}^3 \omega) \\
  & = \partial_3 u^3 \omega + \mathcal{A}(\omega, u^3) + \mathcal{B}(u^3, u^3),
\end{align*}
where we defined as shorthands the operators
\begin{align*}
 \mathcal{A}(\omega, u^3) & := - \partial_2 u^3  A_{2,3}^3 \omega - \partial_1 u^3  A_{1,3}^3 \omega \\
 \mathcal{B}(u^3, u^3)  & := - \partial_2 u^3 A_{1,3}^3 \partial_3 u^3 + \partial_1 u^3  A_{2,3}^3 \partial_3 u^3.
\end{align*}
Notice that the div-curl decomposition forces the appearance of weakly anisotropic operators acting either on $\omega$ or $u^3$.
Assume from now on that the condition
$$
 \text{supp }\chi \subset \{\varphi \equiv 1 \}.
$$
holds.
Under this condition, the equation on $\omega'$ then reads
\begin{align*}
 \partial_t \omega' + \nabla \cdot (\omega' u) - \Delta \omega' & = 
 \chi \omega \partial_3 u^3 + \chi \mathcal{A}(\omega, u^3) + \chi \mathcal{B}(u^3, u^3) + \omega \partial_t \chi \\
 & = \omega' \partial_3 u^3 + \mathcal{A}(\omega', \varphi u^3) + \mathcal{B}(\chi u^3, \varphi u^3) + \omega \partial_t \chi,
\end{align*}
because the cutoffs $\chi$ and $\varphi$ act only on time.

It follows from the assumptions on $u^3$ that $\mathcal{B}(\chi u^3, \varphi u^3)$ belongs to $L^1L^{\frac 32}$.
Moreover, $\omega \partial_t \chi$ also belongs to $L^1L^{\frac 32}$.

By Lemma \ref{GainRegulariteAnisotrope}, $\omega'$ is actually in $L^{\infty}L^{\frac 32} \cap L^2W^{1, \frac 32}$.

Let us now write the system of equations satisfied by the other components of the vorticity, which we respectively denote by $\omega_1$ and $\omega_2$.
We have 

$$
 \left\{
 \begin{array}{c c}
  \partial_t \omega_1 + \nabla \cdot (\omega_1 u) - \Delta \omega_1 = \partial_3 u^1 \partial_1 u^2 - \partial_2 u^1 \partial_1 u^3 \\
  \partial_t \omega_2 + \nabla \cdot (\omega_2 u) - \Delta \omega_2 = \partial_1 u^2 \partial_2 u^3 - \partial_3 u^2 \partial_2 u^1 .
 \end{array}
\right.
$$
We now perform a div-curl decomposition of $u^1$ with respect to the second variable. That is, we write that
$$
 u^1 = \partial_3 \Delta_{(1,3)}^{-1} \omega_2 - \partial_1 \Delta_{(1,3)}^{-1} \partial_2 u^2.
$$
In turn, we have
\begin{align*}
 \partial_3 u^1 & = \partial_3^2 \Delta_{(1,3)}^{-1} \omega_2 - \partial_3 \partial_1 \Delta_{(1,3)}^{-1} \partial_2 u^2 \\
 & = A_{3,3}^2 \omega_2 - A_{1,3}^2 \partial_2 u^2.
\end{align*}
What we wish to emphasize is that $\partial_3 u^1$ may be expressed as an order zero \emph{isotropic} Fourier multiplier applied to $\omega_2$ and $\partial_2 u^2$.
The same reasoning applies to $\partial_3 u^2$, which may decomposed in terms of $\omega_1$ et $\partial_1 u^1$.
The fact that there is no (weakly) anisotropic operator here is a great simplification compared to the study of $\omega_3$, for which such a complication was unavoidable.
The system on $(\omega_1, \omega_2)$ may be recast in the following form : 
$$
 \left\{
 \begin{array}{c c}
  \partial_t \omega_1 + \nabla \cdot (\omega_1 u) - \Delta \omega_1 
  = (A_{3,3}^2 \omega_2 - A_{1,3}^2 \partial_2 u^2) \partial_1 u^2 - \partial_2 u^1 \partial_1 u^3 \\
  \partial_t \omega_2 + \nabla \cdot (\omega_2 u) - \Delta \omega_2 
  = \partial_1 u^2 \partial_2 u^3 + (A_{3,3}^1\omega_1 + A_{2,3}^1\partial_1 u^1) \partial_2 u^1 .
 \end{array}
\right.
$$
Informally, the above system behaves roughly like its simplified version
$$
 \left\{
 \begin{array}{c c}
  \partial_t \omega_1 + \nabla \cdot (\omega_1 u) - \Delta \omega_1 = (\omega_2 - \partial_2 u^2) \partial_1 u^2 - \partial_2 u^1 \partial_1 u^3 \\
  \partial_t \omega_2 + \nabla \cdot (\omega_2 u) - \Delta \omega_2 = \partial_1 u^2 \partial_2 u^3 + (\omega_1 + \partial_1 u^1) \partial_2 u^1,
 \end{array}
\right.
$$
which is much simpler to understand and shall make the upcoming computations clearer.
Let us denote, as we did for $\omega = \omega_3$, $\omega_1' := \chi \omega_1$ and $\omega_2' := \chi \omega_2$.
Applying the time cutoff $\chi$ to the system on $(\omega_1, \omega_2)$, we get
$$
 \left\{
 \begin{array}{c c}
  \partial_t \omega_1' + \nabla \cdot (\omega_1' u) - \Delta \omega_1' 
  = \varphi \partial_1 u^2 A_{3,3}^2 \omega_2' - \varphi \partial_1 u^2 A_{1,3}^2 (\chi \partial_2 u^2) - (\chi \partial_2 u^1) (\varphi \partial_1 u^3) + \omega_1 \partial_t \chi \\
  \partial_t \omega_2' + \nabla \cdot (\omega_2' u) - \Delta \omega_2' 
  = \varphi \partial_2 u^1 A_{3,3}^1\omega_1' + \varphi \partial_2 u^1 A_{2,3}^1(\chi\partial_1 u^1) +  (\chi \partial_1 u^2) (\varphi \partial_2 u^3) + \omega_2 \partial_t \chi.
 \end{array}
\right.
$$

Finally, applying the same decomposition to $u^1$ and $u^2$, we have four equations of the type
$$
 \partial_1 u^1 = - A_{1,1}^3 \omega_3 - A_{1,2}^3 \partial_3 u^3,
$$
which allow us to control, for $1 \leq i,j \leq 2$, $\partial_i u^j$ in $L^{\infty}L^{\frac 32} \cap L^2W^{1, \frac 65}$ in terms of $\omega_3$ and $\partial_3 u^3$ in the same space.
Thus, what we have gained through the regularity enhancement on $\omega_3$ is the control of four components of the jacobian of $u$, in addition to the three provided by the assumption on $u^3$.
For this reason, the system we have on $(\omega_1, \omega_2)$ may be viewed as an affine and isotropic one with all exterior forces in scaling invariant spaces.
For instance, $\varphi \partial_2 u^1$ belongs to $L^2L^3$, while the exterior forces lie in $L^1L^{\frac 32}$.
Lemma \ref{GainRegulariteSysteme} now implies that both $\omega_1'$ and $\omega_2'$ are in $L^{\infty} L^{\frac 32} \cap L^2 W^{1, \frac 32}$.

We now have proven that the whole vorticity $\Omega$ belongs to $L^4L^2$ by Sobolev embeddings.
In turn, it implies that the whole velocity field belongs to $L^4H^1$. 
The main theorem then follows from the application of the usual Serrin criterion.

\section{Local case in $\mathbb{R}^3$.}

We state the second main theorem of this paper.

\begin{theorem}
 Let $u$ be a Leray solution of the Navier-Stokes equations set in $\mathbb{R}_+ \times \mathbb{R}^3$ 
$$
  \left\{ 
  \begin{array}{c c}
   \partial_t u + \nabla \cdot (u \otimes u) - \Delta u = - \nabla p \\
   u(0) = u_0
  \end{array}
  \right.
$$
 with initial data $u_0$ in $L^2(\mathbb{R}^3) \cap L^{\frac 32}(\mathbb{R}^3)$.
 Assume the existence of a time interval $]T_1,T_2[$ and a spatial domain $D \Subset \mathbb{R}^3$ of compact closure such that its third component $u^3$ satisfies
$$
  u^3 \in L^2(]T_1,T_2[, W^{2,\frac 32}(D)).
$$
Then, on $]T_1,T_2[ \times D$, $u$ is actually smooth in time and space and satisfies the Navier-Stokes equations in the classical, strong sense.
\end{theorem}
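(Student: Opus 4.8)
The plan is to follow the architecture of the torus proof step by step, the only genuinely new feature being that the hypothesis on $u^3$ is now local in space, which forces a spatial localization on top of the temporal one. I would therefore keep the time cutoffs $\chi,\varphi$ of the torus argument (with $\operatorname{supp}\chi\subset\{\varphi\equiv1\}\Subset\,]T_1,T_2[$) and add two spatial cutoffs $\theta,\psi\in\mathcal C_c^\infty(D)$ with $\operatorname{supp}\theta\subset\{\psi\equiv1\}$, studying $\omega':=\chi\theta\,\omega_3$ and, later, its analogues $\omega_1',\omega_2'$. The extra hypothesis $u_0\in L^{\frac32}\cap L^2$ feeds into Lemma \ref{IntegrabiliteSolutionLeray}, which gives $u\in L^\infty L^{\frac32}\cap L^2W^{1,\frac32}$ globally; this global integrability is precisely what substitutes, on $\mathbb R^3$, for the ordering $L^2\hookrightarrow L^{\frac65}$ of Lebesgue spaces that came for free on the compact torus.

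Writing the equation for $\omega'$, the time cutoff contributes the harmless term $\theta\omega_3\,\partial_t\chi$ as before, while the spatial cutoff generates, using $\operatorname{div}u=0$, the commutators
\begin{equation*}
\chi\,\omega_3\,(u\cdot\nabla\theta)\;-\;2\chi\,\nabla\theta\cdot\nabla\omega_3\;-\;\chi\,\omega_3\,\Delta\theta .
\end{equation*}
The first and third lie in $L^1L^{\frac65}$ on the compact support of $\nabla\theta$ (using $\omega_3\in L^2L^2$ and $u\in L^2L^6$), and the localized forcing $\chi\theta\,(\Omega\cdot\nabla u^3)$ is in $L^1L^{\frac65}$ by assumption. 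The only delicate term is $\nabla\theta\cdot\nabla\omega_3$, which involves a derivative of $\omega_3$ that we do not control. I would handle it by writing it in divergence form, $\nabla\theta\cdot\nabla\omega_3=\nabla\cdot(\omega_3\nabla\theta)-\omega_3\Delta\theta$, and splitting $\omega'=\omega'_{\flat}+\omega'_{\sharp}$, where $\omega'_{\flat}$ carries the source $-2\chi\,\nabla\cdot(\omega_3\nabla\theta)$ and $\omega'_{\sharp}$ the remaining $L^1L^{\frac65}$ part. Since $\omega_3\nabla\theta\in L^2L^2$ is compactly supported, a standard $L^2$ energy estimate puts $\omega'_{\flat}$ in $L^\infty L^2\cap L^2H^1$, and the decay of the gradient of the heat kernel away from the support of the datum (in the spirit of Lemma \ref{DecroissanceChaleur}) yields enough spatial decay to place $\omega'_{\flat}$ in $L^\infty L^{\frac65}\cap L^2W^{1,\frac65}$ as well; Lemma \ref{GainRegulariteLp} applied to $\omega'_{\sharp}$ and uniqueness in $L^2L^2$ (Lemma \ref{UniciteL2L2}) then give $\omega'\in L^\infty L^{\frac65}\cap L^2W^{1,\frac65}$.

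With this first gain I would reproduce the anisotropic div-curl decomposition. The subtlety is that the operators $A^3_{i,3}=\partial_i\partial_3\Delta_h^{-1}$ are nonlocal and no longer commute with the cutoffs, so expanding $\chi\theta\,(\Omega\cdot\nabla u^3)$ produces, besides the expected weakly anisotropic terms $(\partial_j u^3)\,A^3_{i,3}\omega_3$ and the quadratic $\mathcal B(u^3,u^3)$ terms that lie in $L^1L^{\frac32}$ exactly as on the torus, extra commutator contributions $[\theta,A^3_{i,3}]\,\omega_3$ and $[\theta,A^3_{i,3}]\,\partial_3u^3$. These commutators are of lower order, hence better behaved than the operators they come from, so that—modulo the anisotropic Calder\'on--Zygmund commutator calculus and the spatial-decay estimates of Lemmas \ref{DecroissanceChaleur}--\ref{InjectionSobolevCompacte}—they can be absorbed into the $L^1L^{\frac32}$ source framework. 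Lemma \ref{GainRegulariteAnisotrope}, together with Lemma \ref{GainRegulariteAnisotropeBis} where two distinct forcings appear, then upgrades $\omega'$ to $L^\infty L^{\frac32}\cap L^2W^{1,\frac32}$.

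Finally I would treat $\omega_1',\omega_2'$ as on the torus, using the \emph{adaptive} div-curl decomposition (with respect to the second and first variables) so that only the \emph{isotropic} zero-order operators $A^2_{i,j},A^1_{i,j}$ appear, up to the same kind of smoothing cutoff commutators. The resulting system is affine and isotropic, with coefficients in $L^2L^3$ and sources in $L^1L^{\frac32}$, so Lemma \ref{GainRegulariteSysteme} (or \ref{GainRegulariteSystemeBis}) places $\omega_1',\omega_2'$ in $L^\infty L^{\frac32}\cap L^2W^{1,\frac32}$. Hence the full vorticity $\Omega$ lies in $L^4L^2$ over $]T_1,T_2[\times D$ by Sobolev embedding, whence $u\in L^4H^1$ locally, and the classical Serrin criterion gives smoothness on $]T_1,T_2[\times D$. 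I expect the main obstacle to be precisely the spatial localization: reconciling the nonlocal inverse-Laplacian and anisotropic operators with compactly supported cutoffs, and in particular absorbing the genuine derivative loss carried by $\nabla\theta\cdot\nabla\omega_3$, which is exactly where the divergence-form trick, the spatial-decay lemmas, and the new hypothesis $u_0\in L^{\frac32}$ become indispensable.
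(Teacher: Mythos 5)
Your proposal follows essentially the same route as the paper's own proof: space--time localization of $\omega_3$, a splitting of the truncated vorticity into a piece driven by the integrable ($L^1L^{\frac65}$) forcing and a piece carrying the divergence-form cutoff term, recombination of the two via the $L^2L^2$ uniqueness Lemma~\ref{UniciteL2L2}, cutoff--multiplier commutators estimated and absorbed as $L^1L^{\frac32}$ remainders (the paper does this through Lemma~\ref{Commutateur}, proved by a Calder\'on--Zygmund kernel argument and Hardy--Littlewood--Sobolev), the regularity-gain Lemmas~\ref{GainRegulariteAnisotropeBis} and~\ref{GainRegulariteSystemeBis}, and finally the classical Serrin criterion; you also correctly identify where $u_0\in L^{\frac32}$ enters, via Lemma~\ref{IntegrabiliteSolutionLeray}. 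The one step whose justification would fail as written is your use of heat-kernel decay (Lemma~\ref{DecroissanceChaleur}) to place $\omega'_{\flat}$ itself in $L^\infty L^{\frac65}\cap L^2W^{1,\frac65}$: $\omega'_{\flat}$ solves a transport--diffusion equation, not a pure heat equation, and neither the transport term $\nabla\cdot(\omega'_{\flat}u)$ nor the divergence-form source is a compactly supported $L^1L^1$ datum, so that lemma does not apply. The flaw is harmless because the decay is unnecessary: as in the paper, only the sum $\omega'=\omega'_{\flat}+\omega'_{\sharp}$ matters, and since $\omega'$ has compact support, the local embeddings $L^2\hookrightarrow L^{\frac65}_{loc}$ and $H^1\hookrightarrow W^{1,\frac65}_{loc}$ already yield the global $L^\infty L^{\frac65}\cap L^2W^{1,\frac65}$ membership. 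One point you leave implicit and should state: because the first gain holds for \emph{every} cutoff supported in $]T_1,T_2[\times D$, one obtains $\omega_3\in L^2_{loc}W^{1,\frac65}_{loc}$ on all of $]T_1,T_2[\times D$, and it is exactly this bootstrap (with nested cutoffs) that upgrades the derivative-loss term $\nabla\theta\cdot\nabla\omega_3$ from a mere $L^2H^{-1}$ distribution to an $L^{\frac43}L^{\frac65}$ force, i.e.\ the $F_2$ slot of Lemma~\ref{GainRegulariteAnisotropeBis}, at the second stage of the argument.
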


Let us describe in a few words our strategy for this case.
Compared to the torus, there are two main differences to notice.
First, since the assumption on $u^3$ was made on the whole space, the cutoffs acted only in time. 
The difference between the original Navier-Stokes equation and its truncated version was thus only visible in one term, rendering our strategy easier to apply.
On the other hand, since the torus has finite measure, the Lebesgue spaces form a decreasing family of spaces.
This fact allowed us to lose some integrability when we wanted to embed different forcing terms in the same space.
This last difference will become visible when dealing with commutators between Fourier multipliers and the cutoff functions, thus lengthening a little bit the proof, compared to the torus case.
For that technical reason, we added an assumption on the initial data which was trivially true in the torus case, thanks to the aforementioned embedding of Lebesgue spaces.

Let $\chi, \varphi$ be smooths cutoffs in space and time, localised inside $]T_1,T_2[ \times D$.
Let $\omega$ be the third component of $\Omega := \text{rot } v$.
Denote $\chi \omega$ by $\omega'$.
The equation satisfied by $\omega'$ writes
$$
 \partial_t  \omega' + \nabla \cdot (\omega' u) - \Delta \omega' = \chi \Omega \cdot \nabla u^3 + \mathcal{C}(\omega, \chi),
$$
where $\mathcal{C}(\omega,\chi)$ stands for all the cutoff terms. Namely, we have
$$
 \mathcal{C}(\omega, \chi) := \omega \partial_t \chi + \omega u \cdot \nabla \chi - \omega \Delta \chi - 2 \nabla \omega \cdot \nabla \chi.
$$
As $\chi$ is smooth and has compact support, we claim that $\mathcal{C}(\omega, \chi)$ belongs to $L^1L^{\frac 32} + L^2H^{-1}$.
Because $\chi$ has compact support in space, the terms in $L^1L^{\frac 32}$ also lie in $L^1L^{\frac 65}$.
Finally, the quantity $\chi \Omega \nabla u^3$ clearly belongs to $L^1L^{\frac 65}$.
Let now $\omega'_{(1)}$ be the unique solution in $L^{\infty}L^{\frac 65} \cap L^2W^{1,\frac 65}$ of the equation 
$$
 \partial_t  \omega_{(1)}' + \nabla \cdot (\omega_{(1)}' u) - \Delta \omega_{(1)}' = \chi \Omega \cdot \nabla u^3 + \omega \partial_t \chi + \omega u \cdot \nabla \chi - \omega \Delta \chi
$$
with the initial condition $\omega_{(1)}'(0) = 0$, which exists thanks to Lemma \ref{EstimeeEnergieLp} and is unique thanks to Lemma \ref{UniciteL2L2}.
Similarly, let $\omega_{(2)}'$ be the unique solution in $L^{\infty}L^2 \cap L^2H^1$ of 
$$
 \partial_t  \omega_{(2)}' + \nabla \cdot (\omega_{(2)}' u) - \Delta \omega_{(2)}' = - 2 \nabla \omega \cdot \nabla \chi.
$$
with the initial condition $\omega_{(2)}'(0) = 0$.
Let $\tilde{\omega}' := \omega'_{(1)} + \omega'_{(2)} - \omega'$. 
From the regularity we have on each term, $\tilde{\omega}'$ belongs to $L^2_{loc}L^2$ and satisfies
$$
\partial_t  \tilde{\omega}' + \nabla \cdot (\tilde{\omega}' u) - \Delta \tilde{\omega}' = 0
$$
along with the initial condition $\tilde{\omega}'(0) = 0$.
Lemma \ref{UniciteL2L2} then implies that $\tilde{\omega}' \equiv 0$, from which it follows that
$$
 \omega' = \omega_{(1)}' + \omega_{(2)}'
$$
By local embeddings of Lebesgue spaces, $\omega_{(2)}'$ also belongs to $L^{\infty}L^{\frac 65}_{loc} \cap L^2W^{1,\frac 65}_{loc}$.
On the other hand, it is rather trivial that $\omega_{(1)}'$ also belongs to $L^{\infty}L^{\frac 65}_{loc} \cap L^2W^{1,\frac 65}_{loc}$.
Now, since $\omega'$ has compact support in space, it follows that $\omega'$ belongs to the full space $L^{\infty}L^{\frac 65} \cap L^2W^{1,\frac 65}$.
In particular, the forcing term $\nabla \omega \cdot \nabla \chi$ is now an integrable vector field, instead of a mere $L^2H^{-1}$ distribution.
At this stage, because the reasoning is valid for \textit{any} cutoff $\chi$ supported in $]T_1,T_2[ \times D$, we have proved that the third component $\omega$ of the vorticity of $u$ has the regularity
$$
 \omega \in L^{\infty}_{loc}(]T_1, T_2[, L^{\frac 65}_{loc}(D)) \cap L^2_{loc}(]T_1,T_2[, W^{1,\frac 65}_{loc}(D)).
$$
In particular, such a statement allows us to improve the regularity of $\mathcal{C}(\omega, \chi)$ to $L^1L^{\frac 32} + L^2L^{\frac 65}$.
Such a gain will be of utmost importance near the end of the proof.
Expanding again the product $\Omega \cdot \nabla u^3$ in terms of $\omega$ and $u^3$ only, we have
$$
 \partial_t \omega' + \nabla \cdot (\omega' u) - \Delta \omega'  = 
 \chi \omega \partial_3 u^3 + \chi \mathcal{A}(\omega, u^3) + \chi \mathcal{B}(u^3, u^3) + \mathcal{C}(\omega, \chi).
$$
From now on, we enforce the condition
$$
 \text{supp }\chi \subset \{\varphi \equiv 1 \}.
$$
Now, because the cutoff $\chi$ acts both in space and time, we have to carefully compute the associated commutators with the operators $\mathcal{A}$ and $\mathcal{B}$.
First, let us notice that $\mathcal{A}$ is local in its variable $u^3$, which allows us to write that 
$$
 \chi \mathcal{A}(\omega, u^3) = \chi \mathcal{A}(\omega, \varphi u^3).
$$
On the other hand, for $i = 1,2$, 
\begin{align*}
 \chi A^3_{i,3} \omega & = \chi \partial_i \Delta_{(1,2)}^{-1} (\partial_3 \omega) \\
 & = [\chi, \partial_i \Delta_{(1,2)}^{-1}](\partial_3 \omega) + \partial_i \Delta_{(1,2)}^{-1}(\chi \partial_3 \omega) \\
 & = [\chi, \partial_i \Delta_{(1,2)}^{-1}](\partial_3 \omega) + A^3_{i,3}(\chi \omega) - \partial_i \Delta_{(1,2)}^{-1} (\omega \partial_3 \chi)
\end{align*}
We now estimate the two remainder terms in $L^1L^{\frac 32}$. By Sobolev embeddings in $\mathbb{R}^2$, we have, for $t > 0$ and $x_3 \in \mathbb{R}$,
$$
 \|\left(\partial_i \Delta_{(1,2)}^{-1} (\omega \partial_3 \chi) \right)(t,\cdot,x_3)\|_{L^6(\mathbb{R}^2)} 
 \lesssim \|(\omega \partial_3 \chi) (t,\cdot,x_3)\|_{L^{\frac 32}(\mathbb{R}^2)}.
$$
Thus,
$$
  \|\partial_i \Delta_{(1,2)}^{-1} (\omega \partial_3 \chi)\|_{L^2L^{\frac 32}L^6} \lesssim \|\omega \partial_3 \chi\|_{L^2L^{\frac 32}} \lesssim \|\omega\|_{L^2L^2} \|\nabla \chi\|_{L^{\infty}L^6}.
$$
The commutator is a little bit trickier. First, we write
$$
 \partial_3 \omega = \partial_3 (\partial_1 u^2 - \partial_2 u^1) = \partial_1 (\partial_3 u^2) - \partial_2 (\partial_3 u^1).
$$
In order to continue the proof, we need a commutator lemma, which we state and prove below for the sake of completeness, despite its ordinary nature.
\begin{lemma}
 Let $f$ be in $L^{\frac 32}(\mathbb{R}^2)$ and $\chi$ be a test function. The following commutator estimates hold :
$$
  \|[\chi, \nabla \Delta^{-1}](\nabla f)\|_{L^6(\mathbb{R}^2)} \lesssim \|\nabla \chi\|_{L^{\infty}(\mathbb{R}^2)} \|f\|_{L^{\frac 32}(\mathbb{R}^2)}
$$
 and
$$
  \|[\chi, \nabla^2 \Delta^{-1}](f)\|_{L^6(\mathbb{R}^2)} \lesssim \|\nabla \chi\|_{L^{\infty}(\mathbb{R}^2)} \|f\|_{L^{\frac 32}(\mathbb{R}^2)}.
$$
 \label{Commutateur}
\end{lemma}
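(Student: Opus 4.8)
The plan is to deduce both inequalities from a single $L^{3/2}\to L^6$ bound for the commutator of the zero-order operator $\nabla^2\Delta^{-1}$ with $\chi$, exploiting that the second-order and first-order statements differ only by an operator of order $-1$. First I would record the algebraic identity
$$[\chi,\nabla\Delta^{-1}](\nabla f) = [\chi,\nabla^2\Delta^{-1}]f + \nabla\Delta^{-1}\bigl((\nabla\chi)\, f\bigr),$$
which follows from $\chi\nabla f = \nabla(\chi f) - (\nabla\chi)f$ together with $\nabla\Delta^{-1}\nabla = \nabla^2\Delta^{-1}$. Since $\nabla\Delta^{-1}$ is a convolution operator whose kernel is homogeneous of degree $-1$ in $\mathbb{R}^2$, the Hardy--Littlewood--Sobolev inequality gives $\nabla\Delta^{-1}\colon L^{3/2}(\mathbb{R}^2)\to L^6(\mathbb{R}^2)$, the exponents matching because $\tfrac16 = \tfrac23 - \tfrac12$; hence the last term is bounded by $\|\nabla\chi\|_{L^\infty}\|f\|_{L^{3/2}}$. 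The two desired estimates are therefore equivalent up to this harmless term, and it suffices to prove the second one.

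For the core bound I would use the kernel representation of the Calder\'on--Zygmund operator $T := \nabla^2\Delta^{-1}$. Writing $T g = c\,g + \mathrm{p.v.}\,K\ast g$ with $K$ smooth away from the origin, homogeneous of degree $-2$, and of vanishing mean on the unit circle, the local (identity) part cancels in the commutator, so that
$$[\chi,T]f(x) = \int_{\mathbb{R}^2} K(x-y)\bigl(\chi(x)-\chi(y)\bigr) f(y)\, dy.$$
The key gain is that the factor $\chi(x)-\chi(y)$ vanishes to first order, $|\chi(x)-\chi(y)| \le \|\nabla\chi\|_{L^\infty}|x-y|$, which combined with $|K(x-y)|\lesssim |x-y|^{-2}$ turns the singular kernel into a weakly singular one, $|K(x-y)(\chi(x)-\chi(y))| \lesssim \|\nabla\chi\|_{L^\infty}|x-y|^{-1}$, locally integrable in dimension two and making the principal value superfluous. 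I would then dominate pointwise
$$\bigl|[\chi,T]f(x)\bigr| \lesssim \|\nabla\chi\|_{L^\infty}\int_{\mathbb{R}^2} \frac{|f(y)|}{|x-y|}\,dy = \|\nabla\chi\|_{L^\infty}\,(I_1|f|)(x),$$
where $I_1$ is the Riesz potential of order one, whose kernel in $\mathbb{R}^2$ is precisely $|x|^{-1}$. A final application of Hardy--Littlewood--Sobolev, again with $\tfrac16 = \tfrac23 - \tfrac12$, yields $\|[\chi,T]f\|_{L^6}\lesssim \|\nabla\chi\|_{L^\infty}\|f\|_{L^{3/2}}$, which is exactly the second claimed inequality.

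The only delicate point is the passage from the operator $T$ to its commutator kernel: one must justify that the distributional principal-value part of $\nabla^2\Delta^{-1}$ produces precisely the stated integral, and in particular that the singular term and the multiple-of-identity term either cancel or become absolutely convergent once multiplied by $\chi(x)-\chi(y)$. This is standard for smooth compactly supported $\chi$, since the first-order vanishing of $\chi(x)-\chi(y)$ exactly compensates one power of the degree-$(-2)$ homogeneity; everything else is bookkeeping of exponents, which the identity $\tfrac16 = \tfrac23-\tfrac12$ keeps consistent at each step.
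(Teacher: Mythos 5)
Your proposal is correct and takes essentially the same route as the paper: the same algebraic identity $[\chi,\nabla\Delta^{-1}](\nabla f) = [\chi,\nabla^2\Delta^{-1}]f + \nabla\Delta^{-1}(f\nabla\chi)$ reduces the first estimate to the second, and the second is then obtained from the pointwise bound $|K(x-y)(\chi(x)-\chi(y))|\lesssim \|\nabla\chi\|_{L^{\infty}}|x-y|^{-1}$ together with the Hardy--Littlewood--Sobolev inequality with $\tfrac16=\tfrac23-\tfrac12$. The only difference is cosmetic: the paper writes the kernel of $\nabla^2\Delta^{-1}$ in $\mathbb{R}^2$ explicitly as $c_1\frac{(x-y)\otimes(x-y)}{|x-y|^4}+\frac{c_2}{|x-y|^2}I_2$, whereas you invoke its Calder\'on--Zygmund representation abstractly, noting (correctly) that the multiple-of-identity part drops out of the commutator.
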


\begin{proof}
We notice that the first estimate may be deduced from the second thanks to the identity
$$[\chi, \nabla \Delta^{-1}](\nabla f) = [\chi, \nabla^2 \Delta^{-1}](f) + \nabla\Delta^{-1} (f \nabla \chi). $$
Since the operator $\nabla \Delta^{-1}$ is continous from $L^{\frac 32}(\mathbb{R}^2)$ to $L^6(\mathbb{R}^2)$, we get
$$\|\nabla\Delta^{-1} (f \nabla \chi)\|_{L^6(\mathbb{R}^2)} \lesssim \|f\nabla \chi\|_{L^{\frac 32}(\mathbb{R}^2)} \lesssim
\|f\|_{L^{\frac 32}(\mathbb{R}^2)} \|\nabla \chi\|_{L^{\infty}(\mathbb{R}^2)}$$
It only remains to study the second commutator, which we denote by $C_{\chi}$.
There exist numerical constants $c_1,c_2$ such that, for almost every $x \in \mathbb{R}^2$,
$$C_{\chi}(x) = \int_{\mathbb{R}^2} \left(c_1 \frac{(x-y)\otimes (x-y)}{|x-y|^4} +  \frac{c_2}{|x-y|^2}I_2 \right) (\chi(x) - \chi(y)) f(y) dy.$$
This yields
$$|C_{\chi}(x)| \lesssim \|\nabla \chi\|_{L^{\infty}(\mathbb{R}^2)} \int_{\mathbb{R}^2} \frac{|f(y)|}{|x-y|} dy = \|\nabla \chi\|_{L^{\infty}(\mathbb{R}^2)} (|f| \ast |\cdot|^{-1})(x). $$
Applying the Hardy-Littlewood-Sobolev inequality to $f$, we get
$$\|C_{\chi}\|_{L^6(\mathbb{R}^2)} \lesssim \|\nabla \chi\|_{L^{\infty}(\mathbb{R}^2)} \|f\|_{L^{\frac 32}(\mathbb{R}^2)} $$
as we wanted.
\end{proof}
Thanks to Lemma \ref{Commutateur}, we have the estimate
$$
 \| [\chi, \partial_i \Delta_{(1,2)}^{-1}](\partial_1 (\partial_3 u^2) ) \|_{L^6(\mathbb{R}^2)} \lesssim \|\nabla \chi\|_{L^{\infty}} \|\partial_3 u^2\|_{L^{\frac 32}(\mathbb{R}^2)},
$$
which translates into
$$
 \| [\chi, \partial_i \Delta_{(1,2)}^{-1}](\partial_1 (\partial_3 u^2) ) \|_{L^2L^{\frac 32}L^6} \lesssim \|\nabla \chi\|_{L^{\infty}} \|\partial_3 u^2\|_{L^2L^{\frac 32}}.
$$
From Lemma \ref{IntegrabiliteSolutionLeray} applied to $u$, we deduce that, in particular, $\partial_3 u^2$ belongs to $L^2L^{\frac 32}$.
Moreover, we may bound $\|\partial_3 u^2\|_{L^2L^{\frac 32}}$ by a quantity depending only on $u_0$ through its $L^2$ and $L^{\frac 32}$ norms. 
Gathering these estimates, we may write
$$
 \chi \mathcal{A}(\omega, \varphi u^3) = \mathcal{A}(\chi \omega, \varphi u^3) + \mathcal{R}(\mathcal{A}),
$$
with the remainder $\mathcal{R}(\mathcal{A})$ bounded in $L^1L^{\frac 32}$ only in terms of the initial data $u_0$, the cutoff $\chi$ and $u^3$.
In particular, it may be regarded as an exterior force independant of $\omega'$ in the sequel and scaling invariant.
The same reasoning applies to $\mathcal{B}$ : we have
$$
 \chi \mathcal{B}(u^3, \varphi u^3) = \mathcal{B}(\chi u^3, \varphi u^3) + \mathcal{R}(\mathcal{B}),
$$
with $\mathcal{R}(\mathcal{B})$ bounded in $L^1L^{\frac 32}$ only in terms of $\chi$ and $u^3$.
Finally, the equation on $\omega'$ has been rewritten as
$$
\partial_t \omega' + \nabla \cdot (\omega' u) - \Delta \omega'  = 
 \omega' \partial_3 u^3 +  \mathcal{A}(\omega', \varphi u^3) +  \mathcal{B}(\chi u^3, \varphi u^3) + \mathcal{C}(\omega, \chi) + \mathcal{R}(\mathcal{A}) + \mathcal{R}(\mathcal{B}).
$$
Applying Lemma \ref{GainRegulariteAnisotropeBis}, we deduce that the truncated vorticity $\omega'$ is actually in $L^{\infty}L^{\frac 32} \cap L^2W^{1, \frac 32}$.
Again, thanks to the div-curl decomposition, it follows that space-time truncations of $\partial_i u^j$ are controlled in the same space in terms of $\omega'$ and $u^3$, for $1 \leq i,j \leq 2$.
We now turn to the other components of the vorticity, namely $\omega_1$ and $\omega_2$. Truncating the equations and using the div-curl decomposition, we have
$$
 \left\{
 \begin{array}{c c}
  \partial_t \omega_1' + \nabla \cdot (\omega_1' u) - \Delta \omega_1' 
  = \chi (A_{3,3}^2 \omega_2 - A_{1,3}^2 \partial_2 u^2) \partial_1 u^2 - \chi \partial_2 u^1 \partial_1 u^3 + \mathcal{C}(\omega_1, \chi) \\
  \partial_t \omega_2' + \nabla \cdot (\omega_2' u) - \Delta \omega_2' 
  = \chi \partial_1 u^2 \partial_2 u^3 + \chi (A_{3,3}^1\omega_1 + A_{2,3}^1\partial_1 u^1) \partial_2 u^1 + \mathcal{C}(\omega_2, \chi).
 \end{array}
\right.
$$
Let us now write and estimate the necessary commutators.
By Lemma \ref{Commutateur}, we have, when $k$ is neither $i$ nor $j$,
$$
 \|[\chi, A^k_{i,j}](\omega_2)\|_{L^6(\mathbb{R}^2)} \lesssim \| \nabla \chi \|_{L^{\infty}} \|\omega_2\|_{L^{\frac 32}(\mathbb{R}^2)}.
$$
Thus,
$$
 \|[\chi, A^k_{i,j}](\omega_2)\|_{L^2L^{\frac 32}L^6} \lesssim \|\nabla \chi\|_{L^{\infty}} \|\omega_2\|_{L^2L^{\frac 32}}.
$$
On the other hand, by a trace theorem, we have, for $a$ in $W^{1, \frac 32}(\mathbb{R}^3)$,
$$
\|a\|_{L^{\infty}(\mathbb{R}, L^2(\mathbb{R}^2))} \lesssim \|a\|_{W^{1,\frac 32}(\mathbb{R}^3)}.
$$
These two estimates together entail that, for $1 \leq i,j \leq 2$,
$$
 \|\partial_i (\varphi u^j) [\chi, A^k_{i,j}](\omega_2)\|_{L^1L^{\frac 32}} \lesssim \|\nabla \chi\|_{L^{\infty}} \|\omega_2\|_{L^2L^{\frac 32}} \|\partial_i (\varphi u^j)\|_{W^{1,\frac 32}}.
$$
The system on $(\omega_1', \omega_2')$ may be recast as
$$
 \left\{
 \begin{array}{c c}
  \partial_t \omega_1' + \nabla \cdot (\omega_1' u) - \Delta \omega_1' 
  =  (A_{3,3}^2 \omega_2' - A_{1,3}^2  \partial_2 (\chi u^2)) \partial_1 (\varphi u^2) - \partial_2 (\chi u^1) \partial_1 (\varphi u^3) + \mathcal{C}(\omega_1, \chi) + \mathcal{R}^1 \\
  \partial_t \omega_2' + \nabla \cdot (\omega_2' u) - \Delta \omega_2' 
  =  \partial_1 (\chi u^2) \partial_2 (\varphi u^3) +  (A_{3,3}^1 \omega_1' + A_{2,3}^1  \partial_1 (\chi u^1)) \partial_2 (\varphi u^1) + \mathcal{C}(\omega_2, \chi) + \mathcal{R}^2,
 \end{array}
\right.
$$
where the remainders $\mathcal{R}^{1,2}$ contain, among other terms, the commutators we just estimated. 
The important fact is the boundedness of $\mathcal{R}^{1,2}$ in $L^1L^{\frac 32}$.
Because $\chi$ has compact support in time, the term $-2 \nabla \omega \cdot \nabla \chi$ is in $L^{\frac 43}L^{\frac 65}$.
Applying Lemma \ref{GainRegulariteSystemeBis}, it follows that both $\omega_1'$ and $\omega_2'$ belong to $L^{\infty}L^{\frac 32} \cap L^2W^{1,\frac 32}$.
The conclusion of the theorem now follows from the standard Serrin criterion.


\begin{thebibliography}{00}

\bibitem{AmbrosioBV}
L. Ambrosio, \textsl{Transport equation and Cauchy problem for BV vector fields}, Invent. math. 158, no.2, 227-260 (2004)
\bibitem{AmbrosioCrippa}
L. Ambrosio and G. Crippa, \textsl{Existence, uniqueness, stability and differentiability properties of the flow associated to weakly differentiable vector fields}, Lect. Notes Unione Mat. Ital., 5 (2008)
\bibitem{BeiraoDaVeiga}
H. Beir\~ao da Veiga, \textsl{A new regularity class for the Navier-Stokes equations in $\mathbb{R}^n$}, Chinese Annals Math. 16, 407-412 (1995)
\bibitem{CaffKohnNiren}
L. Caffarelli, R. Kohn and L. Nirenberg, \textsl{Partial regularity of suitable weak solutions of the Navier-Stokes equations}, Comm. Pure Appl. Math. 35, 771-831 (1982)
\bibitem{CheminZhang}
J.-Y. Chemin and P. Zhang, \textsl{On the critical one component regularity for the $3$D Navier-Stokes equations}, Ann. sci. de l'ENS 49, 1, 131-167 (2016)
\bibitem{Depauw}
N. Depauw, \textsl{Non unicit\'e des solutions born\'ees pour un champ de vecteurs $BV$ en dehors d'un hyperplan}, C. R. Math. Acad. Sci. Paris 337, no. 4, 249–252 (2003)
\bibitem{DiPerna-Lions} 
R.J. DiPerna and P.-L. Lions, \textsl{Ordinary differential equations, transport theory and Sobolev spaces}, Invent. math. 98, 511-547 (1989)
\bibitem{FabesJonesRiviere}
E. Fabes, B. Jones and N.M. Riviere, \textsl{The initial value problem for the Navier-Stokes equations with data in $L^p$}, Archive Rational Mechanics Analysis 45, 222-248 (1972)
\bibitem{FabreLebeau}
C. Fabre and G. Lebeau, \textsl{R\'egularit\'e et unicit\'e pour le probl\`eme de Stokes}, Comm. Part. Diff. Eq. 27, no. 3-4, 437-475 (2002)
\bibitem{Giga}
Y. Giga, \textsl{Solutions for semilinear parabolic equation in $L^p$ and regularity of weak solutions of Navier-Stokes equations}, J. Diff. Eq. 62, 186-212 (1986)
\bibitem{IskauSereginSverak}
L. Iskauriaza, G.A. Ser\"egin and V. \v{S}ver\'ak, \textsl{$L^{3,\infty}$ solutions of the Navier-Stokes equations and backward uniqueness}, Russ. Math. Surv. 58, no. 2, 211-250 (2003)
\bibitem{LeBrisLions}
C. Le Bris and P.-L. Lions, \textsl{Existence and uniqueness of solutions to Fokker-Planck type equations with irregular coefficients}, Comm. Part. Diff. Eq. 33, no. 7-9, 1272-1317 (2008)
\bibitem{Leray} 
J. Leray, \textsl{Sur le mouvement d'un liquide visqueux emplissant l'espace}, Acta Mathematica 63, 193-248 (1934)
\bibitem{Lerner}
N. Lerner, \textsl{Transport equations with partially $BV$ velocities}, Ann. Sc. Norm. Super. Pisa Cl. Sci. 3, no. 4, 681–703 (2004)
\bibitem{NoteAuCRAS}
G. L\'evy, \textsl{On uniqueness for a rough transport-diffusion equation}, C. R. Acad. Sci. Paris, Ser. I 354, 804–807 (2016)
\bibitem{CRASrefait}
G. L\'evy, \textsl{A uniqueness lemma with applications to regularization and fluid mechanics}, \textit{submitted to Comm. Cont. Math.} arXiv:1612.04138 [math.AP] (2016)
\bibitem{Serrin}
J. Serrin, \textsl{On the interior regularity of weak solutions of the Navier-Stokes equations}, Archive Rational Mechanics Analysis 9, no. 1, 187-195 (1962)
\bibitem{Struwe}
M. Struwe, \textsl{On partial regularity results for the Navier-Stokes equations}, Comm. Pure Appl. Math. 41, 437-458 (1988)
\bibitem{vonWahl}
W. von Wahl, \textsl{Regularity of weak solutions of the Navier-Stokes equations}, Proc. Symp. Pure Math. 45, 497-503 (1986)
\end{thebibliography}
\end{document}